\setlist[enumerate]{label=\rm{(\arabic*)}}
\setlist[enumerate,2]{label=\rm{(\roman*)}}
\setlist[itemize]{label=\raisebox{0.25ex}{\tiny$\bullet$}}
\renewcommand{\thefootnote}{\fnsymbol{footnote}}
     \let\oldfootnote\footnote
     \def\footnote{\@ifstar\footnote@star\footnote@nostar}
     \def\footnote@star#1{{\let\thefootnote\relax\footnotetext{#1}}}
     \def\footnote@nostar{\oldfootnote}
\newcommand{\N}{\mathbf{N}}
\newcommand{\red}{\mathrm{red}}
\newcommand*{\defeq}{\mathrel{\rlap{%
                      \raisebox{0.3ex}{$\cdot$}}%
                      \raisebox{-0.3ex}{$\cdot$}}%
                      =}
\newcommand{\proj}{\mathbf{P}}
\newcommand{\Gm}{\mathbf{G}_m}
\newcommand{\Ga}{\mathbf{G}_a}
\DeclareMathOperator{\Proj}{Proj}
\DeclareMathOperator{\Aut}{Aut}
\DeclareMathOperator{\Supp}{Supp}
\DeclareMathOperator{\height}{ht}
\DeclareMathOperator{\Lie}{Lie}
\DeclareMathOperator{\PSO}{PSO}
\DeclareMathOperator{\PGL}{PGL}
\DeclareMathOperator{\SO}{SO}
\DeclareMathOperator{\Sp}{Sp}
\DeclareMathOperator{\PSp}{PSp}
\DeclareMathOperator{\Pic}{Pic}
\DeclareMathOperator{\diag}{diag}
\DeclareMathOperator{\sm}{sm}
\theoremstyle{plain}
\newtheorem{theorem}{Theorem}[section]
\newtheorem{lemma}[theorem]{Lemma}
\newtheorem{proposition}[theorem]{Proposition}
\newtheorem{corollary}[theorem]{Corollary}
\newtheorem{theorem*}{Theorem}
\newtheorem{proposition*}[theorem*]{Proposition}
\newtheorem{lemma*}[theorem*]{Lemma}
\newtheorem*{proposition**}{Proposition}
\newtheorem*{conventions*}{Conventions}
\newtheorem{comment*}{Comment}
\newtheorem*{Acknowledgements}{Acknowledgements} 
\theoremstyle{definition}
\newtheorem{definition}[theorem]{Definition}
\newtheorem{remark}[theorem]{Remark}
\newtheorem{example}[theorem]{Example}
\numberwithin{equation}{section}
\newtheorem{theoremA}{Theorem}
\begin{document}

\title[Automorphism group of flag varieties]{Automorphism group of flag varieties\\ with non-reduced stabilizer}

\author{Matilde Maccan}
    \address{Fakultät für Mathematik, RUB, Universitätsstr. 150, 44801 Bochum, Germany}%
	\email{matilde.maccan@ruhr-uni-bochum.de}

\footnote*{Keywords: flag variety, parabolic subgroup, automorphism group, group schemes}
\footnote*{2020 Mathematics Subject Classification: 14M15, 14L15, 14L30, 17B20}
\date{\today}

\begin{abstract}
We consider rational projective homogeneous varieties over an algebraically closed field of positive characteristic, namely quotients of a semi-simple group by a possibly non-reduced parabolic subgroup. We determine the group scheme structure of the neutral component of their automorphism group, generalizing the classical result of Demazure on automorphism groups of flag varieties.
\end{abstract}

\maketitle

\addtocontents{toc}{\setcounter{tocdepth}{1}}
\tableofcontents

\section{Introduction}

Our main objects of interest are projective rational homogeneous varieties, namely quotients of a semisimple algebraic group $G$ by a \emph{parabolic subgroup}. When the stabilizer of a point is smooth, these are known \emph{flag varieties}, which form a well-studied class of objects in algebraic geometry. The root system of $G$ provides a useful combinatorial framework, allowing for explicit computations involving representation-theoretical tools. 

\medskip

Let us fix a Borel subgroup \( B \) with unipotent radical \( U \), along with a maximal torus \( T \subset B \). Up to conjugation, a parabolic subgroup is defined as a subgroup scheme of \( G \) containing \( B \). All objects and morphisms, unless otherwise specified, are defined over an algebraically closed base field \( k \), of characteristic \( p > 0 \).  Throughout, we work with group schemes of finite type over \( k \), so all subgroups are understood as subgroup \emph{schemes}, not necessarily smooth.

\medskip It is well known that, over an algebraically closed field, reduced parabolic subgroups containing \( B \) correspond bijectively to subsets of the set of simple roots of \( G \): a parabolic subgroup is uniquely determined by the basis of the root system of a Levi subgroup. However, over fields of positive characteristic, parabolic subgroups \emph{can be non-reduced} (equivalently, non-smooth). This leads to a richer structure and geometric behavior for the corresponding homogeneous spaces; see, for example, \cite[Section 4]{HL93}, \cite{Lauritzen}, and \cite[Theorem 3.1]{Totaro}. The classification of non-reduced parabolics was established in \cite{Wenzel} for characteristic at least 5, and was recently completed in small characteristics in \cite{Maccan, Maccan2}.

\medskip

A key ingredient in the theory of algebraic groups in positive characteristic, and even more in the classification of non-reduced parabolics, is the kernel of the \( m \)-th iterated (relative) Frobenius homomorphism of \( G \), which we denote as \( {}_mG \). In particular, Frobenius kernels are infinitesimal subgroups: by construction, their reduced part is trivial, making them, in a sense, the opposite notion to reduced group schemes. For background on group schemes, we refer to \cite{DG, Milne}.

\medskip

The central question addressed in this work is: what is the connected component of the automorphism group of \( G/P \), \emph{as a group scheme}? Its reduced subgroup is semisimple of adjoint type, but it might contain $G$ strictly. Since any projective rational homogeneous variety writes as a product of such varieties under \emph{simple adjoint} groups, we can and will assume that $G$ is simple adjoint. By the work of Demazure \cite{Demazure}, it is known that if \( P \) is reduced, then this automorphism group coincides with \( G \), except for three exceptional cases, all corresponding to varieties of Picard rank one. However, some examples in positive characteristic show that this automorphism group can be non-reduced. To the author's knowledge, the only example treated in the literature is \cite[Proposition 4.3.4]{BSU}, where non-reducedness is established via the dimension of the Lie algebra, though the precise structure of the group scheme remains undetermined.

\medskip

For a subset \( I \) of the simple roots of \( G \), we denote by \( P_I \) the corresponding reduced parabolic subgroup, whose Levi subgroup is generated by \( I \). For a simple root \( \alpha \), let \( P^\alpha \) be the maximal reduced parabolic subgroup that has trivial intersection with \( U_{-\alpha} \).

\medskip
Throughout the present text, we refer to the following pairs \((G, \alpha)\), where \( G \) is a simple adjoint group and \( \alpha \) a simple root, as \emph{exceptional pairs}, in the sense of Demazure's original paper \cite{Demazure}:
\begin{itemize}
    \item \( G = \PSp_{2n} \) with its first simple root \( \alpha_1 \);
    \item \( G = \SO_{2n+1} \) with the unique short simple root \( \alpha_n \);
    \item \( G \) of type \( G_2 \) with its short simple root \( \alpha_1 \).
\end{itemize}

The corresponding homogeneous spaces \( G/P^\alpha \) are well-known geometric objects: respectively, the projective space \( \proj^{2n-1} \); the Grassmannian of totally isotropic \( n \)-dimensional subspaces in an orthogonal vector space of dimension \( 2n+1 \); and a smooth quadric in \( \proj^6 \).

\medskip

We begin with the case of Picard rank one. By \cite{Maccan}, up to isomorphism, there is exactly one homogeneous variety not of the form \( G/P^\beta \) for some simple root \( \beta \). This variety exists only in characteristic \( p = 2 \) and is homogeneous under a group \( G \) of type \( G_2 \). It can be realized as a general hyperplane section, with respect to the unique ample generator of the six-dimensional Lagrangian Grassmannian. In \Cref{rank one exotic}, we show that the automorphism group of this variety is precisely \( G \), so no new phenomena occur in this case. For the second exotic parabolic subgroup, again for a group of type \( G_2 \) in characteristic two, its associated homogeneous space is \( \proj^5 \), with automorphism group \( \PGL_6 \supsetneq G \).

\medskip
The case of higher Picard ranks remains to be addressed. In this context, Schubert curves are of fundamental importance. These curves are smooth, rational, and \( B \)-stable, and they generate the cone of $1$-cycles on \( X \). Moreover, they are dual to the Schubert divisors with respect to the intersection pairing, and in particular the number of such curves equals the Picard rank of \( X \). As recalled in \Cref{sec contractions} below, the stabilizer of a point in \( X \) can be described via contractions of Schubert curves. By a contraction, we mean a morphism of varieties whose direct image of the structure sheaf coincides with the structure sheaf of the target. Such morphisms are significant in our context because, by Blanchard's Lemma \cite[$7.2$]{Brion17}, any action of a connected algebraic group on the source of a contraction $f$ induces a unique action on the target such that $f$ is equivariant.

\medskip

Combining these contractions with Demazure's three exceptional pairs, the group scheme structure of the automorphism group of \( X = G/P \) can be determined, as stated in \Cref{thm main geometric} below, which is the main result of the present work.

\medskip  The key point is that any non-reduced parabolic subgroup \( P \) (up to taking the quotient by the kernel of some purely inseparable isogeny, which does not affect the underlying homogeneous variety) is contained in a unique maximal reduced parabolic \( P^{\text{sm}} \), except in certain exotic cases for groups of type \( G_2 \) when \( p=2 \). The corresponding quotient map 
\[
f \colon X \longrightarrow  G/P^{\text{sm}} \]
can be described as the smooth contraction of Schubert curves on \( X \) that is of maximal relative Picard rank and does not contract \( X \) to a point; more details can be found in \Cref{sec contractions}. 


\begin{theoremA}
\label{thm main geometric} Let $X$ be a homogeneous variety under a simple group $G$ of adjoint type, of Picard rank at least two. Then the inclusion $G \hookrightarrow \underline{\Aut}_X^0$ is an isomorphism, unless: 
        \begin{enumerate}[(a)]
            \item either the smooth contraction $f$ exists, and its target is of the form $ Y = G/P^\alpha$ where $(G,\alpha)$ is an exceptional pair;
            \item or $p=2$, the group $G$ is of type $G_2$, and one of the two contractions of Schubert curves with source $X$ has target $Y = \proj^5$.
        \end{enumerate}
    Under these assumptions, there is some $m \geq 1$ such that
    \[
    G = (\underline{\Aut}_X^0)_{\text{red}} \subsetneq {}_m (\underline{\Aut}_Y^0) \cdot G \subsetneq \underline{\Aut}_Y^0.
    \]
\end{theoremA}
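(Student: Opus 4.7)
My plan is to use the smooth contraction $f \colon X \to Y = G/P^{\text{sm}}$, together with Blanchard's Lemma, to obtain a canonical morphism
\[
\Phi \colon \underline{\Aut}_X^0 \longrightarrow \underline{\Aut}_Y^0,
\]
and then to identify $\underline{\Aut}_X^0$ with its image inside the better-understood group $H \defeq \underline{\Aut}_Y^0$. The first task is to show that $\Phi$ is a closed immersion. The fibres of $f$ are infinitesimal homogeneous spaces of the form $P^{\text{sm}}/P$, so an element of $\ker \Phi$ would be a ``vertical'' automorphism; the explicit description of $P \subset P^{\text{sm}}$ from the classification \cite{Maccan, Maccan2} should rule out any non-trivial such automorphism in the connected component.

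Under this identification, the non-exceptional case is immediate: outside of (a) and (b), Demazure's theorem yields $H = G$, so $G \subseteq \underline{\Aut}_X^0 \subseteq H = G$. Suppose now we are in case (a) or (b). I would take $m$ to be the height of the quotient $P^{\text{sm}}/P$, that is, the smallest integer with $_mP^{\text{sm}} \subseteq P$. Denoting by $Q \supsetneq P^{\text{sm}}$ the stabiliser of the base point of $Y$ inside $H$, one has $_mQ \cap G = {}_mP^{\text{sm}} \subseteq P$, so $_mH$ normalises $P$ modulo $G$, and consequently $_mH \cdot G$ acts on $X = G/P$ lifting its action on $Y$. This gives the inclusion $\underline{\Aut}_X^0 \supseteq {}_mH \cdot G$, and the reverse inclusion should come from showing that $_{m+1}H$ does \emph{not} normalise $P$, via an explicit weight computation on the root subgroups of the Frobenius kernels of $H$.

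The crux of the argument is the final case analysis in the four exceptional situations: the three Demazure pairs $(\PSp_{2n}, \alpha_1)$, $(\SO_{2n+1}, \alpha_n)$, $(G_2, \alpha_1)$, and the $G_2$ case in characteristic two appearing in (b). In each of them, Demazure's theorem identifies $H$ as a specific classical or exceptional group strictly containing $G$, and the classification of non-reduced parabolics provides an explicit filtration of $P^{\text{sm}}/P$ by Frobenius kernels, from which one reads off $m$ and verifies the non-normalisation of $P$ by $_{m+1}H$. The remaining statements in the chain are then formal: $({}_mH \cdot G)_{\text{red}} = G$ because Frobenius kernels are infinitesimal, the strict inclusion $G \subsetneq {}_mH \cdot G$ holds since $m \geq 1$, and ${}_mH \cdot G \subsetneq H$ because $H$ admits Frobenius kernels strictly beyond $_mH$.
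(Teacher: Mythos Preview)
Your overall strategy—embed $\underline{\Aut}_X^0$ into $H=\underline{\Aut}_Y^0$ via Blanchard applied to the smooth contraction $f$—is a reasonable starting point, and the paper does use such an embedding as one ingredient. However, the paper's primary tool is the \emph{other} contraction $h\colon X\to G/(\ker\xi)P'$: since $(G,\beta)$ is never exceptional for $\beta\neq\alpha$, the target has automorphism group exactly $G^{(m)}$ (or $\overline{G}^{(m)}$), which yields a short exact sequence $1\to K\to\underline{\Aut}_X^0\to G^{(m)}\to 1$ with $K$ infinitesimal and embedded in $\hat G$ via $f_*$. The problem then reduces to identifying $K$, not to bounding the image of $\Phi$ from above.

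The genuine gap in your proposal is the reverse-inclusion step. Showing that ${}_{m+1}H$ fails to normalise $P$ rules out only that one candidate; it does not exclude other infinitesimal subgroups of $H$ that are normalised by $G$ and strictly contain ${}_mH$. The paper attacks this by analysing $K'=K/{}_m\hat G$: its Lie algebra is a $G^{(m)}$-submodule of $\Lie\hat G^{(m)}$ meeting $\Lie G^{(m)}$ trivially (resp.\ in $\Lie N$ when the very special isogeny intervenes), and such submodules are classified in Lemmas~\ref{keylemmaG}, \ref{keylemmaN} and \ref{KequalsN}. The delicate point is that in characteristic~$2$, for $G$ of type $B_n$ or $G_2$, the one-dimensional centre $\mathfrak z(\Lie\hat G)$ survives this module analysis, and excluding the corresponding $\boldsymbol\mu_2$ inside $K'$ requires the explicit incidence-variety computations of Propositions~\ref{no mu2 case1}--\ref{no mu2 case3}; a weight computation on root subgroups of Frobenius kernels cannot detect this central obstruction. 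A secondary issue: your justification for the injectivity of $\Phi$ is too sketchy, since infinitesimal fibres can have positive-dimensional automorphism schemes (e.g.\ $\underline{\Aut}_{\boldsymbol\alpha_p}=\Gm$); the paper obtains injectivity from the closed immersion $f\times h$ into the product, so that anything acting trivially on both targets acts trivially on $X$.
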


Let us reformulate \Cref{thm main geometric} in a way that is less geometric but more convenient for the proofs. We split the statement into two parts, corresponding to the conditions $(a)$ and $(b)$ in the statement. These will be proven independently, to account for the existence of exotic parabolic subgroups in characteristic two for groups of type \( G_2 \).

\begin{theoremA}
\label{aut main intro}
    Let $X = G/P$ where $G$ is a simple group of adjoint type and $P$ is a parabolic subgroup of the form
    \[
    P = P_J \cap (\ker \xi)P^\prime,
    \]
    where the roots not in the Levi of $P_J$ and those not in the Levi of $P^\prime_{\text{red}}$ form distinct subsets of the basis.
    Moreover, assume that the noncentral isogeny $\xi$ is minimal with respect to inclusion among those satisfying such an equality. Then the following hold.
    \begin{enumerate}
        \item If $P_J$ is not of the form $P^\alpha$ for an exceptional root $\alpha$, then $\underline{\Aut}_X^0 = G$.
        \item If $P_J = P^\alpha$ for some exceptional root $\alpha$, let $m \geq 0$ such that $ker \xi$ contains the Frobenius kernel ${}_mG$ but not ${}_{m+1}G$. Then $\underline{\Aut}_X^0 = {}_m\hat{G} \cdot G$, where $\hat{G} = \underline{\Aut}^0_{G/P^\alpha}$.
    \end{enumerate}
\end{theoremA}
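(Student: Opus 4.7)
The plan is to use the smooth contraction $f \colon X = G/P \to Y \defeq G/P_J$ coming from the inclusion $P \subseteq P_J$, and to apply Blanchard's lemma to reduce the computation to the classical case. This yields a homomorphism of connected group schemes
\[
\phi \colon \underline{\Aut}_X^0 \longrightarrow \underline{\Aut}_Y^0
\]
whose restriction to $G \hookrightarrow \underline{\Aut}_X^0$ is the canonical inclusion $G \hookrightarrow \underline{\Aut}_Y^0$, hence a section of $\phi$ over its image on $G$. By Demazure's classical theorem, $\underline{\Aut}_Y^0$ equals $G$ in case $(1)$ and $\hat{G}$ in case $(2)$.

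The next step is to pin down the reduced part $(\underline{\Aut}_X^0)_{\mathrm{red}}$ and the kernel $K \defeq \ker \phi$. Any reduced subgroup scheme of $\underline{\Aut}_X^0$ strictly containing $G$ would give $X$ the structure of a homogeneous space under a larger reduced group, so that the $G$-stabilizer in $X$ would be the intersection of $G$ with a reduced parabolic of this larger group, and would therefore itself be reduced. This contradicts the non-reducedness of $P$, so $(\underline{\Aut}_X^0)_{\mathrm{red}} = G$ in both cases. Consequently $K$ is infinitesimal, and it suffices to prove $\Lie K = 0$. Using the relative tangent sequence $0 \to T_{X/Y} \to T_X \to f^* T_Y \to 0$ together with $f_* \mathcal{O}_X = \mathcal{O}_Y$ and the projection formula, $\Lie K$ is identified with $H^0(X, T_{X/Y})$. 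This is the space of global sections of the $G$-equivariant sheaf on $X$ associated to the $P$-module $\Lie P_J / \Lie P$, and its computation, by pushing forward to $Y$ and analysing the $P_J$-module $H^0(P_J/P, T_{P_J/P})$ and then invoking a Borel--Weil--Bott style vanishing, should reduce to a representation-theoretic argument using the explicit decomposition $P = P_J \cap (\ker \xi) P'$ and the minimality of $\xi$.

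It remains to identify the image of $\phi$. In case $(1)$ the image lies in $G$, forcing $\phi$ to be the retraction onto $G$ and hence $\underline{\Aut}_X^0 = G$. In case $(2)$, one must show the image equals ${}_m \hat{G} \cdot G$ inside $\hat{G}$. The inclusion $\supseteq$ is obtained by constructing explicit lifts of elements of ${}_m \hat{G}$ to automorphisms of $X$, using that $\ker \xi \supseteq {}_m G$ guarantees that such elements preserve the subgroup $(\ker \xi) P'$, hence the defining intersection for $P$. The reverse inclusion $\subseteq$ uses the minimality of $\xi$, equivalently ${}_{m+1} G \not\subseteq \ker \xi$, to obstruct the lifting of any element of ${}_{m+1} \hat{G} \setminus {}_m \hat{G}$ to an automorphism of $X$. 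The main obstacle in the whole plan is precisely this lifting--obstruction analysis in case $(2)$, which demands a careful description of how the Frobenius kernels of $G \subset \hat{G}$ interact with the parabolic decomposition of $P$ and the minimality condition on $\xi$.
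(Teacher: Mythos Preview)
Your overall strategy of using Blanchard's lemma along the smooth contraction $f\colon X \to G/P_J$ is sound and is indeed one of the two contractions the paper exploits. However, there are two genuine gaps, and both are precisely where the paper has to work hardest.

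First, your argument for $(\underline{\Aut}_X^0)_{\mathrm{red}} = G$ is not valid: the intersection of $G$ with a reduced parabolic of a larger semisimple group need not be reduced, so homogeneity under a bigger smooth group does not automatically force $P$ to be smooth. The paper instead uses \emph{both} contractions --- the smooth one $f$ and the second one $h\colon X \to G/(\ker\xi)P'$ --- and argues by dimension: if the reduced automorphism group were $\hat G$, then $h_\ast$ would give an isogeny from $\hat G$ onto $G^{(m)}$ or $\overline G^{(m)}$, which is impossible.

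Second, and more seriously, your plan to prove $H^0(X,T_{X/Y})=0$ by a ``Borel--Weil--Bott style vanishing'' is not available: there is no such vanishing theorem for homogeneous bundles on $G/P$ with $P$ non-reduced, and in fact the paper \emph{deduces} this vanishing (its Corollary~\ref{lem : tf nosections}) from the main theorem, rather than using it as input. The paper's route is completely different: it works with the \emph{other} contraction $h$, whose target has automorphism group $G^{(m)}$ (or $\overline G^{(m)}$) by \Cref{rem: beta not exceptional}, and analyzes $K=\ker h_\ast$ as an infinitesimal subgroup of $\hat G$ normalized by $G$. The obstruction you allude to in case~(2) --- showing that nothing beyond ${}_m\hat G$ lifts --- is exactly the content of Lemmas~\ref{keylemmaG}, \ref{keylemmaN}, \ref{KequalsN} and the explicit coordinate computations in Propositions~\ref{no mu2 case1}--\ref{no mu2 case3}, which classify the possible $G$-stable Lie subalgebras of $\Lie\hat G$ meeting $\Lie G$ in $0$ or in $\Lie N$, and then rule out the residual central $\boldsymbol\mu_2$ by hand. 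None of this follows from a general cohomological principle; it requires case-by-case representation-theoretic analysis of the exceptional pairs.
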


\begin{theoremA}
\label{G2 intro}
    Let $p=2$ and $G$ be a group of type $G_2$. Let $X=G/P$ where $P$ is a parabolic subgroup not containing the Frobenius kernel of $G$. Assume that $X$ is not isomorphic to any of the varieties considered in \Cref{aut main intro}. Then the inclusion $G \hookrightarrow \underline{\Aut}_X^0$ is an isomorphism, unless 
    \[
    P = \mathcal{Q}_1 \cap {}_m GP^{\alpha_2},
    \]
    where $\mathcal{Q}_1$ is an exotic parabolic subgroup such that $G/\mathcal{Q}_1 = \proj^5$ and $\alpha_2$ is the long simple root; in this case,
    \[
    G = (\underline{\Aut}_X^0)_{\text{red}} \subsetneq {}_m (\PGL_6) \cdot G \subsetneq \PGL_6.
    \]
\end{theoremA}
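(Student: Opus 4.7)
Since $G$ has rank two, $X = G/P$ has Picard rank one or two. The Picard rank one situation was treated earlier in the paper (the truly exotic rational homogeneous variety in \Cref{rank one exotic} and the $\proj^5$ case noted immediately after), so I would restrict to Picard rank two from the outset. By hypothesis $X$ is not covered by \Cref{aut main intro}, which forces $P$ to involve at least one of the exotic parabolic subgroups classified in \cite{Maccan} for $G_2$ in characteristic two, namely $\mathcal{Q}_1$ (with $G/\mathcal{Q}_1 \simeq \proj^5$) and its long-root counterpart. Enumerating the possible intersections of such an exotic subgroup with a reduced or Frobenius-thickened parabolic yields a short finite list of cases to analyse.

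The main tool is then the pair of Schubert curve contractions on $X$. In Picard rank two these are exactly two maps $f_i \colon X \to Y_i$, and the combined morphism $(f_1,f_2) \colon X \hookrightarrow Y_1 \times Y_2$ realises $X$ as the incidence correspondence. Blanchard's lemma gives a morphism of group schemes $\underline{\Aut}_X^0 \to \underline{\Aut}_{Y_i}^0$ for each $i$, and since the combined map into $Y_1 \times Y_2$ is a closed immersion, the induced morphism
\[
\varphi \colon \underline{\Aut}_X^0 \hookrightarrow \underline{\Aut}_{Y_1}^0 \times \underline{\Aut}_{Y_2}^0
\]
is a closed immersion whose image is the stabiliser of $X$ in the product. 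In the distinguished case $P = \mathcal{Q}_1 \cap {}_m G P^{\alpha_2}$, one has $Y_1 = \proj^5$ with $\underline{\Aut}_{Y_1}^0 = \PGL_6$, and $Y_2 = G/{}_m G P^{\alpha_2}$, whose connected automorphism group is provided by \Cref{aut main intro} (essentially $G$, up to a small Frobenius-type extension). Describing $X \subset \proj^5 \times Y_2$ concretely as a $G$-stable incidence variety should then identify its stabiliser with ${}_m(\PGL_6) \cdot G$, yielding the claimed equality. For every other $P$ in the list, a parallel analysis shows that neither target $Y_i$ provides automorphisms beyond $G$, and the same diagram collapses $\varphi$ onto $G$.

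The main obstacle will be this stabiliser calculation: precisely showing that the $m$-th Frobenius kernel of $\PGL_6$, and no larger infinitesimal subgroup, lifts from its action on $\proj^5$ to an action on $X$. This requires a concrete description of $\mathcal{Q}_1$ in terms of the $G_2$-root datum---presumably via the seven-dimensional octonion representation modulo its fixed line---so that one can explicitly check which infinitesimal $\PGL_6$-symmetries preserve the extra datum of a fibre of the projection $X \to Y_2$. That same calculation is what isolates the exceptional $P$ from the list: in every other case, the incidence structure of $X$ inside $Y_1 \times Y_2$ is already rigid under the ambient automorphism group, leaving only $G$ behind.
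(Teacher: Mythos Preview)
Your outline is essentially the paper's strategy: reduce to the two families $Y_m = G/(\mathcal{Q}_1 \cap {}_m G P^{\alpha_2})$ and $Z_m = G/(\mathcal{Q}_2 \cap {}_m G P^{\alpha_2})$, use the two Schubert contractions and Blanchard to embed $\underline{\Aut}_X^0$ into the product of automorphism groups of the targets, and then identify which infinitesimal piece survives. For $Z_m$ the paper proceeds exactly as you suggest, invoking $\underline{\Aut}^0_{G/\mathcal{Q}_2}=G$ from \Cref{rank one exotic} and the argument of \Cref{lem : notexceptional}. For $Y_m$ the paper also checks, via the incidence description you anticipate (lines in $W=V/ke$ versus isotropic planes in $V$), that ${}_m\PGL_6$ acts.

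The one place where the paper's argument differs from your plan is the upper bound for $Y_m$. You propose to \emph{explicitly} check which infinitesimal $\PGL_6$-symmetries preserve a fibre of $X\to Y_2$; the paper instead runs a short representation-theoretic argument. Writing $K=\ker h_\ast$ and $K'=K/{}_m\PGL_6 \subset \PGL_6^{(m)}$, one has $\Lie K'\cap \Lie G^{(m)}=0$, so $\Lie K'$ sits inside $\Lie\PGL_6/\Lie G\simeq S^2W$ as a $G$-submodule. The simple subquotients of $S^2W$ are identified (a Frobenius-twisted trivial line, $W^{(1)}$, and $\Lambda^2W$), and one checks that no nonzero $G$-submodule of $S^2W$ lifts to a $G$-submodule of $\Lie\PGL_6$ disjoint from $\Lie G$. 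This forces $\Lie K'=0$, hence $K={}_m\PGL_6$. Your direct stabiliser computation would also work, but the $G$-module analysis is cleaner and avoids coordinates; it is the analogue of \Cref{keylemmaG} and \Cref{rmk no m2} in this exotic setting, and you might find it easier to execute than the explicit check you had in mind.

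One small correction: $\underline{\Aut}^0_{Y_2}$ for $Y_2=G/{}_mGP^{\alpha_2}$ is exactly $G^{(m)}$, with no extra extension, since $(G,\alpha_2)$ is not an exceptional pair; your hedging there is unnecessary.
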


\smallskip

The present text is structured as follows. In \Cref{sec parabolic groups}, the complete classification of parabolic subgroup schemes of a simple adjoint group \( G \) is briefly illustrated. In \Cref{sec Demazure}, we recall the description of the automorphism group in the case of a reduced parabolic and make some observations regarding Demazure's exceptional pairs. These results are then applied in \Cref{sec automorphism} to prove \Cref{aut main intro}. Finally, \Cref{sec exotic} addresses the exotic parabolic subgroups in characteristic two, establishing \Cref{G2 intro} and thereby completing the proof of \Cref{thm main geometric}.

\begin{Acknowledgements}
   I wish to express my gratitude to Michel Brion, Pierre-Emmanuel Chaput, Andrea Fanelli, Bianca Gouthier, Gebhard Martin, Matthieu Romagny, Ronan Terpereau and Dajano Tossici, for all the encouragement and the various stimulating mathematical discussions. This work was supported by the DFG through the research grants Le 3093/5-1 and Le 3093/7-1 (project numbers 530132094; 550535392).
\end{Acknowledgements}

\section{Parabolic subgroup schemes}\label{sec parabolic groups}
Let us start by reviewing the complete classification of parabolic subgroups, building on classical results in the theory of reductive groups, along with the works of \cite{Wenzel,HL93} under the assumption \( p \geq 5 \), and \cite{Maccan,Maccan2} covering the remaining cases in small characteristic.

\medskip

Let \( k \) be an algebraically closed field of characteristic \( p > 0 \). Let \( G \) be a semisimple algebraic group of adjoint type, and fix a Borel subgroup \( B \) with unipotent radical \( U \), along with a maximal torus \( T \subset B \).

\medskip  

We denote by \( \Phi \supset \Phi^+ \supset \Delta \) the sets of roots, positive roots, and simple roots of \( G \) associated with \( B \). For each root \( \gamma \), let \( U_\gamma \) be the corresponding root subgroup, \( u_\gamma \colon \Ga \to U_\gamma \) the associated root homomorphism, and \( \mathfrak{g}_\gamma \) the corresponding root subspace in \( \operatorname{Lie} G \). For a simple root \( \alpha \), denote by \( P^\alpha \) the maximal reduced parabolic subgroup that does not contain \( U_{-\alpha} \). The \( m \)-th iterated (relative) Frobenius homomorphism is written as
\[
F^m = F^m_G \colon G \longrightarrow G^{(m)}
\]
with kernel \( {}_mG \). Throughout, we work with group schemes of finite type over \( k \), so all subgroups are understood as subgroup \emph{schemes}, not necessarily smooth.

\subsection{The associated function} 
Let \( \widetilde{G} \) denote the simply connected cover of \( G \). In this setting, any parabolic subgroup of \( \widetilde{G} \) decomposes as a product of parabolics of the corresponding simple factors; \cite[Lemma 1.2]{Maccan2} for details. Moreover, the connected automorphism group of \( G/P \times G'/P' \) is the product of the automorphism groups of each factor. Finally, the center of \( \widetilde{G} \), where \( \sigma \colon \widetilde{G} \to G \) is the covering map, is contained in every parabolic subgroup. This allows us to assume, without loss of generality, that \( G \) is \emph{simple and adjoint}.

\medskip

By the \emph{height} \( \height(-) \) of a finite subgroup, we mean the smallest integer \( m \) such that the \( m \)-th iterated Frobenius on it is trivial; if no such \( m \) exists, the height is defined to be infinite. Note that for a subgroup \( K \subset G \) and a positive root \( \delta \), the condition that \( K \cap U_\delta \) has infinite height is equivalent to \( U_\delta \) being contained in \( K \). The following is a fundamental structure result describing groups of height at most \( 1 \) (that is, groups killed by the relative Frobenius) in terms of their Lie algebras: \cite[II, §7, n°4]{DG}.

\begin{theorem}
	\label{DG height one}
	Let $G$ be an algebraic group over $k$. Then
	\begin{enumerate}
		\item The map $M \to \Lie M$ induces a bijection from the set of \emph{subgroups of $G$ of height $\leq 1$} and the set of \emph{$p$-Lie subalgebras of $\Lie G$}. Under this bijection, normal subgroups correspond to $p$-Lie ideals.
		\item If $M$ and $M^\prime$ are two subgroups of $G$ and if $M$ is killed by the Frobenius morphism, then $M \subset M^\prime$ is equivalent to $\Lie M \subset \Lie M^\prime$.
		\item Any two homomorphism $f_1$ and $f_2$ with source ${}_1G$ satisfy $f_1 = f_2$ if and only if $\Lie f_1 = \Lie f_2$.
        \item Let $V$ be a finite-dimensional representation of $G$; then a vector subspace of $V$ is ${}_1G$-invariant if and only if it is $\Lie G$-invariant.
		\item If $M$ is a subgroup of $G$ killed by the Frobenius morphism, then $\Lie N_G(M) = N_{\Lie G} (\Lie M).$
	\end{enumerate}
\end{theorem}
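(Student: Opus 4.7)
The strategy is to first establish the fundamental equivalence of categories that underlies part (1), between closed subgroups of $G$ of height $\leq 1$ and $p$-Lie subalgebras of $\Lie G$, and then to derive the remaining parts (2)--(5) as formal consequences. For (1), I would use that $M \subset G$ has height $\leq 1$ iff $\mathcal{O}(M)$ is a local finite Hopf algebra with $f^p = 0$ on its augmentation ideal. Such Hopf algebras are exactly the $k$-linear duals of restricted enveloping algebras $u(\mathfrak{h})$ of finite-dimensional $p$-Lie algebras: from $M$ one recovers $\mathfrak{h} = \Lie M$, and conversely every $p$-Lie subalgebra $\mathfrak{h} \subset \Lie G$ produces a sub-Hopf-algebra $u(\mathfrak{h}) \subset u(\Lie G) = \mathcal{O}({}_1 G)^*$, whose dual quotient cuts out a height-$\leq 1$ closed subgroup of ${}_1 G \subset G$ with Lie algebra $\mathfrak{h}$. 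Normality of $M$ in $G$ translates, through the adjoint coaction, to $[\Lie G, \mathfrak{h}] \subset \mathfrak{h}$, i.e.\ to $\mathfrak{h}$ being a $p$-Lie ideal.

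Assertions (2)--(5) then fall out as corollaries. For (2), if $\Lie M \subset \Lie M'$, the scheme-theoretic intersection $M \cap M'$ is contained in $M$, hence of height $\leq 1$, and has Lie algebra $\Lie M \cap \Lie M' = \Lie M$; by (1), this forces $M \cap M' = M$, i.e.\ $M \subset M'$. For (3), I would apply the same argument to the equalizer of $f_1, f_2$ viewed as a closed subgroup of ${}_1 G$: its Lie algebra is $\{x \in \Lie G : (\Lie f_1)(x) = (\Lie f_2)(x)\}$, which equals $\Lie G$ under the hypothesis $\Lie f_1 = \Lie f_2$, so the equalizer is all of ${}_1 G$. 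Statement (4) is (2) applied to the fibered product ${}_1 G \times_{\GL(V)} \Stab_{\GL(V)}(W)$, whose Lie algebra is $\Lie G$ precisely when $W$ is preserved by $\Lie G$ acting through the representation. For (5), an element $x \in \Lie G$ lies in $\Lie N_G(M)$ iff conjugation by $1 + \varepsilon x \in G(k[\varepsilon])$ preserves $M$ base changed to $k[\varepsilon]$; applying (2) over $k[\varepsilon]$ reduces this to $\mathrm{Ad}(1 + \varepsilon x)(\Lie M \otimes k[\varepsilon]) = \Lie M \otimes k[\varepsilon]$, equivalently $[x, \Lie M] \subset \Lie M$.

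The main obstacle is the construction of the inverse functor in part (1). Concretely, one needs the nontrivial fact that for a $p$-Lie subalgebra $\mathfrak{h} \subset \Lie G$, the restricted enveloping algebra $u(\mathfrak{h})$ embeds as a sub-Hopf-algebra of $u(\Lie G)$ (a restricted analogue of the Poincaré--Birkhoff--Witt theorem), and that the resulting Hopf-algebra quotient of $\mathcal{O}({}_1 G)$ is compatible with the inclusion ${}_1 G \hookrightarrow G$ so as to genuinely define a closed subgroup scheme of $G$. Once this classical piece of Hopf-algebra input is in place, the remaining translations between group-theoretic and Lie-algebraic properties are formal.
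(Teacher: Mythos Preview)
The paper does not prove this statement at all: it is quoted verbatim as a structural input and attributed to \cite[II, \S 7, n\textsuperscript{o}4]{DG}, with no argument given. So there is no ``paper's own proof'' to compare against; the result functions here purely as a black box.

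Your sketch is the standard route to this classical result and is essentially what one finds in Demazure--Gabriel. The reduction of (2)--(4) to (1) via scheme-theoretic intersections and equalizers is clean and correct. One small point on (5): you invoke (2) over the base $k[\varepsilon]$, but (2) as stated is over the algebraically closed field $k$. This is harmless, since the equivalence in (1) between height-one subgroups and $p$-Lie subalgebras is functorial in the base and in particular holds after flat base change, so the same intersection argument goes through over $k[\varepsilon]$; you might want to say one word about this. Your identification of the ``main obstacle'' is also accurate: the substantive content is the restricted PBW theorem ensuring that $u(\mathfrak{h}) \hookrightarrow u(\Lie G)$ is an inclusion of Hopf algebras, from which the closed subgroup of ${}_1G$ with prescribed Lie algebra is built; everything else is formal.
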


\noindent For a non-reduced parabolic subgroup $P$ with reduced part $P_{\text{red}}$, define
\begin{align}
\label{infinitesimal}
U_P^- \defeq P \cap R_u(P_{\text{red}}^-)
\end{align}
as its intersection with the unipotent radical of the opposite reduced parabolic $P_{\text{red}}^-$. The subgroup $U_P^-$ is unipotent, infinitesimal, and satisfies
\begin{align}
\label{product}
U_P^- = \prod_{\gamma \in \Phi^+ \backslash \Phi_I} (U_P^- \cap U_{-\gamma}) \quad \text{and} \quad P = U_P^- \times P_{\text{red}},
\end{align}
where both identities are scheme isomorphisms given by the multiplication in $G$. Thus, $P$ is determined by its reduced part $P_{\text{red}}$, together with its intersections with the root subgroups contained in the opposite unipotent radical $R_u(P_{\text{red}}^-)$. This can be reformulated in a more combinatorial way by introducing a numerical function. The kernel of the $n$-th iterated Frobenius of the additive group $\Ga$ is denoted by $\boldsymbol{\alpha}_{p^n}$; by convention, $\boldsymbol{\alpha}_{p^\infty} = \Ga$.

\begin{definition}
\label{def_varphi}
    Let $P$ be a parabolic subgroup of $G$. The \emph{associated function}
    \[
    \varphi \colon \Phi \longrightarrow \N \cup \{\infty\}
    \]
    is defined by
    \[
 P \cap U_{-\gamma}= u_{-\gamma} ({\boldsymbol{\alpha}}_{p^{\varphi(\gamma)}}), \quad \gamma \in \Phi^+.
\]
\end{definition}

In other words, any positive root $\gamma$ not belonging to the root system of the Levi subgroup $P_{\text{red}}\cap P_{\text{red}}^-$ is mapped to the integer corresponding to the height of the finite unipotent subgroup $P \cap U_{-\gamma}$; all other roots are sent to infinity. For instance, the function associated to the parabolic ${_mG} P^\alpha$ sends all positive roots to infinity, except those whose support contains $\alpha$, which are assigned the value $m$. The following fundamental structure result is \cite[Theorem 10]{Wenzel}.

\begin{theorem}
\label{thm:numericalfunction}
The parabolic subgroup $P$ is uniquely determined by the function $\varphi$, with no assumption on the characteristic or on the Dynkin diagram of $G$.
\end{theorem}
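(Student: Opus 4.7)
The plan is to reconstruct $P$ from $\varphi$ using the decomposition $P = U_P^- \times P_{\text{red}}$ in \eqref{product}. It thus suffices to determine, independently, the reduced part $P_{\text{red}}$ and the infinitesimal complement $U_P^-$, and then combine them via multiplication in $G$.

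For the reduced part, I would observe that a simple root $\alpha \in \Delta$ lies in the set $I$ defining the Levi of $P_{\text{red}}$ precisely when $U_{-\alpha}$ is contained in $P$, equivalently when $\varphi(\alpha) = \infty$. Since over an algebraically closed field the standard reduced parabolics are in bijection with subsets of $\Delta$, the subgroup $P_{\text{red}}$ is determined by the restriction $\varphi|_\Delta$. For the infinitesimal part, fix any ordering of $\Phi^+ \setminus \Phi_I$; the product formula in \eqref{product} presents $U_P^-$ as the scheme-theoretic ordered product of the intersections $U_P^- \cap U_{-\gamma}$. Each such factor equals $P \cap U_{-\gamma} = u_{-\gamma}(\boldsymbol{\alpha}_{p^{\varphi(\gamma)}})$ by \Cref{def_varphi}, hence is specified by $\varphi(\gamma)$. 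Plugging both pieces back into $P = U_P^- \times P_{\text{red}}$ yields uniqueness.

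The argument above is formal once \eqref{infinitesimal}--\eqref{product} are available; the real work---and the main obstacle were one to prove the statement from scratch---is twofold. First, one must show that $\varphi$ is well-defined, i.e.\ that $P \cap U_{-\gamma}$ really is of the form $u_{-\gamma}(\boldsymbol{\alpha}_{p^n})$ for some $n \in \N \cup \{\infty\}$. This uses that the intersection is $T$-stable, combined with the fact that the only $T$-stable subgroup schemes of $U_{-\gamma} \cong \Ga$, on which $T$ acts through the nontrivial character $-\gamma$, are the Frobenius kernels $\boldsymbol{\alpha}_{p^n}$ and $\Ga$ itself. Second, one needs the product decomposition of $U_P^-$ itself, which is the structural heart of the classification and is established in \cite{Wenzel} for $p \geq 5$ and extended in \cite{Maccan, Maccan2} in small characteristics. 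Assuming these, I would write the proof as a two-line bookkeeping argument following the scheme above.
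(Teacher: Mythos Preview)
Your approach is correct and coincides with the paper's own argument: the paper does not give a separate proof but, immediately before stating the theorem, remarks that \eqref{product} shows $P$ is determined by $P_{\text{red}}$ together with the intersections $P \cap U_{-\gamma}$, and then cites \cite[Theorem 10]{Wenzel} for the formal statement. Your reconstruction of $P_{\text{red}}$ from $\varphi|_\Delta$ and of $U_P^-$ from the product formula is exactly this observation made explicit.

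One small correction to your commentary: the product decomposition \eqref{product} is stated in the paper \emph{without} any restriction on $p$ or on the Dynkin type, and the uniqueness theorem itself is attributed to Wenzel in full generality. What required the later work in \cite{Maccan, Maccan2} is not \eqref{product} or the present injectivity statement, but the \emph{classification} of which functions $\varphi$ actually arise from parabolic subgroups (Theorems~\ref{th: classification parabolic subgroups} and~\ref{thm: classification higher}). So your second ``obstacle'' is not an obstacle for this theorem; once one knows that a $T$-stable subgroup scheme of $R_u(P_{\text{red}}^-)$ is directly spanned by its root intersections, the uniqueness is the bookkeeping you describe.
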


Before going into the complete classification, let us introduce some terminology and notation concerning the exotic objects appearing in characteristic $2$ and $3$.


\subsection{The very special isogeny}
Assume that the Dynkin diagram of \( G \) has an edge of multiplicity \( p \); that is, either \( G \) is of type \( B_n \), \( C_n \), or \( F_4 \) in characteristic \( p=2 \), or of type \( G_2 \) in characteristic \( p=3 \). Under these assumptions, one can define the \emph{very special isogeny} of \( \widetilde{G} \) as the quotient
\[
\pi_{\widetilde{G}} \colon \widetilde{G} \longrightarrow \overline{\widetilde{G}}
\]
by the subgroup \( N = N_{\widetilde{G}} \), which is normal, non-central, killed by Frobenius, and minimal with these properties. This subgroup is uniquely determined by its Lie algebra, defined as the smallest \( p \)-Lie subalgebra containing the root subspaces associated to all \emph{short} roots. More precisely, the isogeny \( \pi = \pi_{\widetilde{G}} \) acts as Frobenius on the copies of the additive groups corresponding to short roots, and as an isomorphism on the others.

\medskip

The quotient \( \overline{\widetilde{G}} \) remains simple and simply connected, and its Dynkin diagram is dual to that of \( G \). We denote by \( \overline{G} \) the corresponding adjoint group. Moreover, the composition
\[
\pi_{\widetilde{\overline{G}}} \circ \pi_{\widetilde{G}}
\]
equals the Frobenius morphism of \( \widetilde{G} \): this is shown  in the original works by Borel and Tits \cite{BorelTits} and also in \cite[Chapter 7]{CGP15} in more detail. We now extend the notion of very special isogeny to \( G \) as follows. When defined for \( \widetilde{G} \), its image under \( \sigma \colon \widetilde{G} \to G \) is denoted
\[
N_G = \sigma(N_{\widetilde{G}}),
\]
and similarly, for any \( m \geq 1 \), we define \( {}^m N_G \) as the quotient of \( {}^m N_{\widetilde{G}} \) by the center. Note that \( N_G \) remains non-central, normal, killed by Frobenius, and minimal with these properties. We denote by \( \pi_G \) the composition of the quotient of \( G \) by \( N_G \), followed by the quotient by the center, so that the following diagram commutes.
\begin{center}
    \begin{tikzcd}
        \widetilde{G} \arrow[rr, "\pi_{\widetilde{G}}"] \arrow[d, "\sigma"] && \widetilde{\overline{G}} \arrow[d, "\overline{\sigma}"]\\
        G \arrow[rr, "\pi = \pi_G"] && \overline{G}.
    \end{tikzcd}
\end{center}


\subsection{Two exotic parabolic subgroups} 
\label{exotic}
Let us assume that $G$ is a group of type $G_2$ and that $p=2$. For its Dynkin diagram, we adopt the convention from \cite{Bourbaki}; thah is, we denote as $\alpha_1$ the short simple root and as $\alpha_2$ the long one. In \cite{Maccan}, it is shown that there are exactly two $p$-Lie subalgebras of $\Lie G$ that contain strictly $\Lie P^{\alpha_1}$, namely:
\[\mathfrak{q}_1 := \Lie P^{\alpha_1} \oplus \mathfrak{g}_{-2\alpha_1-\alpha_2}, \quad \mathfrak{q}_2 := \Lie P^{\alpha_1}\oplus \mathfrak{g}_{-\alpha_1}\oplus \mathfrak{g}_{-\alpha_1-\alpha_2}.
\]
Moreover, lifting these two subalgebras to subgroups of height one (see \Cref{DG height one} for the precise statement) and then multiplying them by $P^{\alpha_1}$, one obtains two exotic parabolic subgroups $\mathcal{Q}_1$ and $\mathcal{Q}_2$, satisfying 
\[
\Lie \mathcal{Q}_1 = \mathfrak{q}_1, \quad \Lie \mathcal{Q}_2 = \mathfrak{q}_2 \quad \text{and} \quad  (\mathcal{Q}_1)_{\text{red}} = (\mathcal{Q}_2)_{\text{red}} = P^{\alpha_1}\]
and whose unipotent infinitesimal part has height one. As we recall below (Theorem \ref{th: classification parabolic subgroups}), these two parabolic subgroups are sufficient to complete the classification in characteristic two, together with the ones obtaining from reduced ones, the Frobenius homomorphism and the very special isogenies. For more details on the construction of $\mathcal{Q}_i$ and the corresponding homogeneous varieties, see \cite{Maccan}.


\subsection{The complete classification}
The following result \cite[Proposition 3, Proposition 4]{Maccan} addresses parabolic subgroups whose reduced part is maximal, with respect to inclusion among reduced parabolics.

\begin{theorem}
\label{th: classification parabolic subgroups}
Let $G$ be a simply-connected simple group.
    Let $P$ be a parabolic subgroup of $G$ such that $P_{\text{red}} = P^\alpha$ for some simple root $\alpha$. Then if $p \geq 5$, or if $G$ is simply laced, or if $p=3$ and $G$ is of type $B_n$, $C_n$ or $F_4$, there is an integer $r\geq 0$ such that \[
    P = {}_rG P^\alpha.\]
    Otherwise, the following cases arise.
    \begin{itemize}
        \item if $G$ is of type $B_n$, $C_n$ or $F_4$ and $p=2$, or if $G$ is of type $G_2$ and $p=3$, there is some $r\geq 0$ such that
        \[
        P = {}_mG P^\alpha \quad \text{or} \quad P = {}^mN P^\alpha,
        \]
        where ${}^mN = {}^mN_G$ denotes the kernel of the very special isogeny $\pi_G\colon G \to \overline{G}$, followed by an $m$-th iterated Frobenius morphism;
        \item if $G$ is of type $G_2$, $p= 2$ and $\alpha = \alpha_2$ is the long simple root, then there is some $m \geq 0$ such that $P = {}_mG P^{\alpha_2}$;
        \item if $G$ is of type $G_2$, $p=2$ and $\alpha = \alpha_1$ is the short simple root, then $P$ is obtained by pullback via $F^m$ from one among $P^{\alpha_1}$, $\mathcal{Q}_1$ and $\mathcal{Q}_2$.
    \end{itemize}
\end{theorem}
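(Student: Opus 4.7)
The plan is to apply \Cref{thm:numericalfunction}: $P$ is determined by its associated function $\varphi: \Phi \to \N \cup \{\infty\}$ of \Cref{def_varphi}, so the task reduces to enumerating the admissible $\varphi$ compatible with $P_{\text{red}} = P^\alpha$. The latter condition forces $\varphi = \infty$ on the positive roots of the Levi of $P^\alpha$ and $\varphi \in \N$ on the set $\Phi^+(\alpha)$ of positive roots with nonzero $\alpha$-coefficient; indeed $\varphi(\gamma) = \infty$ for some $\gamma \in \Phi^+(\alpha)$ would strictly enlarge $P_{\text{red}}$.

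First I would encode the two natural constraints on $\varphi|_{\Phi^+(\alpha)}$. Constraint (i): stability of $U_P^-$ under conjugation by the Levi $L$ of $P^\alpha$. Via the Chevalley commutator relations with $u_{\pm\beta}$, $\beta \in \Delta\setminus\{\alpha\}$, this forces $\varphi$ to be constant on every $L$-orbit in $\Phi^+(\alpha)$ connected by a structure constant that is a unit in $k$. Constraint (ii): closure of $U_P^-$ under multiplication in $G$. The commutator $[u_{-\gamma_1}(x), u_{-\gamma_2}(y)]$ must land in $P$, yielding $\varphi(\gamma_1+\gamma_2) \geq \min(\varphi(\gamma_1), \varphi(\gamma_2))$ whenever $\gamma_1+\gamma_2 \in \Phi^+(\alpha)$ and the corresponding Chevalley coefficient is nonzero in $k$, together with the analogous $p$-power refinements in small characteristic.

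Next I would split into cases by Dynkin type and characteristic. When $G$ is simply laced, or more generally whenever no relevant Chevalley coefficient vanishes in $k$, the combined action of (i) and (ii) across the root lattice links all of $\Phi^+(\alpha)$ and forces $\varphi$ to collapse to a single value $r \in \N$, yielding $P = {}_rG P^\alpha$. In the exceptional multiply laced situations -- $B_n, C_n, F_4$ in characteristic $2$, or $G_2$ in characteristic $3$ -- and $\alpha$ such that $\Phi^+(\alpha)$ mixes the two root lengths, the short-long commutator coefficients are divisible by the edge multiplicity $p$ and hence vanish in $k$. The rigidity then relaxes to allow $\varphi$ to take two consecutive values $m$ and $m+1$ according to root length; a direct comparison identifies this new function with the one associated to ${}^mN P^\alpha$, using the defining minimality of $N_G$ as a normal, noncentral, Frobenius-killed subgroup of $G$. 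The remaining instance of $G_2$ at $p=2$ with $\alpha = \alpha_2$ long falls back into the rigid regime, since the only Chevalley coefficient divisible by $3$ is a unit in $k$; a careful orbit analysis in the six positive roots then forces $\varphi$ to be constant and hence $P = {}_mG P^{\alpha_2}$.

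The subtlest case is $G_2$ in characteristic $2$ with $\alpha = \alpha_1$ short: here the Levi is small, $\Phi^+(\alpha_1)$ contains roots of both lengths, and the numerical approach through $\varphi$ alone admits genuinely more candidate functions than those appearing in the statement. I would instead appeal to \Cref{DG height one} to reduce the classification of height-$\leq 1$ thickenings of $P^{\alpha_1}$ to the enumeration of $p$-Lie subalgebras of $\Lie G$ strictly containing $\Lie P^{\alpha_1}$; as recalled in \Cref{exotic}, there are exactly two, $\mathfrak{q}_1$ and $\mathfrak{q}_2$, giving rise to the exotic $\mathcal{Q}_1$ and $\mathcal{Q}_2$ upon multiplication by $P^{\alpha_1}$. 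Any parabolic $P$ with $P_{\text{red}} = P^{\alpha_1}$ is then the pullback by some $F^m$ of one of $P^{\alpha_1}$, $\mathcal{Q}_1$, $\mathcal{Q}_2$. The main obstacle of the proof is exactly this last case: the Lie-algebraic classification is irreplaceable there, reflecting the genuinely exotic behaviour of $G_2$ at $p=2$ and constituting the central technical input of \cite{Maccan}.
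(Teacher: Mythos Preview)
The paper does not prove this statement: it is quoted from \cite[Propositions~3 and~4]{Maccan} as background, with no argument supplied. There is therefore no in-paper proof to compare your sketch against.

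Your outline is nonetheless the right shape and matches the strategy of \cite{Wenzel} and \cite{Maccan}: pass to the numerical function $\varphi$ via \Cref{thm:numericalfunction}, constrain it through Levi-normalisation and the Chevalley commutator formula, and isolate the cases where a structure constant divisible by $p$ relaxes the rigidity. One step is glossed over. In the $G_2$, $p=2$, $\alpha=\alpha_1$ case you classify the height-one thickenings of $P^{\alpha_1}$ via \Cref{DG height one} and then assert that every $P$ is a Frobenius pullback of one of $P^{\alpha_1}$, $\mathcal{Q}_1$, $\mathcal{Q}_2$. The passage from the height-one list to this conclusion is not formal: after choosing $m$ maximal with ${}_mG\subset P$, one must still argue that the residual parabolic in $G^{(m)}$ has infinitesimal part of height exactly one, i.e.\ rule out functions $\varphi$ that thicken a single negative root subgroup (say $U_{-(2\alpha_1+\alpha_2)}$) to height $\geq 2$ while leaving the others trivial. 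That exclusion needs the explicit commutator computations carried out in \cite{Maccan}; you correctly flag this as the crux and defer to that reference, which is exactly what the present paper does as well.
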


The following result completes the classification and is proven in \cite[Theorem 1]{Maccan2}.

\begin{theorem}
\label{thm: classification higher}
    Let $P \subset G$ be a parabolic subgroup of a simply-connected semisimple group $G$, and let $\beta_1,\ldots, \beta_r$ be the simple roots of $G$ such that $P_{\red}$ is the intersection of the parabolic subgroups $P^{\beta_i}$. 
    Then
    \[
    P = \bigcap_{i=1}^r Q^i,
    \] where $Q^i$ is the smallest subgroup of $G$ containing both $P$ and $P^{\beta_i}$. In particular, every parabolic subgroup $P$ can be expressed as the intersection of parabolic subgroups whose reduced part is maximal, independently of type and characteristic.
\end{theorem}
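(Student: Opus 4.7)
The plan is to deduce the theorem from \Cref{thm:numericalfunction} by matching associated functions. The inclusion $P \subseteq \bigcap_{i=1}^{r} Q^i$ is immediate from the definition of the $Q^i$, so it remains to check that both sides share the same associated function. Since intersection of subgroup schemes commutes with passing to the root subgroup $U_{-\gamma}$, and since subgroup schemes of $U_{-\gamma} \cong \Ga$ are totally ordered (each equal to some $\boldsymbol{\alpha}_{p^n}$), one obtains
\[
\varphi_{\bigcap_i Q^i}(\gamma) \;=\; \min_i \varphi_{Q^i}(\gamma),
\]
and the problem reduces to proving $\varphi_P(\gamma) = \min_i \varphi_{Q^i}(\gamma)$ for every positive root $\gamma$.

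\medskip

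The heart of the argument is an explicit identification of each $Q^i$. Because $Q^i$ contains the maximal reduced parabolic $P^{\beta_i}$, its reduced part must equal $P^{\beta_i}$: indeed, for $\gamma$ with $\beta_i \in \mathrm{supp}(\gamma)$ one has $\varphi_P(\gamma) < \infty$, so $P_{\text{red}} \subseteq P^{\beta_i}$, which rules out $(Q^i)_{\text{red}} = G$. Hence $Q^i$ lies in the short list provided by \Cref{th: classification parabolic subgroups}. The containment $Q^i \supseteq P$ forces $\varphi_{Q^i} \geq \varphi_P$, while $Q^i \supseteq P^{\beta_i}$ forces $\varphi_{Q^i}(\gamma) = \infty$ for every $\gamma$ with $\beta_i \notin \mathrm{supp}(\gamma)$; by minimality of $Q^i$, one then obtains $\varphi_{Q^i}(\gamma) = \varphi_P(\gamma)$ on all roots with $\beta_i \in \mathrm{supp}(\gamma)$. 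In the non-exotic cases this reads $Q^i = {}_{n_i} G\, P^{\beta_i}$ with $n_i = \varphi_P(\beta_i)$ (a value that can be verified to be constant on roots with $\beta_i$ in their support).

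\medskip

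With the $\varphi_{Q^i}$ thus determined, the conclusion is immediate: for $\gamma$ with $U_{-\gamma} \subseteq P_{\text{red}}$, all contributions to the minimum are $\infty = \varphi_P(\gamma)$; otherwise some $\beta_i \in \mathrm{supp}(\gamma)$ delivers $\varphi_{Q^i}(\gamma) = \varphi_P(\gamma)$, with the remaining indices contributing either $\infty$ or the same value. The hard part will be the case-by-case identification of $Q^i$ in the exotic settings of \Cref{th: classification parabolic subgroups}: the types $B_n, C_n, F_4$ in characteristic $2$ and $G_2$ in characteristic $3$, where the very special isogeny contributes parabolics of the form ${}^m N\, P^{\beta_i}$; and most delicately type $G_2$ in characteristic $2$, where the exotic parabolics $\mathcal{Q}_1, \mathcal{Q}_2$ from \Cref{exotic} may arise. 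In these settings the associated function of $Q^i$ distinguishes long and short roots, and one must confirm directly that the minimality prediction matches one of the parabolics from the classification.
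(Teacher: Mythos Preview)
First, note that the paper does not actually prove this statement: it is quoted from \cite{Maccan2}, so there is no in-paper argument to compare against. Your overall strategy---reduce to the equality $\varphi_P = \min_i \varphi_{Q^i}$ via \Cref{thm:numericalfunction}, then identify each $Q^i$ through \Cref{th: classification parabolic subgroups}---is the right one and is essentially how the cited proof proceeds.

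However, your central intermediate claim is false. You assert that ``by minimality of $Q^i$, one obtains $\varphi_{Q^i}(\gamma) = \varphi_P(\gamma)$ on all roots with $\beta_i \in \operatorname{supp}(\gamma)$,'' and later that $n_i = \varphi_P(\beta_i)$. Take $G = \SL_3$, simple roots $\alpha_1,\alpha_2$, and $P$ with $\varphi_P(\alpha_1)=2$, $\varphi_P(\alpha_2)=1$, $\varphi_P(\alpha_1+\alpha_2)=1$. By \Cref{th: classification parabolic subgroups} the candidates for $Q^1$ are the ${}_n G\,P^{\alpha_1}$, whose function is the constant $n$ on $\{\alpha_1,\alpha_1+\alpha_2\}$; the containment $Q^1\supseteq P$ forces $n\geq 2$, hence $\varphi_{Q^1}(\alpha_1+\alpha_2)=2\neq 1=\varphi_P(\alpha_1+\alpha_2)$. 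Minimality of $Q^i$ only pins down $n_i$ as the \emph{maximum} of $\varphi_P$ over roots with $\beta_i$ in their support, not the value at $\beta_i$, and $\varphi_{Q^i}$ need not match $\varphi_P$ pointwise.

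What is missing is exactly the structural input on $\varphi_P$ that makes the theorem nontrivial: stability of $P$ under $B$-conjugation (via Chevalley commutators $[U_\alpha,U_{-\gamma}]$) forces inequalities of the type $\varphi_P(\gamma)\leq \varphi_P(\gamma-\alpha)$ whenever $\gamma-\alpha$ is a positive root, and in the simply laced case these chain up to give $\varphi_P(\gamma)\leq \varphi_P(\beta_i)$ for every $\beta_i\in\operatorname{supp}(\gamma)$, so that $n_i=\varphi_P(\beta_i)$ after all; the reverse inequality $\varphi_P(\gamma)\geq \min_{\beta_i\in\operatorname{supp}(\gamma)}\varphi_P(\beta_i)$ then comes from commutators among the $U_{-\beta_i}$. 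In the non-simply-laced and exotic cases these inequalities are modified by root lengths and possibly vanishing structure constants, which is precisely the case analysis carried out in \cite{Maccan2}. Your sketch treats these constraints as if they followed from the bare minimality of $Q^i$, which they do not. (A smaller issue: your argument that $(Q^i)_{\text{red}}=P^{\beta_i}$ is circular as written; this fact is available in the paper via the contraction description in \Cref{sec contractions}, citing \cite{Maccan}.)
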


\subsection{Contractions of Schubert curves}
\label{sec contractions}
We start by recalling a fundamental result in equivariant birational geometry; see \cite[$7.2$]{Brion17}.

\begin{theorem}[Blanchard's Lemma]
\label{blanchard}
Let $f \colon X \rightarrow Y$ be a contraction between projective varieties over $k$. Assume 
that $X$ is equipped with an action of a connected algebraic group $G$. Then there exists a unique $G$-action on $Y$ such that the morphism $f$ is $G$-equivariant.
\end{theorem}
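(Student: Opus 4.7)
The plan is to construct the desired $G$-action on $Y$ by descending the composition $f \circ a \colon G \times X \to Y$, where $a \colon G \times X \to X$ is the action on $X$, along the morphism $\id_G \times f \colon G \times X \to G \times Y$. The contraction hypothesis $f_* \mathcal{O}_X = \mathcal{O}_Y$ propagates to $(\id_G \times f)_* \mathcal{O}_{G \times X} = \mathcal{O}_{G \times Y}$ by flat base change along the flat structure morphism $G \to \Spec k$, so that $\id_G \times f$ is itself a contraction, in particular surjective with geometrically connected fibers. What remains is to show that $f \circ a$ is constant on the fibers of $\id_G \times f$: the universal property of contractions then produces the unique $\tilde{a} \colon G \times Y \to Y$ satisfying $\tilde{a} \circ (\id_G \times f) = f \circ a$.

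The core of the argument is therefore the set-theoretic constancy. For a closed point $y \in Y$, the fiber of $\id_G \times f$ over $(g,y)$ is $\{g\} \times f^{-1}(y)$, on which $f \circ a$ restricts to $x \mapsto f(g \cdot x)$. To prove this map is constant in $x$, I would introduce the auxiliary morphism
\[
h_y \colon G \times f^{-1}(y) \longrightarrow Y, \quad (g,x) \longmapsto f(g \cdot x).
\]
At the neutral element $g = e$, the restriction of $h_y$ to $\{e\} \times f^{-1}(y)$ coincides with $f|_{f^{-1}(y)}$ and collapses to the single point $y$. The projection $p \colon G \times f^{-1}(y) \to G$ is proper with $p_* \mathcal{O} = \mathcal{O}_G$, using that $f^{-1}(y)$ is proper and geometrically connected with $H^0(f^{-1}(y), \mathcal{O}) = k$; this last equality follows from the contraction hypothesis via cohomology and base change applied to the proper morphism $f$. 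The rigidity lemma then forces $h_y$ to factor through $p$, so $f(g \cdot f^{-1}(y))$ reduces to a single point for every $g$, as required.

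Once the factorization $\tilde{a}$ exists as a scheme morphism, uniqueness is immediate from the surjectivity of $\id_G \times f$: any two actions making $f$ equivariant must agree after precomposition with $\id_G \times f$, hence everywhere on $G \times Y$. The group action axioms for $\tilde{a}$ (associativity and the unit relation) are checked by the same descent principle: the relevant identities on $G \times G \times Y$ and on $Y$ hold after pulling back along the surjective contractions $\id_{G \times G} \times f$ and $f$, respectively, and therefore descend.

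The main obstacle is the rigidity step, which relies crucially on the equality $H^0(f^{-1}(y), \mathcal{O}) = k$ for every closed point $y$; mere surjectivity with connected fibers would not be enough, and it is precisely the condition $f_* \mathcal{O}_X = \mathcal{O}_Y$ built into the definition of contraction that makes the cohomology and base change argument work and enables the invocation of rigidity.
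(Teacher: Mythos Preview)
The paper does not supply its own proof of this statement; it is quoted with a reference to Brion's notes, so there is nothing in the paper to compare against. Your outline is the standard argument found in that reference and is essentially correct, with one soft spot.

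The claim that $H^0\bigl(f^{-1}(y),\mathcal{O}\bigr)=k$ follows from cohomology and base change is not justified as written: that theorem needs $\mathcal{O}_X$ flat over $Y$, which the contraction hypothesis does not guarantee, and the scheme-theoretic fibre of a contraction between projective varieties can be non-reduced. The simplest repair is to bypass the fibre-by-fibre detour over $Y$ and apply the factorisation argument directly to $\id_G\times f$, for which you have already checked $(\id_G\times f)_*\mathcal{O}_{G\times X}=\mathcal{O}_{G\times Y}$: since $f\circ a$ collapses every fibre over $\{e\}\times Y$, the usual rigidity argument (pick an affine neighbourhood in $Y$ of the image point, use properness of $\id_G\times f$ to pull back to an open in $G\times Y$, then factor the resulting ring map through $\mathcal{O}_{G\times Y}$) yields the factorisation on an open containing $\{e\}\times Y$, and connectedness of $G$ extends it globally. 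This form of rigidity uses only the pushforward condition, never $H^0$ of an individual fibre. Alternatively, since you only need set-theoretic constancy before invoking the universal property of the contraction, you may run your rigidity step on the reduced fibre $f^{-1}(y)_{\mathrm{red}}$, which is connected, proper, and reduced over the algebraically closed field $k$ and therefore does satisfy $H^0=k$.
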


Next, consider a homogeneous variety
\[
X= G/P, \quad \text{with } P_{\text{red}} = P_I
\] 
and denote as $o$ its base point. We can define the Schubert divisors of $X$, associated to the simple roots not in the Levi subgroup of $P_{\text{red}}$, as the closures of the following orbits:
\begin{align}
\label{Dalpha}
D_\alpha\defeq \overline{B^- s_\alpha o}, \quad \alpha \in \Delta \backslash I.
\end{align}
The Picard group of $X$ is freely generated by the linear equivalence classes of the $D_\alpha$s. Moreover, a divisor is numerically effective if and only if with respect to such a basis its coefficients are all non-negative. In particular, the Picard rank of $X$ equals the cardinality of $\Delta \backslash I$.
In \cite[Remark 3.18]{Maccan}, we build a finite family of morphisms
\begin{align}
\label{f_alpha}
f_\alpha \colon X  \longrightarrow Y_\alpha \defeq \Proj \left(\bigoplus_{m\geq 0} H^0(X, \mathcal{O}_X(mD_\alpha))\right), \quad \alpha \in \Delta \backslash I.
\end{align}
We construct $f_\alpha$ as the the unique contraction on $G/P$ such that the Schubert curves 
\[
C_\beta \defeq \overline{U_{-\beta} o}, \quad \beta \in \Delta \backslash I,
\]
which are smooth, are all contracted to a point, except for $C_\alpha$. More precisely, the only curves that are not contracted to a point are those that are numerically proportional to $C_\alpha$. The Schubert curves are dual objects to the Schubert divisors, in the sense that they form a basis for the cone of effective $1$-cycles in $X$ up to numerical equivalence, and that moreover their intersection numbers satisfy $D_\alpha \cdot C_\beta = \delta_{\alpha\beta}$. As an application of \Cref{blanchard} to the map $f_\alpha$, the target must be of the form 
\[
Y_\alpha = G/Q^\alpha,\]
for some parabolic subgroup $Q^\alpha$ containing $P$. Then \cite[Lemma 3.19]{Maccan} proves that $Q^\alpha$ is in fact the subgroup generated by $P$ and $P^\alpha$.

\begin{remark}
\label{minimale}
    As a consequence of \Cref{th: classification parabolic subgroups}, in all types except for $G_2$ in characteristic $2$, there is a unique isogeny $\xi_\alpha$ with no central factor, such that 
    \[
Q^\alpha = (\ker \xi_\alpha) P^\alpha.
\]
By isogenies with no central factors we mean those isogenies of the form
\[
F^m_G \quad \text{or} \quad F^m_{\overline{G}} \circ \pi_G,
\]
for some $m\geq 0$, where the second case only occurs when the Dynkin diagram of $G$ has an edge of multiplicity $p$ equal to the characteristic. Kernels of such isogenies are totally ordered by inclusion:
\begin{align}
\label{nocentral}
1 \subsetneq N \subsetneq {}_1G \subsetneq {}^1N \subsetneq \ldots \subsetneq {}_mG \subsetneq {}^mN \subsetneq {}_{m+1}G \subsetneq \ldots,
\end{align}
where ${}^mN_G$ (which we denote simply by ${}^mN$ when $G$ is implicit) is the kernel of the composition of a very special isogeny and an $m$-th iterated Frobenius morphism. Moreover, any isogeny with source $G$ is of the form (\ref{nocentral}) followed by a finite map with central kernel: \cite[Proposition 2]{Maccan}.
\end{remark}

\begin{remark}
\label{rem: psmooth}
    Let $X=G/P$ and assume that 
    $P$ does not contain the kernel of any isogeny with no central factor. Assume moreover that $P$ is not among the two exotic parabolic subgroups $\mathcal{Q}_1$ and $\mathcal{Q}_2$ in type $G_2$ and characteristic two. Then there is a unique reduced parabolic subgroup of $G$, minimal among those containing $P$, given by
    \[
    P^{\sm} =P_J = \bigcap \{P^\alpha \colon \alpha \in \Delta \backslash I \, \text{ and } \, Q^\alpha=P^\alpha\}.
    \]
    Moreover, $P$ can be written in a unique way as 
    \begin{align}
    \label{intersezione}
    P = P_J \cap (\ker \xi) P^\prime,
    \end{align}
    where $\xi$ is an isogeny with no central factor and $P^\prime$ is a parabolic subgroup with reduced part
    \[
    P^\prime_{\text{red}} = \bigcap \{P^\alpha \colon \alpha \in \Delta \backslash I \, \text{ and } \, Q^\alpha \neq P^\alpha\},
    \]
    In order for this expression to be unique we moreover have to assume that $\xi$ is minimal with respect to inclusion. To illustrate this, let us consider $G$ to be of type $G_2$ and assume $p=3$, so that we can define the very special isogeny with kernel $N$. Then 
    \[
    P^{\alpha_2} \cap {}_1G P^{\alpha_1} = P^{\alpha_2} \cap NP^{\alpha_1}.
    \]
    This comes from the fact that there is no long root $\gamma$ such that $\alpha_1 \in \Supp(\gamma)$ and $\alpha_2 \notin \Supp(\gamma)$. The only root satisfying both conditions is $\alpha_1$ itself, which is short.
\end{remark}


\section{Demazure's exceptional pairs}
\label{sec Demazure}

Let us make precise what we mean by automorphism group: for a projective variety $X$, the functor 
\[
\underline{\Aut}_X \colon (\mathbf{Sch}/k) \longrightarrow \mathbf{Grp}, \quad T \longmapsto \Aut_T(X_T),
\]
sending a $k$-scheme $T$ to the group of automorphisms of $T$-schemes of $X\times_k T$, is represented by a locally algebraic group $\underline{\Aut}_X$ \cite[Theorem 3.6]{MatsumuraOort}. 
We denote \(\underline{\Aut}_X^0\) as its identity component, which is a connected algebraic group.

\medskip

If \(X = G/P\) with $G$ simple adjoint, then the reduced subgroup \((\underline{\Aut}^0_X)_{\text{red}}\) is a simple group of adjoint type.  Moreover, if \(P\) is a reduced parabolic subgroup, then $\underline{\Aut}_X^0$ is known to coincide with its reduced part. More precisely, the following result of Demazure completely describes the automorphism group of $X$ when the stabilizer is a reduced parabolic \cite[Theorem 1]{Demazure}.

\begin{theorem}
\label{demazure77}
Let $G$ be a semisimple adjoint group over $k$ and $P$ a reduced parabolic subgroup of $G$. Then the natural homomorphism
\[
G \longrightarrow  \underline{\Aut}^0_{G/P}
\]
is an isomorphism in all but the three following cases: 
\begin{enumerate}[(1)]
    \item $G$ is of type $C_n$ for some $n \geq 2$ and $P= P^{\alpha_1}$ is associated to the first short simple root: in this case the automorphism group $\hat{G}$ is simple adjoint of type $A_{2n-1}$; 
    \item $G$ is of type $B_n$ for some $n \geq 2$ and $P = P^{\alpha_n}$ is associated to the short simple root: in this case the automorphism group $\hat{G}$ is simple adjoint of type $D_{n+1}$; 
    \item $G$ is of type $G_2$ and $P = P^{\alpha_1}$: in this case the automorphism group $\hat{G}= \SO_7$ is simple adjoint of type $B_3$.
\end{enumerate}
\end{theorem}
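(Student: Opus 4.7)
The plan is to identify $\underline{\Aut}_X^0$ for $X = G/P$ by first computing its Lie algebra $H^0(X, T_X)$ and then matching it with a reduced semisimple group acting transitively on $X$. The injection $G \hookrightarrow \underline{\Aut}_X^0$ is automatic since $G$ is adjoint and $P$ contains no nontrivial normal subgroup of $G$; it remains to show that this inclusion is an equality outside the three exceptional pairs, and to identify the enlarging group $\hat G$ in each exceptional case.

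The central step is the cohomological computation of $\Lie \underline{\Aut}_X^0 = H^0(X, T_X)$. Since $T_X$ is the $G$-equivariant vector bundle associated to the $P$-module $\mathfrak{g}/\mathfrak{p}$, one uses the tautological sequence
\[
0 \to G \times^P \mathfrak{p} \to \mathfrak{g} \otimes_k \mathcal{O}_X \to T_X \to 0
\]
together with a filtration of $\mathfrak{p}$ by $P$-submodules to reduce the question to computing the cohomology of line bundles $\mathcal{L}_\gamma$ on $X$ associated to positive roots $\gamma$ in $\mathfrak{p}$. Kempf's vanishing theorem, valid in arbitrary characteristic, eliminates most contributions. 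A careful root-theoretic analysis of the surviving terms then shows that $H^0(X, T_X) = \mathfrak{g}$ outside Demazure's three exceptional pairs, in which exactly one extra global section appears; this extra section traces back to a short simple root interacting with its long neighbor in a way specific to the types $B_n$, $C_n$, $G_2$.

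In each exceptional case, the enlarging group $\hat G$ is identified from the geometry of $X$: the quotient $\PSp_{2n}/P^{\alpha_1}$ is $\proj^{2n-1}$, hence $\hat G = \PGL_{2n}$; the quotient $\SO_{2n+1}/P^{\alpha_n}$ is isomorphic to a connected component of the maximal isotropic Grassmannian in a $(2n+2)$-dimensional orthogonal space, giving $\hat G$ of type $D_{n+1}$; and for $G$ of type $G_2$ with $\alpha_1$, the variety is a smooth quadric in $\proj^6$, so $\hat G = \SO_7$. In each case the dimension of $\hat G$ matches $\dim H^0(X, T_X)$, forcing $(\underline{\Aut}_X^0)_{\red} = \hat G$, and the dimension equality also yields the reducedness of $\underline{\Aut}_X^0$ itself. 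In the non-exceptional cases the same argument gives $(\underline{\Aut}_X^0)_{\red} = G$ and reducedness.

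The main obstacle is the cohomological computation in characteristic $p$, where one cannot rely on Borel--Weil--Bott. Kempf's theorem handles the vanishing of $H^1$ for the relevant line bundles, but pinpointing exactly when an extra $H^0$ appears requires careful case analysis: one has to check in each Dynkin type that no spurious section arises in small characteristics, and to verify that the three exceptional pairs produce the extra section independently of $p$.
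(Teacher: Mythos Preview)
The paper does not prove this theorem: it is quoted verbatim as a classical result of Demazure \cite[Theorem 1]{Demazure} and used as input for the rest of the article. So there is no ``paper's own proof'' to compare against; your sketch is essentially an outline of Demazure's original argument (compute $H^0(X,T_X)$ via the homogeneous bundle $G\times^P(\mathfrak g/\mathfrak p)$, filter, and use Kempf vanishing in place of Borel--Weil--Bott), which is the standard route.

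One inaccuracy worth flagging: you write that in the exceptional cases ``exactly one extra global section appears''. This is false as stated --- the jump in $\dim H^0(X,T_X)$ is $\dim\hat{\mathfrak g}-\dim\mathfrak g$, which is $2n^2-n-1$, $2n$, and $7$ in the three cases respectively. What is true is that one extra \emph{irreducible $G$-submodule} appears in $H^0(X,T_X)$ (namely $\Lie\hat G/\Lie G$, or its simple top in characteristic $2$); presumably this is what you meant, but the phrasing should be corrected. Apart from this, your outline is a reasonable summary of how the cited result is proved, and the identification of $\hat G$ via the geometry of $G/P^\alpha$ is exactly how the paper uses the theorem downstream.
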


With a slight change of notation from Demazure, the three pairs $(G,\alpha)$ appearing in cases $(1)$, $(2)$, and $(3)$ of \Cref{demazure77} are called \emph{exceptional}, and their associated automorphism group is denoted by $\hat{G} \supset G$. The corresponding homogeneous varieties (which we also call exceptional, when such a terminology arises no ambiguity) all have Picard rank one.

\medskip

Let us now proceed with a closer examination of these three families of exceptional pairs $(G,\alpha)$, with some useful facts concerning their Lie algebras and subalgebras. Particular care is required in small characteristics (two and three), where the kernel of the very special isogeny plays a significant role.

\begin{remark}
\label{description BD}
    If $G = \SO_{2n+1} \subset \hat{G}$, it is easier to perform the computation with the embedding $G \subset \hat{G} = \PSO_{2n+2}$. We can realize $G$ as the stabilizer of a non-isotropic line, as follows. Let us fix some coordinates: the quadratic form on $k^{2n+2}$, with canonical basis $e_0,\ldots, e_{2n+1}$, is given by
\[
x_0^2 + x_0x_{2n+1} + x_{2n+1}^2 + \sum_{i = 1}^{n-1} x_i x_{2n+1-i},
\]
and the vector we choose is $v_0 := e_0 + e_{2n+1}$. This way, the stabilizer of $k v_0$ is isomorphic to $G$, acting on
\[
v_0^\perp = k^{2n+1} = kv_0 \oplus ke_1 \oplus \ldots \oplus k e_{2n}
\]
preserving the quadratic form
\[
y_0^2 + \sum_{i=1}^{n-1} y_i y_{2n+1-i}.
\]
This has the advantage of being a description that works in any characteristic, including $p=2$. 
\end{remark}

\begin{lemma}
\label{keylemmaG}
    Let $(G,\alpha)$ be exceptional and denote as $\hat{G}$ the automorphism group of $G/P^\alpha$. Assume $\mathfrak{k} \subsetneq \Lie \hat{G}$ is both a Lie subalgebra and a $G$-submodule, satisfying $\mathfrak{k} \cap \Lie G = 0$. Then $\mathfrak{k}=0$ or $\mathfrak{k} = \mathfrak{z}(\Lie \hat{G})$ is the one-dimensional center of the Lie algebra. The latter can only happen for $p=2$ and a group $G$ of type $B_n$ or $G_2$. 
\end{lemma}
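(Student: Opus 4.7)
The approach is to combine the $G$-module decomposition of $\Lie \hat{G}$ relative to $\Lie G$ with the observation that a $G$-stable Lie subalgebra complementary to $\Lie G$ is forced to be an ideal of $\Lie \hat{G}$. Since $\mathfrak{k}$ and $\Lie G$ are both $G$-submodules of $\Lie \hat{G}$ with $\mathfrak{k} \cap \Lie G = 0$, the projection $\pi\colon \Lie \hat{G} \to \Lie \hat{G}/\Lie G$ restricts to an injection of $G$-modules on $\mathfrak{k}$. So the problem reduces to understanding the $G$-submodules of the quotient and then analyzing which ones can be lifted to $G$-invariant Lie subalgebras of $\Lie \hat{G}$ disjoint from $\Lie G$.

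In each of the three exceptional cases I would compute $\Lie \hat{G}/\Lie G$ as a $G$-module. For $(\PSp_{2n}, \alpha_1)$ with $\hat{G} = \PGL_{2n}$, the $\Sp_{2n}$-module decomposition $\End(V) = \mathfrak{sp}(V) \oplus W$ with $W \cong \wedge^2 V$ the ``$\omega$-symmetric'' part, passed to trace-zero and projective versions, identifies the quotient as an irreducible $G$-module (in characteristic two one proceeds via an analogous, possibly non-split, filtration). For $(\SO_{2n+1}, \alpha_n)$ with $\hat{G} = \PSO_{2n+2}$, using the matrix realization of \Cref{description BD}, the quotient is the $(2n+1)$-dimensional standard representation of $G$, which is irreducible in characteristic $\neq 2$ and fits in a non-split extension $0 \to k \to V_{2n+1} \to V_{2n} \to 0$ in characteristic two. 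For $(G_2, \alpha_1)$ with $\hat{G} = \SO_7$, the quotient is the $7$-dimensional standard $G_2$-representation, which is irreducible outside characteristic two and sits in $0 \to k \to V_7 \to V_6 \to 0$ in characteristic two.

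The next step is to rule out the case of $\mathfrak{k}$ being a full complement to $\Lie G$. Combining $[\mathfrak{k}, \mathfrak{k}] \subset \mathfrak{k}$ (from the Lie subalgebra property) with $[\Lie G, \mathfrak{k}] \subset \mathfrak{k}$ (from $G$-invariance) gives $[\Lie \hat{G}, \mathfrak{k}] = [\Lie G + \mathfrak{k}, \mathfrak{k}] \subset \mathfrak{k}$, so $\mathfrak{k}$ would be a proper ideal of $\Lie \hat{G}$. Since $\hat{G}$ is simple of type $A_{2n-1}$, $D_{n+1}$, or $B_3$, the proper ideals of $\Lie \hat{G}$ are contained in its center $\mathfrak{z}(\Lie \hat{G})$, which vanishes outside $(p=2, D_{n+1})$ and $(p=2, B_3)$ and is one-dimensional in those two cases; a one-dimensional ideal cannot be complementary to the nonzero $\Lie G$. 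Hence $\mathfrak{k}$ never supplies a full complement.

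Putting everything together: $\mathfrak{k}$ embeds as a proper $G$-submodule of $\Lie \hat{G}/\Lie G$. In the $(\PSp_{2n}, \alpha_1)$ case the quotient is $G$-irreducible and moreover $\mathfrak{z}(\Lie \hat{G}) = 0$, which prevents any trivial lift disjoint from $\Lie G$ and forces $\mathfrak{k} = 0$. In the other two cases with $p=2$, the only nonzero proper $G$-submodule of $\Lie \hat{G}/\Lie G$ is the one-dimensional trivial subrepresentation, so either $\mathfrak{k} = 0$ or $\mathfrak{k}$ is a one-dimensional $G$-invariant lift not contained in $\Lie G$; such a lift must coincide with $\mathfrak{z}(\Lie \hat{G})$, which in precisely these characteristic-two cases is one-dimensional and disjoint from $\Lie G$. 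The main obstacle will be the explicit verification of the $G$-module structure of $\Lie \hat{G}/\Lie G$ and the identification of $\mathfrak{z}(\Lie \hat{G})$ in characteristic two, where small-characteristic phenomena for $\mathfrak{so}_{2n+2}$ and $\mathfrak{so}_7$ are delicate; I expect to handle this by direct matrix computations using \Cref{description BD} together with the characteristic-two isomorphisms $\mathfrak{so}_{2n+1} \cong \mathfrak{sp}_{2n}$ and related structural facts.
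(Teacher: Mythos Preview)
Your strategy—embedding $\mathfrak{k}$ into $W=\Lie\hat G/\Lie G$ and then ruling out the full-complement case via an ideal argument—follows the same reduction as the paper but replaces its case-by-case verification (that a $G$-stable complement, when it exists in odd characteristic, is not closed under the bracket of $\Lie\hat G$; and that no $G$-stable complement exists at all when $p=2$) with the single observation that a $G$-stable Lie subalgebra complementary to $\Lie G$ is automatically an ideal of $\Lie\hat G$. This is cleaner, and the paper in fact invokes exactly this ideal trick later, in the proof of \Cref{keylemmaN} for $G_2$ with $p=3$.

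The gap is your assertion that every proper ideal of $\Lie\hat G$ lies in its center. This fails already for $\hat G=\PGL_{2n}$ whenever $p\mid 2n$ (so in particular always in characteristic two): there $\Lie\PGL_{2n}=\mathfrak{gl}_{2n}/kI$ contains the codimension-one ideal $\mathfrak{sl}_{2n}/kI$, which is far from central. Similar non-central ideals appear in $\Lie\PSO_{2n+2}$ and $\Lie\SO_7$ in characteristic two. The fix is immediate once you also use the hypothesis $\mathfrak{k}\cap\Lie G=0$: in each exceptional case the non-central proper ideals of $\Lie\hat G$ \emph{contain} $\Lie G$ (for instance $\Lie\PSp_{2n}\subset\mathfrak{sl}_{2n}/kI$ because symplectic matrices are traceless), so none of them can equal $\mathfrak{k}$. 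With that correction your argument goes through and is arguably tidier than the paper's; alternatively, in characteristic two you can bypass the ideal step entirely by checking, as the paper does, that $\Lie G$ admits no $G$-stable complement in $\Lie\hat G$.
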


\begin{proof}
  Let us assume that $\mathfrak{k}$ is nonzero. Then the assumption $\Lie G \cap \mathfrak{k} = 0$ means that we can see $\mathfrak{k}$ as a nontrivial $G$-submodule of 
  \[
  W := \Lie \hat{G}/\Lie G.
  \]
  We now proceed with each exceptional pair separately.

  \smallskip
  
  If $G = \PSp_{2n} \subset \hat{G} = \PGL_{2n}$, then independently of the characteristic we have $W \simeq \Lambda^2 k^{2n}$, which is a simple $G$-module because the Weyl group of $G$ acts transitively on the set of its weights. So $\mathfrak{k} \neq 0$ implies that $\mathfrak{k} \simeq \Lambda^2 k^{2n}$ as $G$-submodules of $\Lie \hat{G}$. If $p=2$ such a submodule does not exist, because $\Lie G$ does not admit a $G$-stable complementary subspace inside $\Lie \hat{G}$. If $p \geq 3$, then such a subspace indeed exists (one can write any traceless matrix as the sum of a symmetric and an antisymmetric one); however it is not stable by the bracket of $\Lie \hat{G}$. Indeed, the bracket of two skew-symmetric traceless matrices can pick up a component in the complementary subspace of symmetric matrices.

\smallskip 

If $G = \SO_{2n+1} \subset \hat{G} = \PSO_{2n+2}$, then 
\[
\Lie \hat{G} = \Lambda^2 k^{2n+2}, \quad \Lie G = \Lambda^2 k^{2n}, \quad W \simeq k^{2n+1},
\]
where the $G$-action on $W$ is the one given by the standard representation. If $p \geq 3$, then $W$ is a simple $G$-module thus $\mathfrak{k}\neq 0$ implies that $\mathfrak{k} \simeq k^{2n+1}$ is a $G$-stable complementary subspace of $\Lie G $ inside $\Lie \hat{G}$. Such a subspace exists, however it is not stable by the bracket of $\Lie \hat{G}$, so we get a contradiction. If $p=2$, then $W$ is not simple; more precisely, it admits exactly one nontrivial submodule $l \subset W$ on which $G$ acts trivially. If $\mathfrak{k} = W$ we get a contradiction because the splitting does not exist in characteristic two, so we must have $\mathfrak{k} \simeq l$ as $G$-submodules of $\Lie \hat{G}$. Then $\mathfrak{k}$ has to coincide with the only line in $\Lie \hat{G}$ on which $G$ acts trivially; namely it is generated by $e_0 \wedge e_{2n+1}$ and it coincides with the center $\mathfrak{z}(\Lie \hat{G})$, so it is also stable by the bracket.

  \smallskip

If $G \subset \hat{G} = \SO_7$ is an exceptional group of type $G_2$, then $W \simeq k^7$ is the $G$-module associated to the standard representation of $G_2$ as acting on the hyperplane of pure octonions. If $p \geq 3$, then $W$ is a simple $G$-module and thus $\mathfrak{k} \simeq k^7 \subset \Lie \hat{G}$. There exist such a $G$-stable complementary subspace, but analogously as in the previous cases, it is not stable by the bracket of $\Lie \hat{G}$. If $p=2$, then $W$ is not simple and it admits exactly a one-dimensional submodule $l \subset W$ on which $G$ acts trivially. If $\mathfrak{k} \simeq W$ we get a contradiction because the splitting does not exist in characteristic $2$, so we must have $\mathfrak{k} \simeq l$. Again, this means that $\mathfrak{k}$ is the only nontrivial subspace of $\Lie \SO_7$ that is fixed by $G_2$, which is the one-dimensional center $\mathfrak{z}(\Lie \hat{G})$, and we are done.
\end{proof}

\begin{lemma}
\label{keylemmaN}
    Assume that the very special isogeny of $G$ is defined. Let $(G,\alpha)$ be exceptional and denote as $\hat{G}$ the automorphism group of $G/P^\alpha$. Assume $\mathfrak{k} \subsetneq \Lie \hat{G}$ is both a Lie subalgebra and a $G$-submodule and that moreover $\mathfrak{k} \cap \Lie G = \Lie N$.
    
    Then necessarily $\mathfrak{k} = \Lie N$, unless $\mathfrak{k} = \Lie N \oplus \mathfrak{z}(\Lie \hat{G})$ when $G$ is of type $B_n$ and $p=2$.
 \end{lemma}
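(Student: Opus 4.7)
The strategy is to reduce to \Cref{keylemmaG} by projecting onto $W := \Lie \hat G/\Lie G$. Composing the inclusion $\mathfrak{k} \hookrightarrow \Lie \hat G$ with the natural quotient and using $\mathfrak{k} \cap \Lie G = \Lie N$, one identifies $\bar{\mathfrak{k}} := \mathfrak{k}/\Lie N$ with a $G$-submodule of $W$. The submodule analysis of $W$ already carried out in the proof of \Cref{keylemmaG} applies verbatim and yields the following: for the pairs $(G,p) \in \{(\PSp_{2n},2), (G_2,3)\}$ the module $W$ is simple, so $\bar{\mathfrak{k}} \in \{0, W\}$; whereas for $(\SO_{2n+1},2)$ the module $W$ has exactly one nontrivial proper submodule $l$, the trivial $G$-line lifting to the center $\mathfrak{z}(\Lie \hat G) \subset \Lie \hat G$, so $\bar{\mathfrak{k}} \in \{0, l, W\}$.

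The case $\bar{\mathfrak{k}} = 0$ directly gives $\mathfrak{k} = \Lie N$. The case $\bar{\mathfrak{k}} = l$ (which occurs only for $B_n$ in characteristic $2$) is handled as follows: since $G$ acts trivially on $l$ and the only $G$-stable line in $\Lie \hat G$ projecting onto $l$ is the center $\mathfrak{z}(\Lie\hat G)$, one gets $\mathfrak{k} = \Lie N \oplus \mathfrak{z}(\Lie \hat G)$, which is automatically a $p$-Lie subalgebra because the center commutes with all of $\Lie \hat G$. This yields the exceptional conclusion of the lemma.

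The remaining case $\bar{\mathfrak{k}} = W$, in which $\mathfrak{k} + \Lie G = \Lie \hat G$, must be ruled out in each of the three exceptional pairs and constitutes the heart of the argument. I would adapt, modulo $\Lie N$, the bracket-closure obstruction from \Cref{keylemmaG}: pick lifts $\tilde{w}_1, \tilde{w}_2 \in \mathfrak{k}$ of carefully chosen short-root-supported elements of $W$, and compute $[\tilde{w}_1, \tilde{w}_2] \in \mathfrak{k}$. Since the analogous bracket in \Cref{keylemmaG} had a nonzero component in $\Lie G$, in the present setting one obtains an element of $\mathfrak{k} \cap \Lie G = \Lie N$, so the contradiction reduces to showing that this bracket actually acquires a long-root or Cartan component lying in $\Lie G \setminus \Lie N$.

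The main obstacle is precisely this last step: one has to identify accurately the root and Cartan content of $\Lie N$ — defined as the $p$-Lie subalgebra generated by the short-root subspaces of $\Lie G$ — and check case by case, for $\PSp_{2n} \subset \PGL_{2n}$, $\SO_{2n+1} \subset \PSO_{2n+2}$ and $G_2 \subset \SO_7$, that the obstructing bracket produced in the proof of \Cref{keylemmaG} indeed escapes $\Lie N$. In each case this refinement amounts to a direct computation with a chosen Chevalley basis, parallel to but more delicate than the one in \Cref{keylemmaG}, and relies on the fact that short-root brackets in the three relevant characteristics produce long-root components which survive the quotient by $\Lie N$.
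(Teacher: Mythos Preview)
Your reduction to $W = \Lie\hat G/\Lie G$ and the treatment of the cases $\bar{\mathfrak{k}}=0$ and $\bar{\mathfrak{k}}=l$ are exactly the paper's. The divergence, and the problem, is in the case $\bar{\mathfrak{k}}=W$.

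There is a logical slip in your bracket argument. You write that since $[\tilde w_1,\tilde w_2]\in\mathfrak{k}$ and ``the analogous bracket in \Cref{keylemmaG} had a nonzero component in $\Lie G$'', one obtains an element of $\mathfrak{k}\cap\Lie G$. But an element of $\mathfrak{k}$ having a nonzero projection to $\Lie G$ along some (noncanonical) decomposition is not the same as lying in $\Lie G$; to land in $\mathfrak{k}\cap\Lie G$ you need the image of $[\tilde w_1,\tilde w_2]$ in $W$ to vanish. Since $\Lie G$ is only a subalgebra of $\Lie\hat G$, not an ideal, that image genuinely depends on the lifts $\tilde w_i$, which are only determined modulo $\Lie N$; the correction terms $[\Lie N,\tilde w_i]$ need not map to zero in $W$. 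So the step ``the contradiction reduces to showing the bracket escapes $\Lie N$'' is not yet justified: you would first have to pin down the lifts and arrange that the $W$-component of the bracket vanishes, which is an extra, and delicate, constraint.

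The paper avoids this entirely by arguing differently in each case. For $(G_2,p=3)$ one has a $G$-module splitting $\Lie\hat G=\Lie G\oplus W$, which forces $\mathfrak{k}=\Lie N\oplus W$ on the nose; then $G$-stability of both summands gives $[\mathfrak{k},\Lie G]\subset\mathfrak{k}$, and together with $[\mathfrak{k},\mathfrak{k}]\subset\mathfrak{k}$ and $\Lie\hat G=\Lie G+\mathfrak{k}$ this makes $\mathfrak{k}$ a nontrivial ideal of $\Lie\SO_7$, contradicting its simplicity in characteristic $3$. For $(\PSp_{2n},p=2)$ and $(\SO_{2n+1},p=2)$ the paper uses no bracket at all: the relevant extension of $G$-modules does not split, so there is simply no $G$-submodule $\mathfrak{k}\subset\Lie\hat G$ with $\mathfrak{k}\cap\Lie G=\Lie N$ surjecting onto $W$. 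These arguments are both shorter and avoid the lift-dependence issue above.
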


\begin{proof}
Let us assume that $\mathfrak{k}$ strictly contains $\Lie N$. By the assumption $\Lie G\cap \mathfrak{k} = 0$, the projection $\mathfrak{p}$ of $\mathfrak{k}$ onto the quotient $W = \Lie \hat{G}/\Lie G$ is a non-trivial $G$-module.

 If $p=2$ and $G = \PSp_{2n}$, then $W = \Lambda^2 k^{2n}$ is a simple $G$-module, hence it must coincide with $\mathfrak{p}$. However, $W$ does not lift to a $G$-submodule inside $\Lie \hat{G}$, so we have a contradiction.

If $p=2$ and $G = \SO_{2n+1}$, by reasoning in the analogous way as in the proof of \Cref{keylemmaG}, we find that either $\mathfrak{p} = W$ or that $\mathfrak{p} = k(e_0 \wedge e_{2n+1)}$ is the projection of the center of $\Lie \hat{G}$. Thus as a subspace either $\mathfrak{k} = \Lie N \oplus k^{2n+1}$, which is not stable by the $G$-action, or $\mathfrak{k} = \Lie N \oplus \mathfrak{z}(\Lie \hat{G})$. 

    If $p=3$ and $G$ is an exceptional group of type $G_2$, then $\Lie \hat{G} = \Lie G \oplus W$ decomposes as a direct sum, and moreover $W$ is simple as a $G$-module. Thus, the condition $\mathfrak{k} \neq \Lie N$ implies that $\mathfrak{k} = \Lie N \oplus W$. Since both $\Lie N$ and $W$ are $G$-modules, they are in particular stable by taking brackets with $\Lie G$, i.e.
    \[
    [\Lie N, \Lie G] \subset \Lie N \subset \mathfrak{k} \quad \text{and} \quad [W, \Lie G ]\subset W \subset \mathfrak{k}.
    \]
    This implies that $[\mathfrak{k}, \Lie G] \subset \mathfrak{k}$. If we assume that $\mathfrak{k}$ is a Lie subalgebra of $\Lie \hat{G}$, then we get that $[\mathfrak{k}, \Lie \hat{G}] \subset \mathfrak{k}$, because $\Lie \hat{G} = \Lie G + \mathfrak{k}$. In other words, $\mathfrak{k}$ is a nontrivial Lie ideal of $\Lie \hat{G}$. However this cannot happen because $\Lie \SO_7$ is simple as a Lie algebra in characteristic $p=3$; indeed, a Lie algebra of type $B_3$ is simple in any characteristic except for $p=2$.
\end{proof}

To obtain more detailed information, it is necessary to go beyond the Lie algebra level and examine the corresponding homogeneous varieties of Picard rank two more closely.

\begin{proposition}
    \label{no mu2 case1}
    Let $p=2$ and $X=G/P$ with $G$ of type $B_n$ and
    \[
    P=P^{\alpha_n} \cap {}_m G P^{\alpha_i}
    \]
    for some $m \geq 1$ and some $1\leq i <n$. Let $H \subset \hat{G} = \PSO_{2n+2}$ be a copy of $\boldsymbol{\mu}_{p^{m+1}}$ such that $\Lie H = \mathfrak{z}(\Lie \hat{G})$.
    
    Then the action of $H$ on the product $G/P^{\alpha_n} \times G/{}_mGP^{\alpha_i}$ (standard twisted by the $m$-th Frobenius on the first factor and trivial on the second) does not preserve the variety $X$.
\end{proposition}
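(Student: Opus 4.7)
The strategy is to reduce the claim to a computation at the base point of $X$ and to exploit the rigidity of the $G$-action. Assume for contradiction that the stated $H$-action preserves $X \subset Y_1 \times Y_2$. Since the action on $Y_2$ is trivial, every $h \in H$ fixes $o_2$, so preservation of the $G$-orbit $X = G \cdot (o_1, o_2)$ forces, for every $g \in G$, that the element $g^{-1} F^m(h) g \in \hat G$ lie in the subset $({}_mG\,P^{\alpha_i}) \cdot \hat P$, where $\hat P \subset \hat G = \PSO_{2n+2}$ is the stabilizer of $o_1$. Equivalently, its image in $Y_1 = \hat G/\hat P$ must lie in the scheme-theoretic fiber $F := ({}_mG\,P^{\alpha_i}) \cdot o_1 \subset Y_1$ of $X \to Y_2$ over $o_2$. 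The goal is to exhibit a specific $g \in G$ for which this fails.

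Since $\Lie H = \mathfrak{z}(\Lie \hat G)$ is central in $\Lie \hat G$, it lies in $\Lie \hat P$, so the first-order action at $o_1$ vanishes and the tangent-level arguments of \Cref{keylemmaG} and \Cref{keylemmaN} do not apply directly. Instead, one places $H$ inside a maximal torus $\hat T \supset T$ and works in the big cell $\hat U^- \subset Y_1$, where $F^m(h) \in \hat T$ acts by conjugation on each root subgroup $U_{-\gamma}$ via the character $\gamma^{-p^m}|_H$. Taking $g = u_{-\gamma}(x) \in G$, the standard torus-unipotent commutator formula gives
\[
g^{-1} F^m(h)\, g \;=\; F^m(h) \cdot u_{-\gamma}\!\left( (1 - \gamma(h)^{-p^m})\,x \right),
\]
whose image in $Y_1$ has a $U_{-\gamma}$-component that, viewed as a scheme in the $H$-parameter, realizes the full order $p^{m+1}$ of $H$ precisely when $\gamma|_H$ is faithful.

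For $H$-preservation of $X$, this $U_{-\gamma}$-component has to lie in $F \cap \hat U^-$. The plan is to choose $\gamma$ to be a root of $G$ whose support contains $\alpha_i$ and whose restriction to $H$ is faithful. Under such a choice, $F$ thickens along the $U_{-\gamma}$-direction only through the factor ${}_mG$, contributing depth $p^m$ which is strictly less than the $p^{m+1}$ produced by $H$, thereby yielding the required contradiction. The existence of such a $\gamma$ is ensured by the hypothesis $1 \leq i < n$ combined with the explicit generator $z = e_0 \wedge e_{2n+1}$ of $\mathfrak{z}(\Lie \hat G)$ from the realization of \Cref{description BD}. The main technical delicacy lies in verifying the faithfulness of $\gamma|_H$ in characteristic two, where $\mathfrak{z}(\Lie \hat G)$ interacts non-trivially with the $B_n \hookrightarrow D_{n+1}$ embedding and where one must also rule out any cancellation coming from the nonlinear group law on $\hat U^-$.
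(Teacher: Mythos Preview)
Your approach is structurally different from the paper's: the paper chooses explicit coordinates for the pair $(E_0,W_0)$ and checks by hand that the incidence relation $E_0\subset F^m(t\cdot W_0)$ fails for a generator $t$ of $H$. Your idea of detecting the failure via a torus--unipotent commutator at a suitable $g=u_{-\gamma}(x)$ is appealing, but as written it has a genuine gap, and the gap is precisely the point you flag as ``the main technical delicacy''.

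The problem is that the two requirements you impose on $\gamma$ are mutually exclusive. Place $H$ inside a maximal torus $\hat T\supset T$ as you do; in the coordinates of \Cref{description BD} this is the $\varepsilon_0$--direction $\diag(t,1,\dots,1,t^{-1})$. For the commutator formula $g^{-1}hg=h\cdot u_{-\gamma}\bigl((1-\gamma(h)^{-1})x\bigr)$ to hold, $U_{-\gamma}$ must be normalised by $\hat T$, i.e.\ $U_{-\gamma}$ must be a root subgroup of $\hat G=\PSO_{2n+2}$. The root subgroups of $G=\SO_{2n+1}$ that remain root subgroups of $\hat G$ are exactly those for \emph{long} roots $\pm\varepsilon_i\pm\varepsilon_j$ ($i,j\geq 1$); but every such character is trivial on $H$, so $\gamma|_H$ is never faithful and the commutator collapses. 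For a \emph{short} root $\gamma=\varepsilon_i$ (which you need anyway, since $\alpha_n$ must lie in $\Supp\gamma$), the root group $U_{-\varepsilon_i}\subset G$ embeds diagonally into $U_{-\varepsilon_i+\varepsilon_0}\times U_{-\varepsilon_i-\varepsilon_0}\subset\hat G$ and is \emph{not} $\hat T$--stable: conjugation by $h$ scales the two factors by $t$ and $t^{-1}$ respectively, destroying the diagonal. So your displayed commutator identity is simply false for the only roots that could make the argument run.

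There is also a mismatch in your use of $F^m(h)$: the action of $H$ on the first factor is the untwisted $\hat G$--action (the Frobenius twist in the statement refers to the incidence relation $E\subset F^m(W)$, cf.\ the proof of \Cref{gemme acts}), so the relevant element is $g^{-1}hg$, not $g^{-1}F^m(h)g$. With $F^m(h)$ the order drops to $p$ and no contradiction with the depth $p^m$ of the fibre is possible. Once one replaces $F^m(h)$ by $h$ and unpacks the conjugation for a short root honestly, one is led to track the two $D_{n+1}$--root coordinates separately and to test containment in the fibre directly --- which is precisely the explicit computation the paper carries out.
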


\begin{proof}
We keep the notation in coordinates given in \Cref{description BD}. The variety $X$ is the incidence variety of pairs of the form $(E,W)$, where $E \subset k^{2n+1}$ is a totally isotropic subspace of dimension $i$ and $W \subset k^{2n+1}$ is a totally isotropic subspace of dimension $n$. The incidence relation is twisted by the $m$-th iterated Frobenius; that is, $E\subset F^m(W)$. Let us recall that the quotient $G/P^{\alpha_n}$ is isomorphic to the isotropic Grassmannian of $(n+1)$-dimensional totally isotropic subspaces of $k^{2n+2}$. An element $t \in H$ acts on such a subspace by multiplying by $t$ the $0$-th coordinate and by $t^{-1}$ the $(2n+1)$-th coordinate. Restricting to
\[
k^{2n+1} := (e_0+e_{2n+1})^\perp = v_0^\perp,
\]
this action becomes a scaling of the $0$-th coordinate alone. Now, let us consider as base point of the variety $X$ the one given by
    \begin{align*}
    E_0 & = \{ x \in k^{2n+1} \colon x_0 = x_1 = x_{2n}, \, x_{i+1} = \ldots = x_{2n-1} = 0\},\\
    W_0 & = \{ y \in k^{2n+1} \colon y_0 = y_1 = y_{2n}, \, y_{n+1} = \ldots = y_{2n-1} = 0\}.
    \end{align*}
    Consider a generator $t \in \boldsymbol{\mu}_{p^{m+1}}$, so that $s := t^{p^m}$ is a nontrivial element of $ \boldsymbol{\mu}_2$. Thus, 
    \[
    F^m(t \cdot W_0) = \{(sz,z,z_1,\ldots,z_n,0, \ldots, 0, z)\}
    \]
    which does not contain $E_0$ anymore because its first two coordinates are not equal to each other.
\end{proof}

\begin{proposition}
    \label{no mu2 case2}
    Let $p=2$ and $X=G/P$ with $G$ a group of type $G_2$ and 
    \[
    P= P^{\alpha_1} \cap {}_mGP^{\alpha_2}
    \]
    for some $m \geq 1$. Let $H \subset \hat{G} =  \SO_7$ be a copy of $\boldsymbol{\mu}_{p^{m+1}}$ such that $\Lie H = \mathfrak{z}(\Lie \hat{G})$.
    
    Then the action of  $H$ on the product $G/P^{\alpha_1} \times G/{}_mGP^{\alpha_2}$ (standard twisted by the $m$-th Frobenius on the first factor and trivial on the second) does not preserve the variety $X$.
\end{proposition}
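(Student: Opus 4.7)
The plan is to mimic the coordinate calculation from the proof of \Cref{no mu2 case1}: realize $X$ as a concrete incidence variety inside the product $G/P^{\alpha_1} \times G/({}_mGP^{\alpha_2})$, choose an explicit base point, and then exhibit a scheme-valued element of $H$ whose action breaks the defining incidence.

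More precisely, I would fix the embedding $G \subset \hat{G} = \SO_7$ arising from the seven-dimensional standard representation of $G_2$ on the pure imaginary octonions, together with coordinates $y_0, \ldots, y_6$ on $k^7$ and the quadratic form $q = y_0^2 + y_1 y_6 + y_2 y_5 + y_3 y_4$ from \Cref{description BD} with $n=3$. Under these identifications, $G/P^{\alpha_1}$ is the five-dimensional smooth quadric $Q \subset \proj^6$ cut out by $q$, while $G/P^{\alpha_2}$ parametrizes the $G_2$-null two-planes in $k^7$, i.e.\ the two-dimensional subspaces of $k^7$ that are totally isotropic for $q$ and closed under the cross product determined by the $G_2$-invariant three-form. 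Exactly as in \Cref{no mu2 case1}, the inclusion $X \hookrightarrow G/P^{\alpha_1} \times G/({}_mGP^{\alpha_2})$ identifies $X$ with the locus of pairs $([v], E)$ satisfying the Frobenius-twisted incidence $F^m(v) \in E$. Moreover, by the analysis in the proof of \Cref{keylemmaG}, the one-dimensional center $\mathfrak{z}(\Lie \hat{G})$ is the unique line in $\Lie \SO_7$ lifting the trivial $G$-submodule $l \subset \Lie \hat{G}/\Lie G \simeq k^7$, and in characteristic two $l$ corresponds to the ``degenerate'' direction $e_0$ spanning the radical of the bilinear form of $q$; consequently the tautological generator $t \in H(k[t]/(t^{p^{m+1}}-1))$ acts on $k^7$ by a transformation that scales the $y_0$-coordinate non-trivially, while acting trivially on the remaining coordinates up to an overall rescaling that disappears on $\proj^6$.

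The argument then concludes as in \Cref{no mu2 case1}: pick a base point $([v_0], E_0) \in X$ with $v_0 \in E_0$ a $q$-isotropic vector whose $y_0$-coordinate is non-zero and such that the condition $v_0 \in E_0$ is encoded as an equality between the $y_0$-coordinate of $v_0$ and a fixed other coordinate of a suitable basis vector of $E_0$. For instance, one may try $v_0 = e_0 + e_3 + e_4$, which is isotropic in characteristic two, completed by a second basis vector of $E_0$ in the span of $e_1, \ldots, e_6$. Setting $s := t^{p^m}$, a non-trivial element of $\boldsymbol{\mu}_2$, the vector $F^m(t \cdot v_0)$ coincides with $v_0$ except that its $y_0$-component is multiplied by $s \neq 1$, which destroys the coordinate equality characterizing membership in $E_0$, so $(t \cdot [v_0], E_0) \notin X$. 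The main obstacle I anticipate is pinning down a fully explicit model of $G/P^{\alpha_2}$ in characteristic two---in particular, ensuring that the candidate $E_0$ is a genuine $G_2$-null two-plane, not merely a $q$-isotropic two-plane---while keeping the $y_0$-directional coordinate constraint that drives the breaking computation.
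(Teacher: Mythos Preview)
Your overall strategy---realise $X$ as a Frobenius-twisted incidence variety inside $G/P^{\alpha_1}\times G/{}_mGP^{\alpha_2}$, pick an explicit base point, and show that a scheme-valued point of $H$ destroys the incidence---is exactly what the paper does. The gap is in your identification of how $H$ acts on $k^7$.

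You argue that since the $G$-fixed line in $\Lie\SO_7$ maps, under $\Lie\SO_7/\Lie G\simeq k^7$, to the radical direction $e_0$, the group $H$ must act on $k^7$ by scaling the $y_0$-coordinate. This is a non-sequitur: the quotient $\Lie\SO_7/\Lie G\to k^7$ is an isomorphism of $G$-modules, not a description of how elements of $\Lie\SO_7$ act on the standard representation. Concretely, a diagonal element of $\SO_7$ that rescales only the radical coordinate must square to the identity, so the largest multiplicative subgroup acting this way is $\boldsymbol{\mu}_2$, never $\boldsymbol{\mu}_{2^{m+1}}$ for $m\geq 1$. Your ``overall rescaling that disappears on $\proj^6$'' does not help, since $\SO_7$ is already centreless as a group. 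So with your description of $H$ there is simply no element $t$ of order $2^{m+1}$ to feed into the computation, and the argument collapses.

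The paper avoids this by working in explicit octonion coordinates (a fixed basis of the seven-dimensional space of pure octonions, with the maximal tori of $G$ and $\hat G$ written out). There $H$ is realised inside a genuine one-parameter subtorus of $\SO_7$, acting as $\operatorname{diag}(1,1,t,1,t^{-1},1,1)$ in those coordinates; after the $m$-th Frobenius twist the two middle-adjacent coordinates pick up the factor $s=t^{2^m}\in\boldsymbol{\mu}_2$ while the radical coordinate is untouched. The base line $l_0$ and the two-plane $E_0$ are then written as explicit spans of sums of octonion basis vectors, and the $G_2$-null condition on $E_0$ (your acknowledged obstacle) is checked directly by multiplying octonions. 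The breaking of the incidence then comes from $s$ appearing on two of the three nonzero coordinates of $F^m(t\cdot l_0)$ but not the third---not from any scaling of the radical direction. If you want to repair your argument, you need to locate $H$ inside such a subtorus and choose $l_0\subset E_0$ so that the relevant nonzero coordinates straddle the entries moved by that torus; the octonion model is the cleanest way to do both at once.
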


\begin{proof}
The proof goes through in the analogous way as the one of \Cref{no mu2 case1}. Here, the parabolic subgroup $P^{\alpha_1}$ is the stabilizer of a line in the seven-dimensional vector space $V$ of pure octonions, thus the quotient $G/P^{\alpha_1}$ is simply the isotropic Grassmannian of lines under the $\SO_7$-action. The incidence relation here is of the form $F^m(l) \subset E$, where $l \subset V$ is one-dimensional and $E \subset V$ is two-dimensional, and on both of them the product of octonions is identically zero. In order to perform the computation, let us fix some coordinates; more details on the Chevalley embedding of $G_2$ can be found in \cite[Appendix, Section 4]{Maccan}. Let us summarize what we need from such a description: we can arrange $V$ using the standard basis of the octonions
\[
V = kf_{12} \oplus kf_{11} \oplus ke_{12} \oplus k(e_{11}+e_{22}) \oplus ke_{21} \oplus kf_{22} \oplus kf_{21}.
\]
This way, we choose the maximal torus of $G$ and $\hat{G}$ to be respectively
\[
\Gm^2 \simeq T = \diag(a,a^{-1}b^{-1}, a^2b, 1,a^{-2}b^{-1},ab,a^{-1}) \subset \hat{G} = \diag(t_1,t_2,t_3,1,t_3^{-1}, t_2^{-1},t_1^{-1}).
\]
A basis of simple roots of $G$ is given by $\alpha_1(a,b) = a$ and $\alpha_2(a,b) = b$, while a basis of simple roots for $\hat{G}$ is given by $\beta_1(t) = t_1t_2^{-1}$, $\beta_2(t) = t_2t_3^{-1}$ and $\beta_3(t) = t_3$. With a slight change of notation from \Cref{description BD}, we set the quadratic form of $\SO_7$ as being $x_3^2 + x_2x_4 + x_1x_5 + x_0x_6$. As base points for the homogeneous varieties that we are considering, we can take
\[
l_0 = \{ (0,0,x,x,x,0,0)\colon x \in k\} = k (e_{11} +e_{22}+ e_{12} + e_{21}),
\]
and by a direct computation one sees that its generator has square zero with respect to the product of octonions. As a two dimensional vector space, we can take
\[
E_0 = \{ (z,z,y,y,y,z,z) \colon y,z \in k\} = k(e_{11}+e_{22}+ e_{12}+ e_{21})\oplus k(f_{11}+f_{22}+f_{21}+f_{12}),
\]
on which again one may check that the product of octonions is zero. Now, an element $t \in H$ acts via the simple root $\beta_3$ and a Frobenius twist meaning that
\[
F^m(t \cdot l_0) = \{ (0,0,sx,x,sx,0,0) \colon x \in k \}, \quad s = t^{p^m} \in \boldsymbol{\mu}_2.
\]
In particular, since the action of $H$ is assumed to be trivial on the subspace $E_0$, the incidence relation is not preserved by the action of $H$.
\end{proof}

\begin{proposition}
    \label{no mu2 case3}
    Let $p=2$ and $X=G/P$ with $G$ of type $B_n$ and 
    \[
    P = G/P^{\alpha_n} \cap {}^mNP^{\alpha_i},
    \]
    for some $m \geq 1$ and some $1\leq i <n$. Let $H \subset \hat{G} = \PSO_{2n+2}$ be a copy of $\boldsymbol{\mu}_{p^{m+1}}$ such that $\Lie H = \mathfrak{z}(\Lie \hat{G}).$ Then the action of $H$ on the product $G/P^{\alpha_n} \times G/{}^m NP^{\alpha_i}$ (standard twisted by the $m$-Frobenius on the first factor and trivial on the second) does not preserve the variety $X$.
\end{proposition}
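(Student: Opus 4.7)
The plan is to follow the strategy of \Cref{no mu2 case1} and \Cref{no mu2 case2}, with the iterated Frobenius $F^m$ replaced throughout by the composite isogeny $F^m_{\overline{G}} \circ \pi_G$, whose kernel is ${}^m N$ by definition. In the coordinates of \Cref{description BD}, the quotient identifies as
\[
G/{}^m N P^{\alpha_i} \;\simeq\; \overline{G}^{(m)}/\overline{P}^{(m)},
\]
where $\overline{P} \subset \overline{G} = \Sp_{2n}$ is the image of $P^{\alpha_i}$ under the very special isogeny. This is the isotropic Grassmannian of $i$-dimensional subspaces of $(v_0^\perp/k v_0)^{(m)} \simeq (k^{2n})^{(m)}$ that are totally isotropic for the symplectic form induced by the polarization of the quadratic form.

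Accordingly, I would embed $X$ as the incidence subvariety of $G/P^{\alpha_n} \times G/{}^m N P^{\alpha_i}$ cut out by the relation
\[
E \;\subset\; (F^m_{\overline{G}} \circ \pi_G)(W),
\]
where the right-hand side is understood as the image of $W$ under the composite map on the underlying vector space representations. I would then take base points $W_0$ and $E_0$ analogous to those of \Cref{no mu2 case1}, tailored so that the defining coordinate relations of $W_0$ involve a short-root direction, on which $\pi_G$ acts as a Frobenius twist. For a generator $t$ of $H \simeq \boldsymbol{\mu}_{2^{m+1}}$, the scaling by $t$ of the 0-th coordinate of $W_0$ propagates through the composite isogeny $F^m_{\overline{G}} \circ \pi_G$ as a scaling by $s := t^{2^m} \in \boldsymbol{\mu}_2 \setminus \{1\}$, breaking the coordinate equalities that define the inclusion $E_0 \subset (F^m_{\overline{G}} \circ \pi_G)(W_0)$. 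Exactly as in \Cref{no mu2 case1}, this yields $E_0 \not\subset (F^m_{\overline{G}} \circ \pi_G)(t \cdot W_0)$, so that the $H$-action does not preserve $X$.

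The main obstacle is to make precise the effect of $\pi_G$ on the underlying $(2n+1)$-dimensional representation of $\SO_{2n+1}$: as recalled in \Cref{sec parabolic groups}, in characteristic two $\pi_G$ acts as Frobenius on the short root subgroups and as an isomorphism on the long ones. The key technical point is to track how this asymmetry, combined with the $m$-th Frobenius, transforms the $H$-scaling of the 0-th coordinate of $v_0^\perp$ into a nontrivial scaling factor in the target space $(k^{2n})^{(m)}$, rather than being absorbed by the projection onto the radical quotient. Once this linear-algebraic description is in place, the coordinate computation proceeds verbatim as in \Cref{no mu2 case1}, and no new conceptual ingredient beyond those of the two preceding propositions is required.
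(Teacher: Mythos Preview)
Your outline is reasonable, but it leaves precisely the hard step unresolved: making explicit how the composite $F^m_{\overline{G}}\circ\pi_G$ acts on subspaces and checking that the $H$-scaling is not absorbed by the quotient $v_0^\perp \to v_0^\perp/kv_0$. You acknowledge this as ``the main obstacle'' and ``the key technical point'' without carrying it out, so as written this is a plan rather than a proof.

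The paper sidesteps this obstacle by a different device. Rather than working with the very special isogeny explicitly, it lifts everything to the ambient $k^{2n+2}$ on which $\hat G=\PSO_{2n+2}$ acts. The first factor $G/P^{\alpha_n}$ is realized as the Grassmannian of $(n{+}1)$-dimensional isotropic $\widetilde W\subset k^{2n+2}$, and the second factor $G/{}^mNP^{\alpha_i}$ is reinterpreted inside $v_0^\perp$ as the space of $(i{+}1)$-dimensional isotropic subspaces $E\subset v_0^\perp$ that \emph{contain} $v_0$. Passing to the quotient by $kv_0$ is exactly what implements $\pi_G$, so the requirement $v_0\in E$ already encodes the very special isogeny: the incidence condition on $X$ becomes simply $E\subset F^m(\widetilde W)\cap v_0^\perp$, involving only $F^m$. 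One then chooses coordinate base points $\widetilde W_0$ and $E_0$ in $k^{2n+2}$ and repeats the computation of \Cref{no mu2 case1} verbatim.

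Compared with your approach, the paper's trick trades an analysis of $\pi_G$ on representations for an elementary bookkeeping of subspaces in the larger orthogonal space; this is what makes the argument as short as the two preceding propositions. Your route would also work once the linear-algebraic description of $\pi_G$ is supplied, but that is exactly the step you have not written down.
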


\begin{proof}
Here we perform an analogous computation to the one in the proof of \Cref{no mu2 case1}, with a slight modification.Recall that we see $G \subset \PSO_{2n+2}$ as the stabilizer of the non-isotropic line $kv_0$. Moreover, the first factor $G/P^{\alpha_n}$ parametrized the $(n+1)$-dimensional totally isotropic subspaces $\widetilde{W} \subset k^{2n+2}$, which correspond to $n$-dimensional totally isotropic subspaces in $k^{2n+1} = v_0^\perp$ by the bijection
\[
W = \widetilde{W}\cap v_0^\perp.
\]
On the other hand, the second factor $Y= G/ {}^mN P^{\alpha_i}$ is isomorphic to the Lagrangian grassmannian of subspaces of dimension $i$, inside the symplectic quotient $k^{2n}$. In order to work in the ambient $k^{2n+2}$ on which the $H$-action is defined, let us see $Y$ as the space parametrizing all those $(i+1)$-dimensional subspaces $E \subset v_0^\perp$ which contain $v_0$ and are moreover isotropic with respect to the quadratic form. The incidence relation defining the subvariety $X$ is thus
\[
E \subset F^m(\widetilde{W}) \cap v_0^\perp.
\]
Considering once again the central $H = \boldsymbol{\mu}_{2^{m+1}}$ with generator the variable $t$, let us choose as base point the one given by
\[
\widetilde{W}_0 :=  ke_1\oplus \cdots \oplus ke_{n+1} \subset k^{2n+2} \quad \text{and} \quad E_0 := kv_0 \oplus ke_1\oplus \cdot \oplus ke_i \subset v_0^\perp.
\]
By performing the same computation as in \Cref{no mu2 case1}, we see that $E_0$ is not contained in $F^m(t\cdot \widetilde{W}_0) \cap v_0^\perp$, which means that the incidence relation is not preserved by the $H$-action.
\end{proof}

\begin{lemma}
\label{KequalsN}
    Assume that the very special isogeny of $G$ is defined.  Let $(G,\alpha)$ be exceptional and denote as $\hat{G}$ the automorphism group of $G/P^\alpha$. Let $K$ be an infinitesimal subgroup of $\hat{G}$ that is normalized by $G$ and that has Lie algebra $\Lie K = \Lie N$. Then $K = N$.
\end{lemma}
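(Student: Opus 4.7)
The plan is to first show that $K$ is contained in $G$, and then to conclude by invoking the classification of infinitesimal normal subgroup schemes of a simple adjoint group.

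Consider the closed connected subgroup scheme $H \defeq G \cdot K$ of $\hat{G}$; this is well defined because $G$ normalizes $K$. Its Lie algebra is $\Lie H = \Lie G + \Lie K = \Lie G + \Lie N = \Lie G$, where the last equality uses $\Lie N \subseteq \Lie G$. Passing to the reduced subgroup, $H_{\text{red}}$ is smooth connected, contains $G$, and has Lie algebra sandwiched as $\Lie G \subseteq \Lie H_{\text{red}} \subseteq \Lie H = \Lie G$. A dimension count therefore gives $H_{\text{red}} = G$. Now $\dim_k \Lie H = \dim_k \Lie G = \dim G = \dim H_{\text{red}} = \dim H$, so $H$ is smooth at the identity, hence everywhere by group-scheme translation, hence reduced. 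This yields $H = H_{\text{red}} = G$, and in particular $K \subseteq G$.

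With this inclusion in hand, $K$ becomes a normal infinitesimal subgroup of the simple adjoint group $G$, the normality following from the fact that $G$ normalizes $K$ inside $\hat{G}$. Because $G$ is adjoint, any isogeny with source $G$ has adjoint target with trivial center, so no nontrivial central contribution can occur, and the classification recalled in \Cref{minimale} shows that $K$ coincides with the kernel of one of the noncentral isogenies in the chain (\ref{nocentral}); that is, $K$ lies in the list $1, N, {}_1G, {}^1N, {}_2G, \ldots$. The trivial subgroup has zero Lie algebra, and every group in this list starting from ${}_1G$ has Lie algebra equal to $\Lie G$, which strictly contains $\Lie N$. The assumption $\Lie K = \Lie N$ therefore forces $K = N$.

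The delicate point is the passage $H = H_{\text{red}}$: it uses the criterion that a finite-type scheme is smooth at a point precisely when its Zariski tangent space there has dimension equal to the local dimension, and that smoothness then propagates to the entire group scheme by translation. This is the only place where the inclusion $\Lie K = \Lie N \subseteq \Lie G$ is really exploited.
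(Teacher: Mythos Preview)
Your reduction to showing $K \subseteq G$ is the right idea, and once that inclusion is established your second paragraph is correct and essentially matches the paper. The gap is in the first paragraph: the identity $\Lie(G\cdot K) = \Lie G + \Lie K$ is \emph{not} true in general, even when $G$ is smooth and normalizes the infinitesimal subgroup $K$. For a counterexample, take $\hat{G} = \Ga^2$, $G = \Ga \times 0$, and $K = \{(a,a^p) : a^{p^2}=0\}$. Then $G$ normalizes $K$ (everything is abelian), $\Lie K = k\cdot(1,0) = \Lie G$, but $G\cdot K = \Ga \times \boldsymbol{\alpha}_p$ has two-dimensional Lie algebra, strictly larger than $\Lie G + \Lie K$. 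In that example $H_{\text{red}} = G$ holds (as it must, since $K$ is infinitesimal), yet $H$ is not smooth and $K \not\subset G$. So your smoothness argument collapses precisely at the unjustified step $\Lie H = \Lie G + \Lie K$; the inclusion $\Lie G + \Lie K \subset \Lie H$ is all one gets for free.

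The paper circumvents this by a different route: it shows case by case that the connected normalizer $N_{\hat{G}}(\Lie N)^0$ equals $G$, using the $G$-module structure of $\Lie \hat{G}/\Lie G$ together with simplicity of $\Lie \hat{G}$ (or an explicit matrix computation in type $B_n$). Since any group normalizes its own Lie algebra, one then has $K \subset N_{\hat{G}}(K)^0 \subset N_{\hat{G}}(\Lie K)^0 = N_{\hat{G}}(\Lie N)^0 = G$, and the proof concludes as in your second paragraph. If you want to salvage your approach, you would need an independent argument forcing $K \subset G$ that does not rely on the false Lie-algebra identity; one option is to show directly that $N_{\hat G}(\Lie N)^0 = G$, which is exactly what the paper does.
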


\begin{proof}
It suffices to show that the connected component of the normalizer satisfies $N_{\hat{G}}(\Lie N)^0 = G$. Indeed, this implies
    \[
    K \subset N_{\hat{G}}(K)^0 \subset N_{\hat{G}}(\Lie K )^0 = N_{\hat{G}}(\Lie N)^0 = G,
    \]
    so that $K$ is an infinitesimal normal subgroup of $G$ with Lie algebra equal to $\Lie N$. By the factorisation of isogenies (\Cref{minimale}), this forces $K = N$ and we are done. So let us analyze the three exceptional situation separately.

\smallskip

    \textsf{First case.} Let $p=2$ and consider $G = \PSp_{2n} \subset \hat{G} = \PGL_{2n}$ and define $S := N_{\hat{G}}(\Lie N)^0$. We know that $S$ contains $G$ so it suffices to prove that this inclusion is an equality. Let us assume $S \supsetneq G$, then the Lie algebra of $S$ should be a $G$-module satisfying
    \[
    \Lie G \subsetneq \Lie S \subset \Lie \PGL_{2n}.
    \]
    Once again, the quotient $\Lie \hat{G}/\Lie G$ being an irreducible $G$-module, we have that this implies $\Lie S = \Lie \hat{G}$. But this would imply that $\Lie N$ is a Lie ideal of the full Lie algebra $\Lie \hat{G}$, which cannot be the case because this Lie algebra is simple. Thus $\Lie S = \Lie G$ which forces $S=G$.

    \smallskip

\textsf{Second case.} Let $p=2$ and $G = \SO_{2n+1}$, and let us keep the notation of \Cref{description BD}. By an analogous computation as the one of \cite[Example 1.15]{Maccan}, we can identify
 \begin{align*}
     \Lie N & = \{a_{ij} = 0, \, \text{except for } i \in \{0,2n+1\} \text{ and } 1 \leq j \leq 2n, \, a_{0,j} = a_{2n+1,j} \}\\
     & \simeq kv_0 \wedge \left( \oplus_{j = 1}^{2n} ke_j \right) \simeq k^{2n}
 \end{align*}
 where we make the identification $\Lie \hat{G} = \Lie \SO_{2n+2} = \Lambda^2 (k^{2n+2}).$ As in the previous case, in order to conclude it is enough to show that 
 \[
 N_{\SO_{2n+2}} (\Lie N)^0 \subset \SO_{2n+1}.
 \]
 Now, let $g= (m_{ij})_{i,j=0}^{2n+1}$ be an element of $\SO_{2n+2}$ and we assume that $g$ stabilizes $\Lie N$. Then it suffices to show that $g \cdot v_0$ is a scalar multiple of $v_0$, that is
 \begin{align*}
     & m_{0,0} + m_{0,2n+1} = m_{2n+1,0} + m_{2n+1,2n+1}; & \\
     & m_{k,0} + m_{k,2n+1} = 0, & \text{ for all } 1 \leq k \leq 2n.
 \end{align*}
A direct computation by multiplying the corresponding vectors by $g$ shows that $g \cdot (v_0 \wedge e_j) = g\cdot v_0 \wedge g \cdot e_j$ has as coefficient in front of $e_k \wedge e_l$ exactly
\[
(m_{k,0} + m_{k,2n+1})m_{l,j} + (m_{l,0} + m_{l,2n+1}) m_{kj},
\]
where $1 \leq k,j,l \leq 2n$ and $k \neq l$. Since we are asking $g$ to stabilize $\Lie N$, all these coefficients must vanish, and setting $\lambda_k := m_{k,0} + m_{k,2n+1}$ we get
\[
\lambda_k m_{lj}= \lambda_l m_{kj}. 
\]
Now let us assume that there is some $k_0$ such that $\lambda_{k_0} \neq 0$, then setting $l = k_0$, for all $k \neq k_0$ and for any $j$ we have that
\[
m_{kj} = \frac{\lambda_k}{\lambda_{k_0}} m_{k_0,j}
\]
so that the $j$-th column $(m_{kj})_{k = 1}^{2n}$ is a scalar multiple of the vector $(\lambda_k)_k$. This holds for all $j$ and thus this implies that the submatrix $(m_{ij})_{i,j=1}^{2n}$ of $g$ has rank at most equal to one. This contradicts the fact that $g$ is invertible, hence we conclude that all $\lambda_k = m_{k,0} + m_{k,2n+1}$ must vanish. Next, for any $j$, the fact that $g \cdot (v_0 \wedge e_j) \in \Lie N$ implies that its coefficient with respect to $e_0 \wedge e_l$ and the one with respect to $e_{2n+1}\wedge e_l$ must be equal for all $1 \leq l \leq 2n$. By a direct computation we see that this means
\[
(m_{0,0} + m_{k,2n+1})m_{lj} + \lambda_l m_{0,j} = (m_{0,2n+1} + m_{2n+1,2n+1})m_{lj} + \lambda_l m_{2n+1,j}.
\]
and thus by choosing some $m_{lj} \neq 0$ and using the step above which says $\lambda_l = 0$ we get $m_{0,0} + m_{k,2n+1} = m_{0,2n+1} + m_{2n+1,2n+1}$ and we are done.

    
\smallskip

    \textsf{Third case.} Let $p=3$ and consider $G$ to be a group of type $G_2$ embedded into $\hat{G}= \SO_7$. As in the previous cases, it suffices to show that $S:= N_{\SO_7}(\Lie N)^0 = G$. Assume that the inclusion $G \subset S$ is instead a strict inclusion. Then $G$ acts on the Lie algebra of $S$, which satisfies
    \[
    \Lie G \subsetneq \Lie S \subset \Lie \SO_7 = \Lie G \oplus W,
    \]
    where $W \simeq V(\varpi_1)$ is an irreducible $G$-module. Thus the inclusion being strict implies that $\Lie S = \Lie SO_7$. However, this means that $\Lie N$ is an ideal inside the Lie algebra of $\SO_7$, which is a contradiction with the fact that such a Lie algebra is simple. So we conclude that $\Lie S = \Lie G$ and in particular that the stabilizer must coincide with $G$. 
\end{proof}


\section{The connected automorphism group}
\label{sec automorphism}
The goal of this section is to precisely characterize those among the non-reduced parabolics which make \(\underline{\Aut}_X^0\) non-reduced, as well as to determine the exact structure of the corresponding group scheme. We exclude the exotic parabolic subgroups in type $G_2$ and characteristic two.  

 \subsection{The reduced automorphism group} \Cref{demazure77} addresses all projective homogeneous varieties of Picard rank one, except for the unique exotic variety with stabilizer \(\mathcal{Q}_2\). If \(X\) is a projective homogeneous variety of Picard rank at least two under the action of a simple adjoint group, then by (\ref{intersezione}) it can be written as \(X = G/P\), where \(G\) is simple adjoint and
\[
P = P_J \cap (\ker \xi) P^\prime.
\]
Here, \(\xi\) denotes a non-central isogeny with source \(G\); specifically, \(\xi\) is either an iterated Frobenius morphism or the composition of an iterated Frobenius with a very special isogeny, when such an isogeny exists. Moreover, \(\xi\) is assumed to be minimal (with respect to inclusion) among all isogenies satisfying this expression for \(P\). This minimality assumption is useful for interpreting \(\ker \xi\) in terms of contractions of Schubert curves. For the moment, the exotic cases of Picard rank two in type \(G_2\) with \(p=2\) are excluded and will be treated separately in \Cref{G2 rank two}.

\begin{remark}
\label{rem: beta not exceptional}
Notice that if $(G,\alpha)$ is exceptional, then for any other simple root $\beta \neq \alpha$ one has that $(G,\beta)$ is not exceptional, which means \[
\underline{\Aut}^0_{G/{}_mGP^\beta} = \underline{\Aut}^0_{G/P^\beta}=G^{(m)}\]
for all $m \geq 0$. Moreover, if the very special isogeny of $G$ is defined, namely for $(\PSp_{2n},\alpha_1)$ or $(\SO_{2n+1},\alpha_n)$ when $p=2$ and for $(G_2,\alpha_1)$ when $p=3$, then again for any simple root $\beta \neq \alpha$ one has that $(\overline{G}, \overline{\beta})$ is not exceptional, which means that
\[
\underline{\Aut}^0_{G/{}^mNP^\beta} = \underline{\Aut}^0_{\overline{G}/P^{\overline{\beta}}}=\overline{G}^{(m)}
\]
for all $m\geq 0$. 
\end{remark}

\begin{lemma}
\label{lem: reduced aut}
Consider $X=G/P$ with $P = P_J \cap (\ker \xi)P^\prime$ a non-reduced parabolic subgroup of the above form, or let $X = G/\mathcal{Q}_2$ the exotic $G_2$-variety of Picard rank one. Then
\[
(\underline{\Aut}_X^0)_{\text{red}} = G.
\]
\end{lemma}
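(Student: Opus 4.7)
Let $H \defeq (\underline{\Aut}_X^0)_{\text{red}}$, a smooth connected algebraic group containing $G$; since both are smooth and connected, it suffices to show $\Lie H = \Lie G$.

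For the exotic case $X = G/\mathcal{Q}_2$ (Picard rank one), a strict inclusion $G \subsetneq H$ would, by \Cref{demazure77} and the classification of semisimple groups properly containing $G_2$, force $H = \hat{G} = \SO_7$ and $X \simeq \SO_7/P^{\beta_1}$, the smooth $5$-dimensional quadric. But by \cite{Maccan}, $G/\mathcal{Q}_2$ is realized as a general hyperplane section of the $6$-dimensional Lagrangian Grassmannian, hence is not isomorphic to this quadric, a contradiction.

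For the generic case $X = G/P$ with $P = P_J \cap (\ker \xi) P'$, each contraction $f_\alpha \colon X \to Y_\alpha = G/Q^\alpha$ from \eqref{f_alpha} is characterized by the Picard-theoretic property of contracting precisely the Schubert curves not numerically proportional to $C_\alpha$. Since $H$ is connected, it fixes the class $[D_\alpha]$, so each $f_\alpha$ is $H$-equivariant by Blanchard's \Cref{blanchard}, yielding $\rho_\alpha \colon H \to (\underline{\Aut}^0_{Y_\alpha})_{\text{red}}$. By \Cref{thm: classification higher}, $P = \bigcap_\alpha Q^\alpha$, so the combined morphism $\Phi \defeq \prod_\alpha f_\alpha \colon X \hookrightarrow \prod_\alpha Y_\alpha$ is a closed immersion; since $H$ acts faithfully on $X$, the product homomorphism $\prod \rho_\alpha$ is injective. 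By \Cref{minimale} and Demazure's \Cref{demazure77}, $(\underline{\Aut}^0_{Y_\alpha})_{\text{red}}$ is (isogenous to) $G$ if $\alpha$ is non-exceptional, or $\hat{G}$ if $\alpha$ is exceptional.

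If $\alpha \in \Delta \backslash I$ is non-exceptional, the surjection $\rho_\alpha \colon H \to G$ identifies the image along the isogeny. For each exceptional $\alpha$, the image $\rho_\alpha(\Lie H) \subset \Lie \hat{G}$ is a $G$-stable Lie subalgebra containing $\Lie G$; the analysis of \Cref{keylemmaG} (respectively \Cref{keylemmaN} when $\xi_\alpha$ involves the very special isogeny), combined with the simplicity of the $G$-module $\Lie \hat{G}/\Lie G$, leaves only two options: $\rho_\alpha(\Lie H) = \Lie G$, or, in characteristic $2$ for type $B_n$ or $G_2$, a one-dimensional enlargement by the central subalgebra $\mathfrak{z}(\Lie \hat{G})$. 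This central enlargement would integrate to a $\boldsymbol{\mu}_{p^{m+1}} \subset \hat{G}$ acting on $Y_\alpha$ while the other factors $Y_\beta$ ($\beta \neq \alpha$) see only the trivial extension of the $G$-action. Propositions \ref{no mu2 case1}, \ref{no mu2 case2}, \ref{no mu2 case3} rule out precisely this possibility: one of them applies, according to the type of $G$ and whether $\ker \xi_\alpha$ equals ${}_mG$ or ${}^mN$, to show that the corresponding $\boldsymbol{\mu}_{p^{m+1}}$-action does not preserve the closed subvariety $X \subset \prod_\alpha Y_\alpha$. Hence $\rho_\alpha(\Lie H) = \Lie G$ for all $\alpha$, so by the injectivity of $\prod \rho_\alpha$ we conclude $\Lie H = \Lie G$ and $H = G$.

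The main obstacle is aligning \Cref{keylemmaG}/\Cref{keylemmaN} with the correct one among \Cref{no mu2 case1}--\Cref{no mu2 case3} for each exceptional configuration $(G, \alpha, \xi_\alpha)$: the three propositions collectively cover the combinations permitted by \Cref{th: classification parabolic subgroups} ($G$ of type $B_n$ or $G_2$ in characteristic $2$, with $\ker \xi_\alpha$ either ${}_mG$ or ${}^mN$), and one must check that each kernel--type pair falls under exactly one of them.
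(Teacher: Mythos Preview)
Your argument for the Picard rank $\geq 2$ case has a genuine gap and misapplies several results from the paper.

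First, when you analyze the possible values of $\rho_\alpha(\Lie H)$ for an exceptional $\alpha$, you assert there are ``only two options'': either $\Lie G$ or $\Lie G \oplus \mathfrak{z}(\Lie\hat G)$. You have omitted the possibility $\rho_\alpha(\Lie H) = \Lie\hat G$ itself, which is certainly a $G$-stable Lie subalgebra containing $\Lie G$. This is precisely the case $H = \hat G$ that the lemma is supposed to exclude, so you have not dealt with the main obstruction. Lemmas \ref{keylemmaG} and \ref{keylemmaN} are about subalgebras $\mathfrak{k}$ with $\mathfrak{k}\cap\Lie G$ equal to $0$ or $\Lie N$; they are designed for the \emph{infinitesimal} analysis later in the paper and do not directly classify submodules \emph{containing} $\Lie G$.

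Second, your proposed exclusion of the ``central enlargement'' via Propositions \ref{no mu2 case1}--\ref{no mu2 case3} is incorrect. Those propositions exhibit a particular $\boldsymbol{\mu}_{p^{m+1}}\subset\hat G$ whose twisted action does not preserve $X$; they are used in \Cref{rmk no m2} to control the \emph{infinitesimal} kernel $K/{}_m\hat G$. But $H$ is by definition smooth, so an extra one-dimensional piece in $\Lie H$ would integrate to a one-dimensional smooth subgroup (a $\Gm$ or $\Ga$), never to a $\boldsymbol{\mu}_{p^{m+1}}$. The propositions simply do not apply here.

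The paper's proof is both shorter and avoids these issues. It uses two facts you do not invoke: (i) $(\underline{\Aut}_X^0)_{\text{red}}$ is automatically a \emph{simple adjoint} group (stated at the start of \Cref{sec Demazure}); and (ii) $G$ is \emph{maximal} among semisimple subgroups of $\hat G$ (Seitz). Together these force $H\in\{G,\hat G\}$ immediately, and the case $H=\hat G$ is then excluded by a clean dimension count: the contraction $f_\beta$ to $G/(\ker\xi)P^\beta$ with $\beta\neq\alpha$ induces a nontrivial homomorphism from the simple group $\hat G$ to $G^{(m)}$ or $\overline G^{(m)}$, which must be an isogeny, contradicting $\dim\hat G>\dim G$. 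No Lie-algebra submodule analysis and none of Propositions \ref{no mu2 case1}--\ref{no mu2 case3} are needed for this lemma. For the $\mathcal{Q}_2$ case, the paper simply cites that $G/\mathcal{Q}_2$ is not of the form $G'/P^\alpha$ for any simple adjoint $G'$, which is what is actually needed; your claim that $H\supsetneq G$ would force $H=\SO_7$ is not justified.
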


\begin{proof}
We first deal with the case of Picard rank at least two, and treat the case of $\mathcal{Q}_2$ later. Let $H = (\underline{\Aut}_X^0)_{\text{red}}$ and consider the contraction $f \colon X \to G/P_J$. First, let us assume that $P_J$ is not of the form $P^\alpha$ for any $\alpha$ simple root such that $(G,\alpha)$ is exceptional in the sense of Demazure. Then the map
\[
f_\ast \colon H \longrightarrow G = \underline{\Aut}^0_{G/P_J}
\]
is a section of the natural map $G \to H$ given by the $G$-action on $X$. In particular, both source and target being simple groups, this means that $f_\ast$ is an isogeny. The group $H$ then writes as the semi-direct product of $G$ and $\ker f_\ast$. Since $H$ is by construction smooth and connected, the finite subgroup $\ker f_\ast$ is both connected and étale, so it has to be trivial and we can conclude that $H=G$. Next, let us assume that $P_J = P^\alpha$ is exceptional. Then the composition
\[
G \longrightarrow H \stackrel{f_\ast}{\longrightarrow} \hat{G} = \underline{\Aut}^0_{G/P^\alpha}
\]
is the inclusion of $G$ into $\hat{G}$. In particular, the image $\hat{H}$ of $f_\ast$ is a simple group of adjoint type satisfying $G \subset \hat{H} \subset \hat{G}$. In all cases, namely $\PSp_{2n} \subset \PGL_{2n}$, $\SO_{2n+1} \subset \SO_{2n+2}$ and $G_2 \subset \SO_7$, the only possibilities are $\hat{H}= G$ and $\hat{H} = \hat{G}$. This is because $G$ is maximal with respect to inclusion among the semisimple subgroups of $\hat{G}$: \cite{seitz}. 
Reasoning in the same way as in the previous case, we find that $H \in \{G, \hat{G}\}$, so in order to conclude it is enough to show that $H = (\underline{\Aut}^0_X)_{\text{red}} \neq \hat{G}$.  Let us assume that $\hat{G}$ acts faithfully on $X$. Next, consider a simple root $\beta$ such that $\langle P,P^\beta\rangle = (\ker \xi) P^\beta$. Let us consider the associated contraction 
\[
f_\beta \colon X \longrightarrow G/(\ker \xi)P^\beta ;
\]
the target of $f_\beta$ is either of the form $G/{}_mGP^\beta$ for some $m\geq 1$ or, in the cases where the very special isogeny exists, to $G/{}^mN P^\beta$ for some $m\geq 0$. In both cases, by \Cref{rem: beta not exceptional} we know that its automorphism group is either $G^{(m)}$ or $\overline{G}^{(m)}$. The underlying varieties - that is, forgetting the structure of homogeneous spaces - are respectively isomorphic to $G/P^\beta$ or to $\overline{G}/P^{\overline{\beta}}$, where $\overline{\beta}$ is the associated simple root to $\beta$ via the very special isogeny $\pi_G \colon G \to \overline{G}$. By applying \Cref{blanchard} to $f_\beta$, we get
\begin{center}
    \begin{tikzcd}
        F^m_G \colon G \arrow[r]  & H = \hat{G} \arrow[rr, "(f_\beta)_\ast"] && G^{(m)}
    \end{tikzcd}
\end{center}
in the first case, and 
\begin{center}
    \begin{tikzcd}
        F^m_{\overline{G}}\circ \pi_G \colon G \arrow[r] & H = \hat{G} \arrow[rr, "(f_\beta)_\ast"] && \overline{G}^{(m)}
    \end{tikzcd}
\end{center}
in the second, where the first arrow is the natural map given by the $G$-action on $X$. In both cases, the homomorphism $(f_\beta)_\ast$ has to be faithfully flat, thus an isogeny (because $\hat{G}$ is simple and its action on the target is not trivial). However, $\hat{G}$ does not have the same dimension as $G$ and $\overline{G}$, which gives a contradiction. From this, we can conclude that $H = (\underline{\Aut}^0_X)_{\text{red}} $ has to be equal to $G$.

Finally, let $G$ be exceptional of type $G_2$ in characteristic $p=2$ and let us consider the variety $ Y = G/\mathcal{Q}_2$. In order to prove that its reduced automorphism group is of type $G_2$, it is enough to notice that $Y$ cannot be written as $G^\prime/P^\alpha$ for any semi-simple adjoint group $G^\prime$ and some simple root $\alpha$. This is proven in \cite[Proposition 2.38]{Maccan} and so the proof is complete.
\end{proof}

\subsection{Dimension of global vector fields}

The discussion above might suggest that the automorphism group is always reduced and therefore always equal to \(G\), but this is not necessarily true. One isolated example in the literature examines this phenomenon: \cite[Proposition 4.3.4]{BSU}. We briefly recall this example, providing a slightly different proof along with a generalization to other cases.

\begin{example}
    Let $G= \Sp_{2n}$ in characteristic $p \geq 3$ and consider $P = P^{\alpha_1} \cap \, {}_1G P^{\alpha_2}$; the corresponding homogeneous variety $X= G/P$ can be described as the twisted incidence variety of pairs $(l,W)$ where $l \subset k^{2n}$ is a line, $W \subset k^{2n}$ is an isotropic $2$-plane and $F(l) \subset W$. Then the natural morphism $\Lie G \to H^0(X,T_X)$ is not surjective; in particular the automorphism group is non reduced and satisfies
    \[
    G_{\text{ad}} = \PSp_{2n} = (\underline{\Aut}_X^0)_{\text{red}} \subsetneq \underline{\Aut}_X^0 \subsetneq \PGL_{2n}.    \]
\end{example}

The key point for having \emph{enough place} for an infinitesimal fattening of the reduced part of the automorphism group is that we have a bigger ambient group $\hat{G}$, acting non-trivially on the target of the contraction with smooth fibers. For this it is necessary that the target is of the form $G/P^\alpha$ with $(G,\alpha)$ being exceptional; see \Cref{aut main} for a more precise statement. 

\begin{lemma}
\label{lem nonred exceptional}
    Let $(G,\alpha)$ be exceptional in the sense of Demazure and consider a parabolic subgroup of $G$ of the form
    \[
    P = P^\alpha \cap Q, \qquad {}_1G \subset Q,
    \]
    where $Q$ is a non-reduced parabolic subgroup containing $U_{-\alpha}$. Then 
    the group $\underline{\Aut}_X^0$ is \emph{non reduced}.
\end{lemma}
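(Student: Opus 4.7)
\emph{Plan.} The goal is to exhibit a linear injection $\Lambda \colon \Lie \hat{G} \hookrightarrow H^0(X, T_X)$ extending the canonical inclusion $\Lie G \hookrightarrow H^0(X, T_X)$ coming from the $G$-action. Since $\Lie \hat{G} \supsetneq \Lie G$, this will give $\dim_k H^0(X, T_X) > \dim G$, i.e. $\dim \Lie \underline{\Aut}_X^0 > \dim G$, and the non-reducedness of $\underline{\Aut}_X^0$ follows at once.

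\emph{Setup.} I consider the two contractions of Schubert curves $f \colon X \to Y := G/P^\alpha$ and $g \colon X \to Z := G/Q$. Because $P = P^\alpha \cap Q$, they assemble into a $G$-equivariant closed immersion $\iota \colon X \hookrightarrow Y \times Z$ for the diagonal $G$-action. Since $(G, \alpha)$ is exceptional, \Cref{demazure77} gives $\underline{\Aut}_Y^0 = \hat{G}$, so each $\xi \in \Lie \hat{G}$ yields a vector field $v_\xi^Y \in H^0(Y, T_Y)$. I then define
\[
\Lambda(\xi) \; := \; \iota^{\ast}\bigl(v_\xi^Y,\,0\bigr),
\]
namely the restriction to $X$ of the vector field on $Y \times Z$ equal to $v_\xi^Y$ on the first factor and zero on the second.

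\emph{Key step: tangency.} The crux is to verify that $(v_\xi^Y, 0)$ is tangent to $X$ everywhere, so that $\Lambda(\xi)$ lands in $H^0(X, T_X)$. At the base point $x_0 = (y_0, z_0)$, the hypothesis ${}_1 G \subset Q$ forces $\Lie Q = \mathfrak{g}$, so the differential of the $G$-action on $Z$ at $z_0$ vanishes. Hence the infinitesimal $G$-action on the product at $x_0$ produces a map $\mathfrak{g} \to T_{x_0}(Y \times Z)$ whose image is exactly $T_{y_0} Y \oplus 0$. Since this image is contained in $T_{x_0} X$, we obtain the inclusion $T_{y_0} Y \oplus 0 \subset T_{x_0} X$, and in particular $(v_\xi^Y(y_0), 0) \in T_{x_0} X$. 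The diagonal $G$-action on $Y \times Z$ preserves $X$ and sends $T_{y_0} Y \oplus 0$ to $T_y Y \oplus 0$ at a translated point $x = g \cdot x_0$; therefore tangency propagates to every point of $X$ by $G$-equivariance.

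\emph{Conclusion and main obstacle.} The composition $\Lie \hat{G} \xrightarrow{\Lambda} H^0(X, T_X) \to H^0(X, f^{\ast} T_Y) = H^0(Y, T_Y) = \Lie \hat{G}$ is the identity, so $\Lambda$ is injective. For $\xi \in \Lie G$, the element $\Lambda(\xi)$ coincides with the vector field coming from the $G$-action on $X$ (whose second component also vanishes), so $\Lambda$ extends the standard inclusion $\Lie G \hookrightarrow H^0(X, T_X)$. The strict inclusion $\Lie G \subsetneq \Lie \hat{G}$ therefore forces $\underline{\Aut}_X^0$ to be non-reduced. The delicate point of the argument is the tangency verification, which depends essentially on the interplay between the faithful $\hat{G}$-action on $Y$ (provided by Demazure's exceptional case) and the vanishing of the $\Lie G$-action on $Z$ (implied by ${}_1 G \subset Q$); once this is in place, the rest is bookkeeping via $G$-equivariance.
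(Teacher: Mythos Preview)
Your argument is correct and follows essentially the same route as the paper: both embed $X$ into $G/P^\alpha \times G/Q$, use ${}_1G \subset Q$ to see that the $\Lie G$-vector fields on $X$ have vanishing $G/Q$-component, and deduce that $f^\ast T_{G/P^\alpha}$ sits inside $T_X$ --- the paper phrases this as a sheaf-level splitting of the relative tangent sequence via the subsheaf $\mathcal{F}\subset T_X$ generated by $\Lie G$, while you phrase it as pointwise tangency of $(v_\xi^Y,0)$ propagated by $G$-equivariance. One small omission: to pass from $\dim H^0(X,T_X) > \dim G$ to non-reducedness of $\underline{\Aut}_X^0$ you need $\dim \underline{\Aut}_X^0 = \dim G$, i.e.\ $(\underline{\Aut}_X^0)_{\text{red}} = G$; this is exactly \Cref{lem: reduced aut}, which the paper invokes explicitly at the outset of its proof and which you should cite as well.
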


\begin{proof}
By \Cref{lem: reduced aut} above, we know that the reduced part of $\underline{\Aut}^0_X$ is equal to $G$, thus it suffices to show that the dimension of $H^0(X,T_X) = \Lie \underline{\Aut}^0_X$ is strictly bigger than the one of $\Lie G$ to conclude. Let us start by the following diagram,
    \begin{center}
        \begin{tikzcd}
            & X=G/P \arrow[rd, "h"] \arrow[ld, "f_\alpha"]  \arrow[rr, hookrightarrow, "f_\alpha\times h"]& & G/P^\alpha \times G/Q\\
            G/P^\alpha & & G/Q &
        \end{tikzcd}
    \end{center}
    where $f_\alpha$ and $h$ are contractions of Schubert curves and $f_\alpha \times h$ is a closed immersion by \Cref{thm: classification higher}. Consider the short exact sequence of sheaves
    \[
    0 \longrightarrow T_\alpha \longrightarrow T_X \longrightarrow f_\alpha^\ast T_{G/P^\alpha} \longrightarrow 0,
    \]
    where $T_\alpha$ denotes the relative tangent sheaf of $f_\alpha$. We claim that it is split: indeed, let $\mathcal{F}$ be the subsheaf of $T_X$ generated by $\Lie G$, where the action of the Lie algebra is via global vector fields. Since $T_{G/P^\alpha}$ is generated by $\Lie G$, then the image of $\mathcal{F}$ generates $f^\ast_\alpha T_{G/P^\alpha}$.  Moreover, $Q$ contains ${}_1G$ by assumption, so $h$ is ${}_1G$-invariant and its differential is $\Lie G$-invariant. Thus, $\mathcal{F}$ is contained in the relative tangent sheaf $T_h$ of $h$. Since $f_\alpha \times h$ is a closed immersion, $T_\alpha \cap T_h =0$, which means that $\mathcal{F} \to f_\alpha^\ast T_{G/P^\alpha}$ is injective as well. So $\mathcal{F}$ provides the required splitting and we have $T_X = T_\alpha \oplus f_\alpha^\ast T_{G/P^\alpha}$. In particular,
    \begin{align*}
    \dim H^0(X,T_X) & = \dim H^0(X,T_\alpha) \oplus \dim H^0(G/P^\alpha, T_{G/P^\alpha}) \geq \dim H^0(G/P^\alpha,T_{G/P^\alpha})\\
    & = \dim \Lie \hat{G} > \dim \Lie G,
    \end{align*}
    which concludes the proof.
\end{proof}

\begin{corollary}
\label{cor : ghat 1}
    With the assumption of \Cref{lem nonred exceptional}, we have 
    \[
    G = (\underline{\Aut}_X^0)_{\text{red}} \subsetneq {}_1 \hat{G} \cdot G \subset \underline{\Aut}_X^0 \subsetneq \hat{G}.
    \]
    In particular, $H^0(X,T_X) = \Lie \hat{G}$.
\end{corollary}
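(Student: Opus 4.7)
By \Cref{lem: reduced aut} the reduced part of $\underline{\Aut}^0_X$ equals $G$, and the splitting $T_X = T_\alpha \oplus f_\alpha^* T_{G/P^\alpha}$ established in the proof of \Cref{lem nonred exceptional} provides a canonical inclusion of $p$-Lie algebras $\Lie \hat G \hookrightarrow H^0(X,T_X) = \Lie \underline{\Aut}^0_X$. My plan is to apply Blanchard's Lemma (\Cref{blanchard}) to the contraction $f_\alpha \colon X \to G/P^\alpha$ to obtain a morphism of algebraic groups $\phi \colon \underline{\Aut}^0_X \to \hat G$ extending the inclusion $G \hookrightarrow \hat G$, and to prove that $\phi$ is a closed immersion. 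The chain of inclusions in the statement will then follow formally.

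Indeed, once $\phi$ is known to be a closed immersion, $\underline{\Aut}^0_X$ sits as a closed subgroup scheme of $\hat G$ strictly smaller than $\hat G$ (since their reduced parts differ), and $\Lie \underline{\Aut}^0_X \subset \Lie \hat G$ combined with the reverse inclusion from \Cref{lem nonred exceptional} yields $H^0(X, T_X) = \Lie \hat G$. The height-one subgroup scheme ${}_1 \underline{\Aut}^0_X \subset \hat G$ then has Lie algebra $\Lie \hat G$, so by uniqueness of height-one subgroups with prescribed $p$-Lie algebra (\Cref{DG height one}) it coincides with ${}_1 \hat G$; combining with $G \subset \underline{\Aut}^0_X$ produces ${}_1\hat G \cdot G \subset \underline{\Aut}^0_X$, and the strictness $G \subsetneq {}_1 \hat G \cdot G$ follows because $\Lie G \subsetneq \Lie \hat G$.

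The main obstacle is showing that $\ker \phi$ is trivial. Its reduced part lies in $G$ and acts trivially on $G/P^\alpha$, so it vanishes by faithfulness of the $G$-action on the flag variety of a simple adjoint group; hence $\ker \phi$ is infinitesimal with Lie algebra $H^0(X, T_\alpha) = \ker d\phi$. Applying \Cref{blanchard} also to the second contraction $h \colon X \to G/Q$ gives $\phi_h \colon \underline{\Aut}^0_X \to \underline{\Aut}^0_{G/Q}$; because $P = P^\alpha \cap Q$, the morphism $(f_\alpha, h) \colon X \hookrightarrow G/P^\alpha \times G/Q$ is a closed immersion, so the product $(\phi, \phi_h)$ is a monomorphism. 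Since $\alpha$ is the unique simple root making $(G,\alpha)$ exceptional and already lies in the Levi of $Q_{\text{red}}$, Demazure's theorem (\Cref{demazure77}) combined with the Frobenius-twist identification of $G/Q$ with a reduced flag variety yields $\underline{\Aut}^0_{G/Q} = G$, so $\phi_h$ embeds $\ker \phi$ into $G$. Consequently $H^0(X, T_\alpha)$ becomes a $p$-Lie subalgebra of $\Lie G$ and a $G$-submodule, hence equals either $0$ or $\Lie G$ by simplicity of the adjoint representation of a simple adjoint group. The latter possibility would force ${}_1 G \subset \ker \phi$ by uniqueness in \Cref{DG height one}, but this contradicts the fact that ${}_1 G$ acts non-trivially on $G/P^\alpha \simeq \hat G/\hat P$: writing $\hat P \subset \hat G$ for the stabiliser of the base point, one has $\Lie G \cap \Lie \hat P = \Lie P^\alpha \subsetneq \Lie G$. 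Thus $H^0(X, T_\alpha) = 0$ and $\phi$ is a closed immersion, completing the proof.
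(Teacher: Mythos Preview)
Your overall strategy—applying \Cref{blanchard} to both contractions $f_\alpha$ and $h$, and using the closed immersion $f_\alpha\times h$ to obtain a monomorphism $(\phi,\phi_h)$—parallels the paper's. However, your deduction that $\ker\phi$ is trivial breaks down at several points. The most serious is the assertion that $G/Q$ is, up to a Frobenius twist, a \emph{reduced} flag variety and hence $\underline{\Aut}^0_{G/Q}=G$ by \Cref{demazure77}. The hypothesis of \Cref{lem nonred exceptional} only requires ${}_1G\subset Q$, not that $Q$ be of the form $(\ker\xi)\,P_J$: for instance $Q={}_1GP^\beta\cap{}_2GP^\gamma$ with $\beta,\gamma\ne\alpha$ satisfies all the assumptions, yet $G/Q\simeq G^{(1)}/(P^\beta\cap{}_1G^{(1)}P^\gamma)$ still has non-reduced stabiliser, and its automorphism group is only determined later in \Cref{aut main}—so invoking it here is circular. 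Even granting that $\underline{\Aut}^0_{G/Q}$ is a Frobenius twist $G^{(m)}$, two further issues arise: your appeal to simplicity of the adjoint representation fails precisely when the very special isogeny exists (e.g.\ $G=\PSp_{2n}$ in characteristic $2$, or type $G_2$ in characteristic $3$), since $\Lie N$ is then a proper nonzero $G$-stable $p$-ideal of $\Lie G$; and your final contradiction conflates the subgroup ${}_1G\subset G\subset\underline{\Aut}^0_X$ with the image of ${}_1(\ker\phi)$ in $G^{(m)}$ under $\phi_h$—knowing $d\phi_h(\Lie(\ker\phi))=\Lie G^{(m)}$ gives $\phi_h({}_1(\ker\phi))={}_1G^{(m)}$, not ${}_1G\subset\ker\phi$.

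The paper's argument is shorter and bypasses these difficulties: it observes directly that if $H:=\ker f_\ast$ were nontrivial, its Frobenius kernel ${}_1H$ would act trivially on both $G/P^\alpha$ and $G/Q$, hence on $X$ via the closed immersion $f_\alpha\times h$, contradicting faithfulness of the $\underline{\Aut}^0_X$-action.
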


\begin{proof}
    It suffices here to show that the kernel of the induced morphism $f_\ast \colon \underline{\Aut}_X^0 \rightarrow \hat{G}$ is trivial. If it is not the case, then $H := \ker f_*$ being nontrivial implies that it has a non-trivial Frobenius kernel. Thus ${}_1H$ acts trivially on both $G/P^\alpha$ and on $G/Q$, and since $f_\alpha \times h$ is a closed immersion, it acts trivially on $X$, contradicting the fact that it is a subgroup of automorphisms of $X$.
\end{proof}

The preceding arguments involving the tangent bundle, in the case where the smooth contraction maps onto a variety associated to an exceptional pair, show that the automorphism group is non-reduced and that its infinitesimal part of height one coincides with the full Frobenius kernel of $\hat{G}$. However, they do not explicitly determine the entire automorphism group. For this reason, a different approach is adopted in \Cref{sec : full aut}.



\subsection{The full connected automorphism group}
\label{sec : full aut}

A first step towards the proof of \Cref{aut main intro} result is to show that, in the case where the maximal contraction with smooth fiber has target $G/P^\alpha$ where $(G,\alpha)$ is exceptional in the sense of Demazure, then a certain Frobenius kernel of the bigger group $\hat{G} = \underline{\Aut}_{G/P^\alpha}^0$ acts on $X$.

\begin{lemma}
\label{gemme acts}
Let $X=G/P$, where $P = P^\alpha \cap (\ker \xi) P^\prime$ with $(G,\alpha)$ exceptional in the sense of Demazure. Let $m \geq 0$ be the integer defined by the condition that $\ker \xi $ contains ${}_mG$ but not ${}_{m+1}G$. Then
\[
{}_m \hat{G} \subset \underline{\Aut}_X^0.
\]
\end{lemma}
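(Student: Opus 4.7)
The plan is to exhibit the action by realising ${}_m\hat{G}$ as a subgroup of the larger group scheme ${}_m\hat{G}\cdot G \subset \hat{G}$, made to act on a product containing $X$. Set $Y \defeq G/(\ker \xi)P'$. The stabilizer of the base point $(e_{P^\alpha}, e_Y)$ of $G/P^\alpha \times Y$ under the diagonal $G$-action is $P^\alpha \cap (\ker \xi)P' = P$, so
\[
f_\alpha \times h \colon X \hookrightarrow G/P^\alpha \times Y
\]
is a $G$-equivariant closed immersion. Since ${}_m\hat{G}$ is normal in $\hat{G}$, the product ${}_m\hat{G}\cdot G$ is a closed subgroup scheme of $\hat{G}$, and I would construct an action of it on $G/P^\alpha \times Y$ that preserves $X$.

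On the first factor, ${}_m\hat{G}\cdot G$ acts by restriction of the canonical $\hat{G}$-action. On $Y$, the hypothesis ${}_mG \subset \ker \xi$ implies that the natural $G$-action factors through $G \twoheadrightarrow G/{}_mG = G^{(m)}$; combined with the isomorphism
\[
({}_m\hat{G} \cdot G)/{}_m\hat{G} \;\simeq\; G/(G \cap {}_m\hat{G}) \;=\; G/{}_mG \;=\; G^{(m)},
\]
this yields an action of ${}_m\hat{G}\cdot G$ on $Y$ in which ${}_m\hat{G}$ acts trivially. Together, these two actions define a well-defined action on the product.

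To show this action preserves $X$, I use that $X$ is the $G$-orbit of $(e_{P^\alpha}, e_Y)$ and that $G$ normalises ${}_m\hat{G}$ inside $\hat{G}$; hence it suffices to verify that $\hat{h}\cdot e_{P^\alpha}$ lies in the fibre $f_\alpha(h^{-1}(e_Y)) = (\ker \xi)P' \cdot e_{P^\alpha} \subset G/P^\alpha$ for every functorial point $\hat{h}$ of ${}_m\hat{G}$. This reduces to the core claim: the ${}_m\hat{G}$-orbit and the ${}_mG$-orbit of $e_{P^\alpha}$ coincide as closed subschemes of $G/P^\alpha$. Writing $\hat{P}^\alpha \subset \hat{G}$ for the stabilizer of $e_{P^\alpha}$, so that $\hat{G}/\hat{P}^\alpha = G/P^\alpha$, these orbits fit into a chain of closed immersions
\[
{}_mG/{}_mP^\alpha \hookrightarrow {}_m\hat{G}/{}_m\hat{P}^\alpha \hookrightarrow G/P^\alpha,
\]
where the two left-hand terms are infinitesimal of the same length $p^{m\dim(G/P^\alpha)}$, namely the scheme-theoretic fibre over $e_{P^\alpha}$ of the $m$-th relative Frobenius of the smooth variety $G/P^\alpha$. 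Consequently the first inclusion is an equality, and since ${}_mG \subset (\ker \xi)P'$ the common orbit lies in the $(\ker \xi)P'$-orbit of $e_{P^\alpha}$.

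The main subtlety will be this length computation, which rests on the identification $G/P^\alpha = \hat{G}/\hat{P}^\alpha$ as varieties (even though $\hat{G} \supsetneq G$) and on the fact that the $m$-th relative Frobenius of a smooth $n$-dimensional variety is finite flat of degree $p^{mn}$; once it is available, the inclusion ${}_m\hat{G} \subset \underline{\Aut}_X^0$ follows by restricting the ${}_m\hat{G}\cdot G$-action from $G/P^\alpha \times Y$ to $X$, noting that ${}_m\hat{G} \to \underline{\Aut}_X^0$ is a closed immersion because its composition with the projection to $\underline{\Aut}_{G/P^\alpha}^0 = \hat{G}$ is the tautological inclusion.
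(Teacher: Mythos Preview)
Your argument is correct and follows the same overall architecture as the paper's: embed $X$ into $G/P^\alpha \times G/(\ker\xi)P'$ via $f_\alpha\times h$, let ${}_m\hat G\cdot G$ act on the product (standard on the first factor, trivially through ${}_m\hat G$ on the second), and check that this action preserves $X$ by reducing to the orbit of the base point. The one place where you genuinely diverge from the paper is in the proof of the core claim that the ${}_m\hat G$-orbit and the ${}_mG$-orbit of $e_{P^\alpha}$ coincide. The paper argues constructively via the open Bruhat cell: since $R_u^-(P^{\hat\alpha}) = R_u^-(P^\alpha)$ inside $\hat G$, any $\hat h\in{}_m\hat G$ factors as $u\cdot p$ with $u\in{}_mR_u^-(P^\alpha)\subset{}_mG$ and $p\in{}_mP^{\hat\alpha}$, so $\hat h\cdot e_{P^\alpha}=u\cdot e_{P^\alpha}$ explicitly. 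You instead give a length count: both orbits are the scheme-theoretic fibre of the $m$-th relative Frobenius of the smooth variety $G/P^\alpha=\hat G/\hat P^\alpha$, hence both have length $p^{m\dim(G/P^\alpha)}$, forcing the inclusion ${}_mG/{}_mP^\alpha\hookrightarrow{}_m\hat G/{}_m\hat P^\alpha$ to be an equality. Your route is slightly more conceptual and uses nothing about the exceptional pair beyond the equality of homogeneous spaces with smooth stabilizers; the paper's route is more explicit and makes visible the element of ${}_mG$ realising the action, which is closer in spirit to how the action is later manipulated in \Cref{lem : 2 and Gm}.
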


\begin{proof}
    First, let $\hat{\alpha}$ denote the simple root of $\hat{G}$ satisfying
    \[
    Y = G/P^\alpha = \hat{G}/P^{\hat{\alpha}}.
    \]
    Let us denote as $R_u^-(-)$ the unipotent radical of the opposite parabolic subgroup; then we have that the open cell of $Y$ is isomorphic to both $R_u^-(P^\alpha)$ and to $R_u^-(P^{\hat{\alpha}})$, which is something that is very specific to these exceptional cases. Taking the $m$-th Frobenius kernel, which is an infinitesimal neighborhood of order $p^m$ of the identity element in $\hat{G}$, we get
    \begin{align}
    \label{gemme hat}
    {}_m\hat{G} = {}_m(R_u^-(P^{\hat{\alpha}}) \cdot P^{\hat{\alpha}}) \subset {}_m R_u^-(P^{\hat{\alpha}}) \cdot {}_m P^{\hat{\alpha}} = {}_mR^-_u(P^\alpha) \cdot {}_m P^{\hat{\alpha}}.
    \end{align}
    Next, let us consider $h \in {}_m \hat{G}$. We want to construct the action in such a way that the contractions $f$ and $h$ are $\hat{G}$- (respectively $G$-) equivariant. So let us consider $X$ embedded into the product $\hat{G}/P^{\hat{\alpha}} \times G/(\ker \xi)P^\prime$ via the closed immersion $f \times h$ and let us define the action on the base point $o \in X$ as being
    \[
    h \cdot o = h \cdot (P^{\hat{\alpha}}, {}_mG P^\beta) := (hP^{\hat{\alpha}}, {}_mGP^\beta).
    \]
    To check that this action is well defined, we use the inclusion (\ref{gemme hat}). In particular it tells us that there is some $u \in {}_mR_u^-(P^\alpha) \subset {}_mG$ such that $hP^{\hat{\alpha}} = uP^{\hat{\alpha}}$, and this implies
    \[
    h \cdot o = (uP^{\hat{\alpha}}, {}_mGP^\beta ) = (uP^{\hat{\alpha}}, u\, {}_mGP^\beta) = u \cdot o.
    \]
    This shows that ${}_m\hat{G}$ acts on the variety $X$. In particular, since $f$ is equivariant and ${}_m\hat{G}$ acts faithfully on $Y$, we can conclude that is also acts faithfully on $X$.
\end{proof}

\begin{lemma}
\label{rmk no m2}
Assume that $(G,\alpha)$ is an exceptional pair and consider an infinitesimal subgroup $K \subset \hat{G}$ that is normalized by $G$, contains the Frobenius kernel ${}_m \hat{G}$ but not ${}_{m+1} \hat{G}$. Assume moreover that $K$ acts faithfully on $X=G/(P^\alpha \cap (\ker \xi)P^\prime)$ and trivially on $G/(\ker \xi) P^\prime$, where $\xi$ is either an $m$-th iterated Frobenius or its composition with the very special isogeny $\pi$. Let $K^\prime := K / {}_m \hat{G}$. Then the following hold.
      \begin{enumerate}[(a)]
          \item If $\Lie K^\prime \cap \Lie G^{(m)} = 0$ and $\xi$ is an $m$-th iterated Frobenius, then $K^\prime = 1$.
          \item If the very special isogeny is defined, $\Lie K^\prime \cap \Lie G^{(m)} = \Lie N_{G^{(m)}}$ and $\xi$ is an $m$-th Frobenius followed by $\pi$, then $K^\prime = N_{G^{(m)}}$.
      \end{enumerate}
\end{lemma}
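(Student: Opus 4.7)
The approach is to reduce to analysing the Lie algebra of $K' := K/{}_m\hat G$ inside $\hat G^{(m)}$, invoke \Cref{keylemmaG} and \Cref{keylemmaN} to bound the possibilities, and then rule out the exceptional ones using the explicit non-existence results \Cref{no mu2 case1}, \Cref{no mu2 case2} and \Cref{no mu2 case3}. Since $K$ is normalized by $G$ and contains ${}_m\hat G$, the quotient $K'$ is an infinitesimal subgroup of $\hat G^{(m)}$ normalized by $G^{(m)}$, so $\mathfrak{k} := \Lie K'$ is simultaneously a $p$-Lie subalgebra of $\Lie \hat G^{(m)} \simeq \Lie \hat G$ and a $G^{(m)}$-submodule. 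The hypothesis $K \not\supset {}_{m+1}\hat G$ translates, via \Cref{DG height one}, into $\mathfrak{k} \subsetneq \Lie \hat G$: were $\mathfrak{k}$ the whole Lie algebra, the height-one subgroup ${}_1 K'$ would coincide with ${}_1 \hat G^{(m)}$, so that $K \supset {}_{m+1}\hat G$, a contradiction.

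For part (a), the assumption $\mathfrak{k} \cap \Lie G^{(m)} = 0$ is precisely what \Cref{keylemmaG} requires, so $\mathfrak{k} = 0$ or $\mathfrak{k} = \mathfrak{z}(\Lie \hat G)$, with the latter only possible for $p=2$ in types $B_n$ or $G_2$. In the first case, $K'$ is a connected infinitesimal group with trivial Lie algebra, hence $K' = 1$. In the second case, the toral central generator of $\Lie \hat G$ exponentiates to a height-one subgroup of multiplicative type isomorphic to $\boldsymbol{\mu}_p$ inside $\hat G^{(m)}$; pulling back through the quotient $K \to K'$ produces a copy of $\boldsymbol{\mu}_{p^{m+1}} \subset K$ whose Lie algebra is exactly $\mathfrak{z}(\Lie \hat G)$. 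Using the closed immersion $X \hookrightarrow G/P^\alpha \times G/(\ker \xi)P'$ and the hypothesis that $K$ acts trivially on the second factor, this $\boldsymbol{\mu}_{p^{m+1}}$-action is exactly the one considered in \Cref{no mu2 case1} (type $B_n$) or \Cref{no mu2 case2} (type $G_2$), and these propositions show it cannot preserve $X$. Thus only $\mathfrak{k}=0$ survives, giving $K'=1$.

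For part (b), the assumption $\mathfrak{k} \cap \Lie G^{(m)} = \Lie N_{G^{(m)}}$ activates \Cref{keylemmaN}, yielding $\mathfrak{k} = \Lie N_{G^{(m)}}$ or $\mathfrak{k} = \Lie N_{G^{(m)}} \oplus \mathfrak{z}(\Lie \hat G)$, with the latter occurring only in characteristic two and type $B_n$. In the first case, $K' \subset \hat G^{(m)}$ is an infinitesimal subgroup normalized by $G^{(m)}$ with Lie algebra $\Lie N_{G^{(m)}}$, and \Cref{KequalsN} applied to the exceptional pair $(G^{(m)}, \alpha)$ forces $K' = N_{G^{(m)}}$. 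In the second case, the same toral-lifting argument as in part (a) exhibits a $\boldsymbol{\mu}_{p^{m+1}} \subset K$ acting trivially on $G/(\ker \xi)P'$, and \Cref{no mu2 case3} rules this out.

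The main obstacle will be the passage from Lie-algebra to group-scheme level in the exceptional sub-cases: one has to justify that the central line $\mathfrak{z}(\Lie \hat G)$ is toral in the relevant characteristic-two situations, so that its lift in $\hat G^{(m)}$ is $\boldsymbol{\mu}_p$ rather than $\boldsymbol{\alpha}_p$, and then that the preimage in $K$ is genuinely a $\boldsymbol{\mu}_{p^{m+1}}$ so that the propositions on $\boldsymbol{\mu}$-actions apply. Once this identification is made, the remaining steps are essentially bookkeeping around the Frobenius twists.
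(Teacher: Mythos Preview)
Your overall strategy matches the paper's exactly: use \Cref{keylemmaG} and \Cref{keylemmaN} to pin down $\mathfrak{k}$, invoke \Cref{KequalsN} in case (b), and rule out the residual central line via \Cref{no mu2 case1}, \Cref{no mu2 case2}, \Cref{no mu2 case3}. The toral-versus-unipotent issue you flag at the end is real but handled as you suggest; the paper is equally brief on this point.

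There is, however, a genuine gap in your argument. \Cref{no mu2 case1}, \Cref{no mu2 case2} and \Cref{no mu2 case3} are stated and proved for the specific Picard-rank-two varieties $G/(P^\alpha \cap {}_mGP^\beta)$ or $G/(P^\alpha \cap {}^mNP^\beta)$, not for the general $X = G/(P^\alpha \cap (\ker\xi)P')$ of the lemma, whose second factor $G/(\ker\xi)P'$ may have higher Picard rank. You cannot directly assert that the $\boldsymbol{\mu}_{p^{m+1}}$-action fails to preserve $X$ by citing those propositions. The paper closes this gap by an explicit reduction step: using the minimality of $\ker\xi$, one finds a simple root $\beta \neq \alpha$ with $\langle P, P^\beta\rangle = (\ker\xi)P^\beta$, builds the contraction $\psi \colon X \to G/(P^\alpha \cap (\ker\xi)P^\beta)$ together with a complementary contraction $\psi'$, and observes that $K$ acts trivially on the target of $\psi'$ (since it already acts trivially on $G/(\ker\xi)P'$). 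Because $\psi \times \psi'$ is a closed immersion, the faithful $K$-action on $X$ descends to a faithful action on the rank-two target of $\psi$, which is precisely the variety to which the propositions apply. Without this reduction, your appeal to \Cref{no mu2 case1}--\Cref{no mu2 case3} is not justified.
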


\begin{proof}
Since $\ker \xi$ is assumed to be minimal with respect to inclusion and since the variety $X$ has Picard rank at least two, there is some simple root $\beta \neq \alpha$ such that the smallest subgroup containing both $P^\beta$ and $P^\alpha \cap (\ker \xi)P^\prime$ coincides with $(\ker \xi) P^\beta$. Geometrically, this means that
\[
\psi \colon X = G/(P^\alpha \cap (\ker \xi) P^\prime) \longrightarrow G/(P^\alpha \cap {}_m G P^\beta)
\]
is a contraction, namely the one sending all Schubert curves to a point except for those associated to $\alpha$ and to $\beta$. Consider the complementary contraction with source $X$ which contracts exactly $C_\alpha$ and $C_\beta$ and denote it as $\psi^\prime$, with target $G/Q$. The composition
\[
\psi^\prime \colon X \stackrel{h}{\longrightarrow} G/({}_mG P^\prime) \longrightarrow G/Q,
\]
since $K$ acts trivially on the target of $h$, implies that $K$ also acts trivially on $G/Q$. Since $\psi \times \psi^\prime$ is an $\underline{\Aut}_X^0$-equivariant closed immersion and since $K$ is assumed to act faithfully on $X$, then it must act faithfully on the target of $\psi$. This way we are reducing to treating the case of Picard rank two i.e. \! we can and will assume from now on that
\begin{align}
\label{target psi}
X = G/(P^\alpha \cap (\ker \xi)P^\beta),
\end{align}
where $\alpha$ is an exceptional root and $\ker \xi \in \{{}_m G, \, {}^m N\}$.

\smallskip

\textbf{(a).} Let us assume that we are in the first situation, namely that $\ker \xi = {}_mG$. Then by \Cref{keylemmaG} we have that $\Lie K^\prime \cap \Lie G^{(m)} = 0$ implies directly that $\Lie K^\prime = 0$ (in which case we are done), unless $p=2$ and $G$ is either of type $B_n$ or of type $G_2$. Assume we are in the latter cases, then $\Lie K^\prime \neq 0$ implies that $\Lie K^\prime = \mathfrak{z} (\Lie \hat{G}^{(m)})$ is the one-dimensional center. Now, under these assumptions, the target (\ref{target psi}) of the contraction $\psi$ is exactly one among the families of varieties of Picard rank two considered in \Cref{no mu2 case1} and \Cref{no mu2 case2}. By lifting $\Lie K^\prime$, we get a height one subgroup $\boldsymbol{\mu}_2 \subset K^\prime$, which is isomorphic to $H^{(m)}$ with $H \simeq \boldsymbol{\mu}_{p^{m+1}}$ having Lie algebra $\Lie H = \mathfrak{z}(\Lie \hat{G})$. So we can apply  \Cref{no mu2 case1} and \Cref{no mu2 case2} respectively, which gives a contradiction with the assumption that $K$ acts faithfully on the target of $\psi$.

\smallskip

\textbf{(b).} Let us assume that we are in the second situation, namely that $\ker \xi = {}^m N$. Then by \Cref{keylemmaN} we have that $\Lie K^\prime \cap \Lie G^{(m)} = \Lie N_{G^{(m)}}$ implies that $\Lie K^\prime = \Lie N_{G^{(m)}}$, and then by \Cref{KequalsN} we conclude that $K^\prime = N_{G^{(m)}}$ and we are done. This holds uniformly, unless $p=2$ and $G$ is of type $B_n$. In the latter case, if $\Lie K^\prime \neq \Lie N_{G^{(m)}}$, then 
\[
\Lie K^\prime = \Lie N_{G^{(m)}} \oplus \mathfrak{z}(\Lie \hat{G}^{(m)}).\]
Under these assumptions, the target (\ref{target psi}) of the contraction $\psi$ is exactly one among in the varieties considered in \Cref{no mu2 case3}. Then we proceed as in \textit{(a)} and get again a contradiction with the assumption that $K$ acts faithfully on the target of $\psi$.
\end{proof}

All necessary ingredients are now in place to compute the full automorphism group. The proof proceeds by induction on \( r \), the Picard rank of \( X \), which by \Cref{sec contractions} equals the number of simple roots of \( G \) not contained in the Levi subgroup of \( P_{\text{red}} \).

\begin{lemma}
\label{lem : 2 and Gm}
Assume that $(G,\alpha)$ is exceptional and consider 
\[
P = P^\alpha \cap {}_mG P^\beta
\]
for some $m\geq 1$. Then $\underline{\Aut}_X^0 = {}_m\hat{G}\cdot G$.
\end{lemma}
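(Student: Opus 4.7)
The plan is to realize $A := \underline{\Aut}_X^0$ as a subgroup of $\hat{G} \times G^{(m)}$ via the two contractions of Schubert curves on $X$, and then to apply \Cref{rmk no m2}(a) to identify its infinitesimal part with ${}_m\hat{G}$.

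First I would use \Cref{blanchard} applied to the two contractions $f_\alpha : X \to G/P^\alpha$ and $f_\beta : X \to G/{}_mGP^\beta$ to produce homomorphisms $\phi : A \to \hat{G}$ and $\psi : A \to G^{(m)}$: the target of $\psi$ is determined by \Cref{rem: beta not exceptional}, the restriction $\phi|_G$ is the natural inclusion $G \hookrightarrow \hat{G}$, and $\psi|_G$ is the $m$-th relative Frobenius, so that $\psi$ is surjective. Because $f_\alpha \times f_\beta$ is a closed immersion by \Cref{thm: classification higher}, the combined map $(\phi,\psi) : A \to \hat{G} \times G^{(m)}$ must be injective.

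The next step is to study $K := \ker\psi$. Since $A_{\mathrm{red}} = G$ by \Cref{lem: reduced aut}, dimensions force $K$ to be infinitesimal, and one reads off $K \cap G = {}_mG$, normalization by $G$, and the decomposition $A = G \cdot K$. The containment ${}_m\hat{G} \subset K$ follows from \Cref{gemme acts}, whose explicit construction makes ${}_m\hat{G}$ act trivially on $G/{}_mGP^\beta$. Injectivity of $(\phi,\psi)$ gives $\ker\phi \cap K = 1$, so $\phi|_K$ identifies $K$ with $\tilde K := \phi(K) \subset \hat{G}$: an infinitesimal subgroup of $\hat{G}$ normalized by $G$, containing ${}_m\hat{G}$, satisfying $\tilde K \cap G = {}_mG$, and whose induced action on $X$ is faithful and trivial on $G/{}_mGP^\beta$.

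The heart of the proof is then to apply \Cref{rmk no m2}(a) to $\tilde K$, with $\xi = F^m$. Most hypotheses are immediate from the description above; the noninclusion ${}_{m+1}\hat{G} \not\subset \tilde K$ is forced by $\tilde K \cap G = {}_mG$, since otherwise $\tilde K \cap G \supset {}_{m+1}G \supsetneq {}_mG$. The step I expect to be the main obstacle is the verification of the Lie algebra hypothesis $\Lie \tilde K' \cap \Lie G^{(m)} = 0$, where $\tilde K' := \tilde K/{}_m\hat{G}$. I would establish it at the group-scheme level by showing that $\tilde K \cap (G \cdot {}_m\hat{G}) = {}_m\hat{G}$: indeed, if $x = gh$ with $g \in G$ and $h \in {}_m\hat{G} \subset \tilde K$ lies in $\tilde K$, then $g = xh^{-1} \in \tilde K \cap G = {}_mG \subset {}_m\hat{G}$, whence $x \in {}_m\hat{G}$. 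Modding out by ${}_m\hat{G}$ yields $\tilde K' \cap G^{(m)} = 1$ inside $\hat{G}^{(m)}$, which at the level of Lie algebras gives the required vanishing. Then \Cref{rmk no m2}(a) forces $\tilde K' = 1$, so $\tilde K = {}_m\hat{G}$ and therefore $K = {}_m\hat{G}$, yielding $A = G \cdot K = {}_m\hat{G} \cdot G$, as desired.
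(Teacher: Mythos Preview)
Your proposal is correct and follows essentially the same route as the paper: both arguments use Blanchard on the contraction $f_\beta$ to obtain a surjection $\underline{\Aut}_X^0 \to G^{(m)}$, embed its kernel $K$ into $\hat{G}$ via $f_\alpha$, invoke \Cref{gemme acts} for ${}_m\hat{G}\subset K$, and then apply \Cref{rmk no m2}(a) to the quotient $K/{}_m\hat{G}$. Your treatment is in fact slightly more careful than the paper's in two places---you explicitly verify ${}_{m+1}\hat{G}\not\subset\tilde K$ and you justify $\Lie K'\cap\Lie G^{(m)}=0$ via the group-level intersection $\tilde K\cap(G\cdot{}_m\hat{G})={}_m\hat{G}$---though note that ``dimensions force $K$ infinitesimal'' only gives finiteness, and connectedness needs a short extra argument (as the paper supplies, or via $K/(K\cap G)\hookrightarrow A/A_{\mathrm{red}}$).
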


\begin{proof}
    Let $f_\beta \colon X \to G/{}_mGP^\beta$ denote the contraction associated to the simple root $\beta$, then the induced map by \Cref{blanchard} satisfies
    \begin{center}
    \begin{tikzcd}
    F^m_G \colon G \arrow[r] & \underline{\Aut}_X^0 \arrow[rr, "(f_\beta)_\ast"] && G^{(m)}
    \end{tikzcd}
    \end{center}
    so in particular $(f_\beta)_\ast$ is faithfully flat; denoting $K$ its kernel, we have a short exact sequence
    \begin{center}
        \begin{tikzcd}
            1 \arrow[r] & K \arrow[r] & \underline{\Aut}_X^0 \arrow[rr, "(f_\beta)_\ast"] && G^{(m)} \arrow[r]& 1
        \end{tikzcd}
    \end{center}
    so if we can describe $K$ then we are done. Notice that $K$ acts on the fibers of $f \colon X \to G/P^\alpha$, which means that we can see it via $(f_\alpha)_\ast$ as a subgroup of $\hat{G} = \underline{\Aut}^0_{G/P^\alpha}$. Moreover, since $\underline{\Aut}_X^0$ and $G^{(m)}$ have the same dimension, $K$ is finite. It is also connected: indeed, assume that there is some $H \subset K$ which is a constant subgroup; then in particular it is smooth so it is contained in the reduced part, which is $G$. But then $(f_\beta)_\ast$ cannot be trivial on $H$, because the iterated Frobenius is an isomorphism on constant groups. We also know by \Cref{gemme acts} that $K$ contains the whole $m$-th Frobenius kernel of $\hat{G}$ and moreover that $K \cap G = {}_mG$, because 
    \[
    {(f_\beta)_\ast}_{\vert (\underline{\Aut}_X^0)_{\text{red}}} = {(f_\beta)_\ast}_{\vert G} = F^m_G \colon G \longrightarrow G^{(m)}
    \]
    Thus we can consider the quotient 
    \[
    K^\prime := K/{}_m\hat{G} \subset \underline{\Aut}_X^0/{}_m(\underline{\Aut}_X^0) \subset (\underline{\Aut}_X^0)^{(m)} \subset \hat{G}^{(m)}
    \]
    which is infinitesimal and whose Lie algebra is strictly contained in $\Lie \hat{G}^{(m)}$. Let us denote as $\mathfrak{k}$ its Lie algebra. 
    Since $K^\prime \cap G^{(m)} = 1$, we know that $ \mathfrak{k} \cap \Lie G^{(m)} = 0$. Moreover, the fact that $K$ is a normal subgroup in $\underline{\Aut}_X^0$ implies that $\mathfrak{k}$ is a Lie subalgebra of $\Lie \hat{G}^{(m)}$ and a $G^{(m)}$-submodule. The equality $K \cap G = {}_mG$ implies moreover that $\mathfrak{k} \cap \Lie G^{(m)} = 0$. Then \Cref{rmk no m2} implies that $K^\prime = 1$, 
    which forces $K = {}_m \hat{G}$, as wanted.
\end{proof}

\begin{example}
    Let us illustrate the following proof on a family of examples with $G= \PSp_{2n}$. We denote as
\[
\alpha_1 = \varepsilon_1-\varepsilon_2, \quad \ldots \quad \alpha_{n-1} = \varepsilon_{n-1}-\varepsilon_n, \quad \alpha_n = 2\varepsilon_n
\]
the basis of the root system of $G$, with $\alpha_n$ being the only long simple root. Let us consider a variant of the variety of \cite[Proposition 4.3.4]{BSU}, namely
\[
X_m = \{ (l,W) \colon l \subset k^{2n+1} \text{ line, } W \subset k^{2n} \text{ totally isotropic, } \dim W = 2, \, \text{such that } F^m(l) \subset W\},
\]
where $F^m$ denotes the Frobenius morphism $x_i \mapsto x_i^{p^m}$ on each coordinate of $k^{2n}$. We can write it as $G/P$, where $G = \PSp_{2n}$ and 
\[
P = P^{\alpha_1} \cap {}_m G P^{\alpha_2}.
\]
Here $P^{\alpha_1}$ is the stabilizer of a line in $k^{2n}$, while $P^{\alpha_2}$ is the stabilizer of an isotropic subspace of dimension $2$ in $k^{2n}$. We have the following diagram with the two contractions of Schubert curves
\begin{center}
    \begin{tikzcd}
        & X_m \arrow[rr, hookrightarrow, "f_{\alpha_1} \times f_{\alpha_2}"] \arrow[ddl, "f_{\alpha_1}"] \arrow[ddr, "f_{\alpha_2}"] && G/P^{\alpha_1} \times G/{}_m GP^{\alpha_2} \\
        &&&\\
         G/P^{\alpha_1} = \hat{G}/P^{\alpha} = \proj^{2n-1} && G/{}_m G P^{\alpha_2} &
    \end{tikzcd}
\end{center}
where $\hat{G} = \PGL_{2n}$ and $\alpha$ is the first simple root in type $A_{2n-1}$. Here, the action of the Frobenius kernel ${}_m \hat{G}$ is trivial on $G/{}_mGP^{\alpha_2}$, which is the isotropic Grassmannian of $2$-planes in $k^{2n}$ (with $G$-action pre-composed by an $m$-th iterated Frobenius morphism). On the other hand, ${}_m \hat{G}$ acts faithfully on $\proj^{2n-1}$ and it preserves the condition $F^m(l) \subset W$, thus it also acts on $X_m$.
\end{example}

\begin{lemma}
\label{lem : 2 and Nm}
    Assume that $(G,\alpha)$ is exceptional and that the very special isogeny exists for $G$, and consider 
    \[
    P^\alpha \cap {}^m NP^\beta,
    \]
    for some $m \geq 1$. Then $\underline{\Aut}_X^0 = {}_m\hat{G} \cdot G$.
\end{lemma}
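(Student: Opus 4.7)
The plan is to mimic the proof of \Cref{lem : 2 and Gm} verbatim, replacing the isogeny $F^m_G$ with the non-central isogeny $F^m_{\overline{G}} \circ \pi_G$ and substituting part (b) of \Cref{rmk no m2} for part (a) at the final step. The overall arithmetic is the same: the composition $G \to \underline{\Aut}_X^0 \to \overline{G}^{(m)}$ has kernel ${}^mN$ rather than ${}_mG$, and the infinitesimal ``surplus'' above $G$ inside $\underline{\Aut}_X^0$ will again be exactly ${}_m\hat{G}$.

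I would first apply Blanchard's Lemma (\Cref{blanchard}) to the contraction $f_\beta \colon X \to G/{}^mN P^\beta$. By \Cref{rem: beta not exceptional}, the target satisfies $\underline{\Aut}^0_{G/{}^mN P^\beta} = \overline{G}^{(m)}$, and the composition
\[
G \longrightarrow \underline{\Aut}_X^0 \stackrel{(f_\beta)_\ast}{\longrightarrow} \overline{G}^{(m)}
\]
coincides with the non-central isogeny $F^m_{\overline{G}}\circ \pi_G$, whose kernel is precisely ${}^mN$. Faithful flatness of this composition forces $(f_\beta)_\ast$ to be faithfully flat, giving a short exact sequence
\[
1 \longrightarrow K \longrightarrow \underline{\Aut}_X^0 \longrightarrow \overline{G}^{(m)} \longrightarrow 1
\]
with $K\cap G = {}^mN$. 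The identical dimension count and the same ``no nontrivial étale subgroup can lie in $K$'' argument as in \Cref{lem : 2 and Gm} (using that $F^m_{\overline{G}}\circ\pi_G$ is injective on constant subgroups of $G$) show that $K$ is finite, connected, and therefore infinitesimal.

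Next I would invoke \Cref{gemme acts}: since ${}^mN$ contains ${}_mG$ but not ${}_{m+1}G$, the group ${}_m\hat{G}$ acts on $X$, and the construction of that action there forces it to be trivial on the second factor of $f\times h$, hence trivial on $G/{}^mN P^\beta$. Therefore ${}_m\hat{G}\subset K$. Passing to the quotient, $K^\prime \defeq K/{}_m\hat{G}$ is an infinitesimal subgroup of $\hat{G}^{(m)}$, normalized by $G^{(m)}$ (as $K\trianglelefteq \underline{\Aut}_X^0$), with
\[
K^\prime \cap G^{(m)} = {}^mN/{}_mG \simeq N_{G^{(m)}}.
\]
On Lie algebras this reads $\Lie K^\prime \cap \Lie G^{(m)} = \Lie N_{G^{(m)}}$, which is exactly the hypothesis of \Cref{rmk no m2}\,(b) with $\xi = F^m_{\overline{G}}\circ\pi_G$. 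Applying that lemma yields $K^\prime = N_{G^{(m)}}$, so that $K = {}_m\hat{G}\cdot {}^mN$, and hence
\[
\underline{\Aut}_X^0 = K\cdot G = {}_m\hat{G}\cdot {}^mN \cdot G = {}_m\hat{G}\cdot G,
\]
which is what we want.

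The only genuinely delicate point is the application of \Cref{rmk no m2}\,(b) in the type $B_n$, characteristic two situation, where a priori one could have $\Lie K^\prime = \Lie N_{G^{(m)}} \oplus \mathfrak{z}(\Lie \hat{G}^{(m)})$ rather than just $\Lie N_{G^{(m)}}$. That potential extra central line is ruled out inside the proof of \Cref{rmk no m2}\,(b) via the concrete non-invariance computation of \Cref{no mu2 case3}, so the obstacle is absorbed by a result already available; the rest of the argument then proceeds uniformly across the exceptional pairs for which the very special isogeny is defined.
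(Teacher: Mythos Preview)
Your proposal is correct and follows essentially the same approach as the paper: apply Blanchard's Lemma to $f_\beta$, identify the kernel $K$ of $(f_\beta)_\ast$ as an infinitesimal subgroup of $\hat{G}$ containing ${}_m\hat{G}$ and ${}^mN$ with $K\cap G = {}^mN$, then pass to $K' = K/{}_m\hat{G}$ and invoke \Cref{rmk no m2}(b) to force $K' = N_{G^{(m)}}$. Your explicit remark on the $B_n$, $p=2$ subtlety handled by \Cref{no mu2 case3} is a useful elaboration, but the argument is otherwise identical to the paper's.
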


\begin{proof}
    Reasoning in the same way as in the above proof, we get that the map induced by the contraction $f_\beta$ satisfies
    \begin{center}
    \begin{tikzcd}
    F^m_{\overline{G}} \circ \pi_G \colon G \arrow[r] & \underline{\Aut}_X^0 \arrow[rr, "(f_\beta)_\ast"] &&  \overline{G}^{(m)}.
    \end{tikzcd}
    \end{center}
    Denoting $K = \ker (f_\beta)_\ast$, \Cref{gemme acts} implies that both ${}_m\hat{G}$ and ${}^mN$ are contained in $K$. Moreover, by restricting $(f_\beta)_\ast$ to the reduced part of the automorphism group we deduce that $K\cap G = {}^m N$. In order to conclude, it is enough to show that $K$ is the subgroup generated by them. We proceed to consider the quotient
    \[
    K^\prime := K/{}_m\hat{G} \subset \underline{\Aut}_X^0 /{}_m(\underline{\Aut}_X^0) \subset (\underline{\Aut}_X^0)^{(m)} \subset \hat{G}^{(m)},
    \]
    which is again an infinitesimal subgroup by the same reason as in the above lemma. Let us denote as $\mathfrak{k}$ its Lie algebra, which is a $G^{(m)}$-submodule and a Lie subalgebra of $\Lie \hat{G}^{(m)}$, and which satisfies $\mathfrak{k}\cap \Lie G^{(m)} = \Lie N_{G^{(m)}}$ because $K \cap G = {}^m N$. By \Cref{rmk no m2}, we conclude that  
     $K^\prime = N_{G^{(m)}}$, as wanted. 
\end{proof}

The only case left in Picard rank two which is not covered by the two above lemmas (and which is not an exotic $G_2$-case) is the following.

\begin{lemma}
\label{lem : notexceptional}
    Consider a parabolic subgroup of the form
    \[
    P = P_J \cap {}^m N P^\beta \quad\text{or} \quad P = P_J \cap {}_m G P^\beta,
    \]
    where $m\geq 0$ is an integer and where $\beta \in J$. Moreover, assume that $P_J$ is not of the form $P^\alpha$ for any $(G,\alpha)$ exceptional in the sense of Demazure. Then $\underline{\Aut}_X^0 = G$.
\end{lemma}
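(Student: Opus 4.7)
By Lemma~\ref{lem: reduced aut}, $(\underline{\Aut}_X^0)_{\text{red}} = G$, so it suffices to show the natural inclusion $G \hookrightarrow \underline{\Aut}_X^0$ is an isomorphism. The plan is to exploit the two Schubert contractions $f \colon X \to G/P_J$ and $f_\beta \colon X \to G/(\ker\xi)P^\beta$, which by Theorem~\ref{thm: classification higher} combine into a closed immersion
\[
f \times f_\beta \colon X \hookrightarrow G/P_J \times G/(\ker\xi)P^\beta.
\]
By Blanchard's Lemma~\ref{blanchard}, each contraction induces a group scheme homomorphism on $\underline{\Aut}_X^0$. The non-exceptionality hypothesis on $P_J$ together with Demazure's Theorem~\ref{demazure77} yields $\underline{\Aut}_{G/P_J}^0 = G$, so $f_*$ is a retraction onto $G$. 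Writing $H := \underline{\Aut}_{G/(\ker\xi)P^\beta}^0$, which by Remark~\ref{rem: beta not exceptional} equals $G^{(m)}$ or $\overline G^{(m)}$, the composition $G \hookrightarrow \underline{\Aut}_X^0 \xrightarrow{(f_\beta)_*} H$ is the natural isogeny $F^m$ or $F^m \circ \pi$, and the closed immersion makes $(f_*, (f_\beta)_*) \colon \underline{\Aut}_X^0 \hookrightarrow G \times H$ injective.

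Following the pattern of Lemma~\ref{lem : 2 and Gm}, let $K := \ker (f_\beta)_*$. Since $(f_\beta)_*$ restricted to $G$ is surjective, $(f_\beta)_*$ itself is faithfully flat; a dimension count forces $\dim K = 0$, and $K \cap G = \ker\xi$ is infinitesimal, so $K$ itself is infinitesimal. The injectivity of the joint map restricts to an injection $f_*|_K \colon K \hookrightarrow G$, and the image $f_*(K)$ is a normal infinitesimal subgroup of $G$ containing $\ker\xi$.

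The crux is to show $f_*(K) = \ker\xi$; this forces $K = \ker\xi \subset G$ and hence $\underline{\Aut}_X^0 = G$. For $k \in K$, both $k$ and $f_*(k) \in G \subset \underline{\Aut}_X^0$ lie in $\underline{\Aut}_X^0$, so their joint embeddings $(f_*(k), 1)$ and $(f_*(k), F^m(f_*(k)))$ both preserve $X$; therefore so does their ratio $(1, F^m(f_*(k))) \in \{1\} \times H$. Unwinding this condition at the level of the parametrisation $X = \{(gP_J, F^m(g)P^\beta)\}_{g \in G}$ forces $F^m(f_*(k))$ to lie in the $G^{(m)}$-conjugation core of the subset $P_J^{(m)} \cdot P^\beta \subset G^{(m)}$. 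The Lie algebra of this core is the largest $p$-Lie ideal of $\Lie G$ contained in $\Lie P_J + \Lie P^\beta$; since this sum misses precisely the root subspaces $\Lie U_{-\gamma}$ for positive $\gamma$ with $\beta \in \Supp(\gamma)$ and $\Supp(\gamma) \not\subset J$, in generic characteristics (where $\Lie G$ is $G$-simple) this ideal is zero.

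The main obstacle is the small-characteristic cases $B_n, C_n, F_4$ at $p=2$ and $G_2$ at $p=3$, where $\Lie G$ admits the extra ideal $\Lie N$ spanned by short root subspaces; one must verify that $\Lie N \not\subset \Lie P_J + \Lie P^\beta$. A short root-system check, using the hypotheses $\beta \in J$, $P_J \neq G$, and the non-exceptionality of $P_J$, exhibits a short positive root $\gamma$ with $\beta \in \Supp(\gamma)$ and $\Supp(\gamma) \not\subset J$, ensuring $\Lie N \not\subset \Lie P_J + \Lie P^\beta$. Consequently the core is trivial, $F^m(f_*(k)) = 1$, $f_*(k) \in \ker\xi$, giving $f_*(K) = \ker\xi$ and $\underline{\Aut}_X^0 = G$.
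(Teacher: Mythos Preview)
Your strategy coincides with the paper's at the coarse level: both set up the two contractions $f$ and $f_\beta$, use the non-exceptionality of $P_J$ to identify $\underline{\Aut}^0_{G/P_J}=G$, and exploit the closed immersion $f\times f_\beta$ to make the joint map $(f_*,(f_\beta)_*)$ injective. The paper then simply asserts that $K=\ker(f_\beta)_*$ equals $\ker\xi$ inside $G$; you correctly perceive that this identification of $K$ with $f_*(K)$ needs an argument, and supply one via the ratio $(1,\xi(f_*(k)))$ and the resulting conjugation-core computation. In that sense your proof is a genuine elaboration of the paper's, not a different route.

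Two points need tightening. First, your appeal to Remark~\ref{rem: beta not exceptional} is misplaced: that remark is conditional on some $(G,\alpha)$ being exceptional, which is exactly what you are excluding, so $(G,\beta)$ may itself be exceptional and $H=\underline{\Aut}^0_{G/(\ker\xi)P^\beta}$ may be the larger group $\hat G^{(m)}$ rather than $G^{(m)}$ (the paper's proof accommodates this via the inclusion $\iota$). Fortunately your core argument is unaffected, since $\xi(f_*(k))$ lies in $G^{(m)}$ regardless and $\hat P\cdot\xi(P_J)\cap G^{(m)}=P^{\beta'}\cdot\xi(P_J)$. Second, your root-system check must be done separately for the two shapes of $\xi$: for $\xi=F^m$ one needs a \emph{short} positive root with $\beta$ in its support and support not contained in $J$, whereas for $\xi=F^m_{\overline G}\circ\pi_G$ the check lives in $\overline G$, where short and long swap. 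In both cases the highest short root (of $G$, resp.\ of $\overline G$) has full support, so the check goes through using only $J\neq\Delta$; the non-exceptionality of $P_J$ is not needed at this particular step.
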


\begin{proof}
Let us denote as $f_\beta$ the contraction whose target is respectively $G/{}^mN P^\beta$ or $G/{}_mGP^\beta$. Let us start by the first case, namely assuming that $P = P_J \cap {}^mN P^\beta$. By reasoning as in the previous proofs, the map $f_\beta$ induces an homomorphism of algebraic groups $(f_\beta)_\ast$ between the automorphism groups of $X$ and of $G/{}^mNP^\beta$. In particular, we have
\begin{center}
\begin{tikzcd}
    G \arrow[r, hookrightarrow] \arrow[rrr, bend left, "F^m_{\overline{G} \circ \pi_G}"]& \underline{\Aut}_X^0 \arrow[rr, "(f_\beta)_\ast"] && \overline{G}^{(m)} &
    \text{or} & G \arrow[r, hookrightarrow] \arrow[rr, bend left, "F^m_{\overline{G}} \circ \pi_G"] & \underline{\Aut}_X^0 \arrow[r] \arrow[rr, bend right, "(f_\beta)_\ast"] & \overline{G}^{(m)} \arrow[r, "\iota"] & \hat{\overline{G}}^{(m)},
\end{tikzcd}
\end{center}
where $\overline{\iota}$ denotes the inclusion of $\overline{G}$ into $\hat{\overline{G}}$ and this case only happens when $(\overline{G},\overline{\beta}$) is exceptional. Now since $P_J$ is by assumption not exceptional we have that the automorphism group of $G/P_J$ is just $G$; hence 
\[
K = \ker (f_\beta)_\ast = {}^m N \subset G = \underline{\Aut}^0_{G/P_J},\]
so that $(f_\beta)_\ast$ is just the map $F^m_{\overline{G}} \circ \pi_G$ with source $G$, and we are done.

\medskip

Next, let us assume that $P= P_J \cap {}_mGP^\beta$. Then we follow the same reasoning as above: the map $(f_\beta)_\ast$ satisfies 
\begin{center}
\begin{tikzcd}
    G \arrow[r, hookrightarrow] \arrow[rrr, bend left, "F^m_{G}"]& \underline{\Aut}_X^0 \arrow[rr, "(f_\beta)_\ast"] && G^{(m)} &
    \text{or} & G \arrow[r, hookrightarrow] \arrow[rr, bend left, "F^m_{G}"] & \underline{\Aut}_X^0 \arrow[r] \arrow[rr, bend right, "(f_\beta)_\ast"] & G^{(m)} \arrow[r, "\iota"] & \hat{G}^{(m)},
\end{tikzcd}
\end{center}
where $\iota$ denotes the inclusion of $G$ into $\hat{G}$ and this case only happens when $(G,\beta)$ is exceptional. Then we use again the hypothesis that $P_J$ is not exceptional to conclude that
\[
K = \ker(f_\beta)_\ast = {}_m G \subset G = \underline{\Aut}^0_{G/P_J},
\]
so that $(f_\beta)_\ast$ coincides with the map $F^m_G$ with source $G$.
\end{proof}


All the necessary tools to prove \Cref{aut main intro}, whose statement is recalled below, are now in place. 

\begin{theorem}
\label{aut main}
    Consider a parabolic subgroup of a simple adjoint group $G$, of the form
    \[
    P = P_J \cap (\ker \xi)P^\prime,
    \]
     where the roots not in the Levi of $P_J$ and those not in the Levi of $P^\prime_{\text{red}}$ form distinct subsets of the basis. 
    Moreover, assume that $\xi$ is minimal with respect to inclusion. Then the following hold. 
    \begin{enumerate}
        \item If $P_J$ is not of the form $P^\alpha$ for an exceptional root $\alpha$, then $\underline{\Aut}_X^0 = G$.
        \item If $P_J = P^\alpha$ for some exceptional root $\alpha$, let $m \geq 0$ such that $ker \xi$ contains the Frobenius kernel ${}_mG$ but not ${}_{m+1}G$. Then $\underline{\Aut}_X^0 = {}_m\hat{G} \cdot G$, where $\hat{G}$ is the automorphism group of $G/P^\alpha$.
    \end{enumerate}
\end{theorem}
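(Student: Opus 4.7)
The plan is to induct on the Picard rank $r$ of $X$, using the preceding Picard-rank-$2$ lemmas as the base case and Blanchard's lemma to reduce the higher-rank cases. By \Cref{lem: reduced aut} the reduced part of $\underline{\Aut}^0_X$ equals $G$ in both cases; in case~(2), \Cref{gemme acts} already furnishes the inclusion ${}_m\hat{G}\cdot G \subset \underline{\Aut}^0_X$. What remains is to establish the reverse inclusions.

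The base case $r=2$ is covered exhaustively by the three preceding lemmas: \Cref{lem : 2 and Gm} and \Cref{lem : 2 and Nm} treat case~(2) with $\ker\xi = {}_mG$ and $\ker\xi = {}^mN$ respectively, while \Cref{lem : notexceptional} handles case~(1).

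For the inductive step $r \geq 3$, I would consider the two contractions $f \colon X \to G/P_J$ and $h\colon X \to G/(\ker\xi) P'$. By \Cref{thm: classification higher} their product $f\times h$ is a closed immersion, so applying \Cref{blanchard} to each and combining yields an injection
\[
\underline{\Aut}^0_X \hookrightarrow \underline{\Aut}^0_{G/P_J} \times \underline{\Aut}^0_{G/(\ker\xi)P'}.
\]
The first factor is described by \Cref{demazure77}: it equals $\hat{G}$ in case~(2), and $G$ in case~(1) (either because $|A|\geq 2$ forces Picard rank at least two, or because $|A|=1$ with $(G,\alpha)$ non-exceptional; the corner case $|A|=0$ is handled by replacing $f$ with a suitable $f_\beta$ to another target of strictly smaller Picard rank). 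The second factor has Picard rank $|B| = r - |A| < r$, so the induction hypothesis applies and yields an explicit description of the form $G^{(\xi)}$ or ${}_{m'}\hat{G}' \cdot G^{(\xi)}$.

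The kernel $K$ of $(f)_*$ is then an infinitesimal subgroup of the second factor, normalized by $G$, with Lie algebra a $G$-submodule of $\Lie \hat{G}'$ meeting $\Lie G$ trivially in case~(1), or in $\Lie N$ in case~(2). The Lie-algebra rigidity provided by \Cref{keylemmaG}, \Cref{keylemmaN}, \Cref{KequalsN}, combined with \Cref{rmk no m2}, then forces $K=1$ in case~(1) and $K={}_m\hat{G}$ in case~(2), closing the induction. The principal obstacle is ruling out ``mixed'' infinitesimal automorphisms that would combine the exceptional direction with a twisted one; the faithful-action obstructions of \Cref{no mu2 case1}--\Cref{no mu2 case3} are what exclude such pathological subgroups in the delicate small-characteristic situations, particularly the $\boldsymbol{\mu}_{p^{m+1}}$ with Lie algebra equal to the center of $\Lie \hat G$.
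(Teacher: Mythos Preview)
Your overall architecture — induction on the Picard rank, the three rank-$2$ lemmas as base case, and Blanchard's lemma together with the closed immersion $f\times h$ for the inductive step — is exactly the paper's. The problem is in which kernel you analyze.

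You set $K=\ker f_*$, which embeds via $h_*$ into the second factor $\underline{\Aut}^0_{G/(\ker\xi)P'}$, but you then ascribe to $K$ properties that actually belong to $\ker h_*$. In case~(2): since $f_*|_G$ is the inclusion $G\hookrightarrow\hat G$, one has $K\cap G=1$, so $\Lie K$ cannot meet $\Lie G$ in $\Lie N$; and your conclusion $K={}_m\hat G$ is impossible because ${}_m\hat G$ acts \emph{faithfully} on $G/P^\alpha$ and therefore never lies in $\ker f_*$. Moreover, by \Cref{rem: beta not exceptional} the second factor in case~(2) is simply $G^{(m)}$ or $\overline G^{(m)}$, so there is no $\hat G'$ for the rigidity lemmas to bite on. In case~(1): to invoke \Cref{keylemmaG} or \Cref{keylemmaN} for $h_*(K)$ inside a potential $\hat G'^{(m)}$ you would need $\Lie h_*(K)\cap\Lie G^{(m)}=0$, but this does not follow from $K\cap G=1$ in $\underline{\Aut}^0_X$, since $h_*|_G$ has kernel $\ker\xi$.

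The paper takes instead $K=\ker h_*$ and embeds it via $f_*$ into the \emph{first} factor. In case~(1) that factor is $G$ itself (as $P_J$ is not exceptional), so $K$ is identified with $\ker\xi$ directly, with no appeal to the rigidity lemmas. In case~(2) that factor is $\hat G$, while the second factor is $G^{(m)}$ or $\overline G^{(m)}$; one has $K\cap G=\ker\xi$ from $h_*|_G=\xi$, which places $K$ precisely in the hypotheses of \Cref{rmk no m2}, yielding $K={}_m\hat G$ (respectively ${}_m\hat G\cdot{}^mN$). Swapping your kernel to $\ker h_*$ repairs the argument and makes it coincide with the paper's.
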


\begin{proof}

Let us proceed by induction on $r \geq 2$, the number of simple roots not in a Levi subgroup of $P_{\text{red}}$, i.e.\! the Picard rank of $X = G/P$. For $r = 2$ the result follows from \Cref{lem : notexceptional}, \Cref{lem : 2 and Nm}, \Cref{lem : 2 and Gm}. Assume that the statement is true for any $r^\prime \geq r$.

\smallskip

\textbf{(1).} Let us place ourselves in the first situation. Namely, the parabolic is either $P = P_J \cap {}_mGP^\prime$ or $P = P_J \cap {}^mNP^\prime$ for some $m \geq 0$, 
and such that $P^\prime_{\text{red}} = P_{J^\prime}$ with $J \cap J^\prime  = I$. Moreover, $P_J$ is not of the form $P^\alpha$ for any exceptional root $\alpha$ and $\Delta \backslash (J \cup J^\prime)$ has cardinality $r+1$. Since $\Delta \backslash J^\prime$ has cardinality $r^\prime \geq r$, we can apply the induction hypothesis to the variety $G/(\ker \xi) P^\prime$, which is either isomorphic to $G/P^\prime$ or isomorphic to  $\overline{G}/\overline{P}^\prime$, where we denote as 
\[
\overline{P}^\prime := (F^m_{\overline{G}}\circ \pi_G)(P^\prime).
\]
Notice that in both cases the automorphism group of the target variety is twisted by $(-)^{(m)}$, because the stabilizer $(\ker \xi)P^\prime$ by assumption contains the Frobenius kernel ${}_m G$. Let us treat the two cases separately.

\smallskip

 Assume that $\ker \xi = {}_mG$. By construction, the parabolic $P^\prime$ is either of the form \textbf{(1)}, or it is of the form 
\[
P^\prime = P^\alpha \cap (\ker \zeta) P^{\prime\prime}.
\]
for some isogeny $\zeta$ and some simple root $\alpha$ such that $(G,\alpha)$ is exceptional. By applying the induction hypothesis to $P^\prime$ we get that
\[
\underline{\Aut}^0_{G/{}_mGP^\prime} = G \quad \text{or} \quad \underline{\Aut}^0_{G/{}_mG P^\prime} = {}_r\hat{G} \cdot G,
\]
where $s$ is such that $\ker \zeta$ contains the $s$-th Frobenius kernel of $G$ but not the $(s+1)$-th. Next, denoting as $f$ and $h$ the two contractions with source $X$ and respective targets $G/P_J$ and $G/{}_mGP^\prime$, we have the following diagram. In the first case, \begin{center}
    \begin{tikzcd}
        1 \arrow[r] & K \arrow[d, "f_\ast", hookrightarrow] \arrow[r, hookrightarrow] & \underline{\Aut}_X^0 \arrow[r, "h_\ast"] & G \arrow[r] & 1\\
        & G = \underline{\Aut}^0_{G/P_J}  \arrow[rru, "F^m_G", bend right] &&&
    \end{tikzcd}
\end{center}
from which we conclude that $K={}_m G$ hence $h_\ast$ is just the $m$-th iterated Frobenius and $\underline{\Aut}_X^0 = G$. In the second case,
\begin{center}
    \begin{tikzcd}
        1 \arrow[r] & K\arrow[r, hookrightarrow] \arrow[d, hookrightarrow, "f_\ast"] & \underline{\Aut}_X^0 \arrow[r, "h_\ast"] & {}_s \hat{G}^{(m)} \cdot G^{(m)} \arrow[r, hookrightarrow] & \hat{G}^{(m)}\\
        & G = \underline{\Aut}^0_{G/P_J} \arrow[rrru, bend right, "\iota"] &&&
    \end{tikzcd}
\end{center}
where $\iota$ is the inclusion of $G^{(m)}$ into $\hat{G}^{(m)}$. In this case, we get again that
\[
K = \ker h_\ast = \ker (\iota \circ F^m_G) ={}_mG.
\]

\smallskip

 Assume that $\ker \xi = {}^mN$ and let us proceed in the analogous way as in the previous case. The parabolic $\overline{P}^\prime$ is either of the form (1) or it is of the form 
\[
\overline{P}^\prime = P^{\overline{\alpha}} \cap (\ker \overline{\zeta}) \overline{P}^{\prime\prime}
\]
for some isogeny $\zeta$ and some simple root $\alpha$ such that $(G,\alpha)$ is exceptional. So by the induction hypothesis 
for some isogeny $\zeta$ and some simple root $\overline{\alpha}$ such that $(\overline{G}, \overline{\alpha})$ is exceptional. Thus, by the induction hypothesis we know that In the first case, by the induction hypothesis 
\[
\underline{\Aut}^0_{G/{}^mNP^\prime} = \overline{G}^{(m)} \quad \text{or} \quad \underline{\Aut}^0_{G/{}^mNP^\prime} = {}_s \hat{\overline{G}}^{(m)}\cdot \overline{G}^{(m)}, \]
where $s$ is such that $\ker \overline{\zeta}$ contains the $s$-th Frobenius kernel of $\overline{G}$ but not the $(s+1)$-th. Next, still denoting by $f$ and $h$ the two contractions with source $X$, we have the following. In the first case,
\begin{center}
    \begin{tikzcd}
        1 \arrow[r] & K \arrow[d, "f_\ast", hookrightarrow] \arrow[r, hookrightarrow] & \underline{\Aut}_X^0 \arrow[r, "h_\ast"] & \overline{G}^{(m)} \arrow[r] & 1\\
        & G = \underline{\Aut}^0_{G/P_J}  \arrow[rru, "F^m_{\overline{G}} \circ \pi_G", bend right] &&&
    \end{tikzcd}
\end{center}
from which we conclude that $K={}^mN$ hence $h_\ast$ is the very special isogeny and $\underline{\Aut}_X^0 = G$. In the second case,
\begin{center}
    \begin{tikzcd}
        1 \arrow[r] & K\arrow[r, hookrightarrow] \arrow[d, hookrightarrow, "f_\ast"] & \underline{\Aut}_X^0 \arrow[r, "h_\ast"] & {}_s \hat{\overline{G}}^{(m)} \cdot \overline{G}^{(m)}  \arrow[r, hookrightarrow] & \hat{\overline{G}}^{(m)}\\
        & G = \underline{\Aut}^0_{G/P_J} \arrow[rrru, bend right, "\overline{\iota} \circ F^m_{\overline{G}} \circ \pi_G"] &&&
    \end{tikzcd}
\end{center}
where $\overline{\iota}$ is the inclusion of $\overline{G}^{(m)}$ into $\hat{\overline{G}}^{(m)}$. In this case, we get that
\[
K = \ker h_\ast = \ker (\iota \circ F^m_{\overline{G}} \circ \pi_G) = {}^mN,
\]
thus once again $h_\ast = F^m_{\overline{G}} \circ \pi_G$, and we are done.

\medskip

\textbf{(2).} Let us place ourselves in the second situation, namely let $P=P^\alpha \cap (\ker \xi) P^\prime$ and assume that $(G,\alpha)$ is exceptional and that $\Delta \backslash J^\prime$ has cardinality $r$, where $P^\prime_{\text{red}} = P_{J^\prime}$. By \Cref{rem: beta not exceptional}, none of the simple roots $\beta \in \Delta \backslash J^\prime$ satisfy that $(G,\beta)$ nor $(\overline{G},\overline{\beta})$ are exceptional in the sense of Demazure. In particular, we can apply the induction hypothesis to $G/(\ker \xi)P^\prime$, which fits into the situation (1). In particular, its automorphism group is either $G$ or $\overline{G}$, acting by a twist of some iterated Frobenius morphism. More precisely, let us denote as $f_\alpha$ and $h$ the contractions with source $X$ and target respectively $G/P^\alpha$ and $G/(\ker \xi)P^\prime$. Then we have the following diagram where the top row is an exact sequence.
\begin{center}
    \begin{tikzcd}
        1 \arrow[r] & K\arrow[r, hookrightarrow] \arrow[d, hookrightarrow, "(f_\alpha)_\ast"] & \underline{\Aut}_X^0 \arrow[r, "h_\ast"] & \underline{\Aut}^0_{G/(\ker \xi)P^\prime} \arrow[r] & 1\\
        & \hat{G} & G = (\underline{\Aut}_X^0)_{\text{red}} \arrow[u, hookrightarrow] \arrow[ur, "\xi", bend right] &&
    \end{tikzcd}
\end{center}
Here either $\xi = F^m_G$ and $\underline{\Aut}^0_{G/(\ker \xi)P^\prime} = G^{(m)}$, or $\xi = F^m_{\overline{G}} \circ \pi_G$ and $\underline{\Aut}^0_{G/(\ker \xi)P^\prime} = \overline{G}^{(m)}$. At this point, we are exactly in the same setting as in the proof of \Cref{lem : 2 and Gm} or of \Cref{lem : 2 and Nm}. By repeating the same reasoning i.e.\! by applying again \Cref{rmk no m2}, we find that $K = {}_m\overline{G}$ in the first case and that $K = {}_m\overline{G}\cdot {}^mN$ in the second case. Thus the automorphism group is indeed isomorphic to ${}_m\overline{G}\cdot G$, which concludes the proof.
\end{proof}

\begin{corollary}
\label{lem : tf nosections}
    Let $G$ be a simple adjoint group and consider a parabolic $P = P_J \cap Q$, with 
    \[
    {}_1G \subset Q
    \] and $Q_{\text{red}}=P_{J^\prime}$, where $J$ and $J^\prime$ are subsets of simple roots such that $J \cap J^\prime = I$. 
    Let $f \colon X=G/P \to G/P_J$. Then the relative tangent bundle $T_f$ satisfies
    \[
    H^0(X,T_f) = 0.
    \]
\end{corollary}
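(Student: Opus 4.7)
The plan is to analyze the short exact sequence of tangent sheaves associated to the contraction $f$,
\[
0 \longrightarrow T_f \longrightarrow T_X \longrightarrow f^\ast T_{G/P_J} \longrightarrow 0,
\]
and extract $H^0(X, T_f)$ by taking global sections. Since $f$ is a contraction, $f_\ast \mathcal{O}_X = \mathcal{O}_{G/P_J}$, and the projection formula yields $H^0(X, f^\ast T_{G/P_J}) = H^0(G/P_J, T_{G/P_J}) = \Lie \underline{\Aut}^0_{G/P_J}$. Combined with the standard identification $H^0(X, T_X) = \Lie \underline{\Aut}_X^0$, the long exact sequence reduces the claim to injectivity of the natural map $\Lie \underline{\Aut}_X^0 \to \Lie \underline{\Aut}^0_{G/P_J}$.

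The next step is to identify this map as the Lie algebra differential $\Lie f_\ast$ of the group scheme homomorphism $f_\ast \colon \underline{\Aut}_X^0 \to \underline{\Aut}^0_{G/P_J}$ furnished by Blanchard's Lemma (\Cref{blanchard}). Granted this identification, it then suffices to prove that $f_\ast$ is a closed immersion of group schemes, since this forces $\Lie f_\ast$ to be injective.

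To establish that $f_\ast$ is a closed immersion, I invoke \Cref{aut main}. If $P_J$ is not of the form $P^\alpha$ with $(G,\alpha)$ exceptional in the sense of Demazure, then $\underline{\Aut}_X^0 = G = \underline{\Aut}^0_{G/P_J}$; the $G$-equivariance of $f$ then forces $f_\ast$ to restrict to the identity on $G$, and being a homomorphism of the simple group $G$ to itself this forces $f_\ast$ to be the identity. If instead $P_J = P^\alpha$ is exceptional, then $\underline{\Aut}_X^0 = {}_m \hat{G} \cdot G$ and $\underline{\Aut}^0_{G/P_J} = \hat{G}$; here the construction of the ${}_m\hat{G}$-action on $X$ in \Cref{gemme acts} was set up precisely so that $f$ is $\hat{G}$-equivariant on the target, and this identifies $f_\ast$ with the inclusion of subgroup schemes ${}_m\hat{G} \cdot G \hookrightarrow \hat{G}$.

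The main obstacle is the second case: one must retrace the definition of the ${}_m\hat{G}$-action in the proof of \Cref{gemme acts} to verify that $f_\ast$ really is this natural inclusion, rather than some twist of it. Once this is verified, $f_\ast$ is a closed immersion in both cases, $\Lie f_\ast$ is injective, and the exact sequence obtained at the start yields $H^0(X, T_f) = 0$. The hypothesis ${}_1G \subset Q$ guarantees that $P$ is of the form treated by \Cref{aut main}, avoiding the exotic parabolic subgroups in type $G_2$ with $p=2$.
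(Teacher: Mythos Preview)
Your approach is correct but differs from the paper's. The paper first observes, repeating the argument of \Cref{lem nonred exceptional}, that the hypothesis ${}_1G \subset Q$ forces the short exact sequence $0 \to T_f \to T_X \to f^\ast T_{G/P_J} \to 0$ to \emph{split}, so that $T_X = T_f \oplus f^\ast T_{G/P_J}$; it then computes $\dim H^0(X,T_f)$ as the difference $\dim \Lie \underline{\Aut}_X^0 - \dim \Lie \underline{\Aut}^0_{G/P_J}$, which vanishes by \Cref{aut main} and \Cref{demazure77} (both sides equal $\dim \Lie G$, resp.\ $\dim \Lie \hat G$, according to whether $P_J$ is exceptional).

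Your route bypasses the splitting entirely: you only need injectivity of the map on global sections, which you obtain by identifying it with $\Lie f_\ast$ and showing $f_\ast$ is a closed immersion. This is slightly more economical, since the splitting is a stronger statement than is needed here. Two remarks: in the exceptional case the triviality of $\ker f_\ast$ is exactly the content of \Cref{cor : ghat 1}, so you can cite that directly rather than retracing \Cref{gemme acts}; and the paper's route makes the role of ${}_1G \subset Q$ more transparent (it is the splitting hypothesis), whereas in your argument it enters only to place $P$ within the scope of \Cref{aut main}.
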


\begin{proof}
    By applying the same reasoning as in the proof of \Cref{lem nonred exceptional}, the condition ${}_1G \subset Q$ ensures that the short exact sequence  
\[
0 \rightarrow T_f \rightarrow T_X \rightarrow f^\ast T_{G/P_J} \rightarrow 0
\]  
is split. Consequently, the tangent bundle of \(X\) satisfies 
\[
T_X = T_f \oplus f^\ast T_{G/P_J}.
\]
Moreover, by \Cref{aut main}, we know that the full automorphism group of \(X\) in this case is either equal to \(G\), or in the case where $P_J = P^\alpha$ is exceptional, then it contains the full Frobenius kernel of $\hat{G}$. Therefore, we obtain  
\[
\dim H^0(X,T_f) = \dim H^0(X,T_X) - \dim H^0(f^\ast T_{G/P_J}) = \dim \Lie \underline{\Aut}^0_X - \dim \Lie G = 0
\] 
in the first case, and
\[
\dim H^0(X,T_f) = \dim H^0(X,T_X) - \dim H^0(f^\ast T_{G/P^\alpha}) = \dim \Lie \underline{\Aut}^0_X - \dim \Lie \hat{G}= 0
\] 
in the second, which completes the proof.
\end{proof}


\section{Type $G_2$ in characteristic two}
\label{sec exotic}
The aim of this section is to prove \Cref{G2 intro}; more precisely, to compute the automorphism group of \( G/P \), where \( P \) is an exotic parabolic subgroup. Throughout, let us assume \( p = 2 \) and let \( G \subset \Sp_6 \) be a simple group of type \( G_2 \).

\subsection{Picard rank one}
By \Cref{th: classification parabolic subgroups}, there exists exactly one homogeneous variety of Picard rank one that is not a standard flag variety. It can be realized as
\[
Y = G/Q \subset X = \Sp_6/P^\beta,
\]
where \( \beta \) denotes the long simple root in type \( C_3 \), and
\[
Q = \mathcal{Q}_2
\]
is the second exotic parabolic subgroup described in \Cref{exotic}. Denote by \( L \) the class of an ample generator of \( \Pic X \), of weight \( \beta \), and by \( M \) the ample generator of \( \Pic Y \), of weight \( 2\alpha_1 + \alpha_2 = \varpi_1 \), where \( \varpi_1 \) is the first fundamental weight of \( G \), and \( \alpha_1, \alpha_2 \) are respectively the short and long simple roots of \( G \). The variety \( Y \) is a general hyperplane section of \( X \) with respect to the line bundle \( L \).

 \begin{proposition}
 \label{rank one exotic}
     The tangent sheaf of the variety $Y$ satisfies $\dim H^0(Y,T_Y) = 14$. In particular, $\underline{\Aut}_Y^0 = G$.
 \end{proposition}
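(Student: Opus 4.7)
The overall strategy is to show $\dim H^0(Y, T_Y) = 14$. Combined with \Cref{lem: reduced aut}, which gives $(\underline{\Aut}_Y^0)_{\mathrm{red}} = G$ of dimension $14$, this will force $\underline{\Aut}_Y^0$ to be smooth and hence equal to its reduced part $G$, since a group scheme of finite type whose Lie algebra has the same dimension as its reduced part is automatically smooth.

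To compute the tangent space, I would exploit the hyperplane embedding $Y \hookrightarrow X = \Sp_6/P^\beta$, which gives $N_{Y/X} \simeq L|_Y$, together with the two short exact sequences
\[
0 \to T_X \otimes L^{-1} \to T_X \to T_X|_Y \to 0 \quad \text{and} \quad 0 \to T_Y \to T_X|_Y \to L|_Y \to 0.
\]
By \Cref{demazure77}, Demazure's theorem applies to $X$ (the pair $(\Sp_6, \beta)$ is not exceptional in type $C_n$, since there the exceptional root is the short $\alpha_1$), giving $\underline{\Aut}_X^0 = \PSp_6$ and so $\dim H^0(X, T_X) = 21$. Granting the vanishings $H^i(X, T_X \otimes L^{-1}) = 0$ for $i = 0, 1$, the first sequence identifies $H^0(Y, T_X|_Y) = \Lie \PSp_6$, and the second realizes $H^0(Y, T_Y)$ as the kernel of the map $\psi \colon \Lie \PSp_6 \to H^0(Y, L|_Y)$ sending $\xi \mapsto \xi \cdot s|_Y$, where $s \in H^0(X, L)$ is the section defining $Y$.

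The identification of $\ker \psi$ is the cleanest step: $\xi \cdot s$ vanishes on $Y$ if and only if $\xi \cdot s$ belongs to the one-dimensional subspace $H^0(X, L \otimes \mathcal{O}_X(-Y)) \simeq H^0(X, \mathcal{O}_X) = k$, spanned by $s$ itself. Thus $\ker \psi$ is the Lie algebra of the stabilizer $\Stab_{\PSp_6}(ks)$ of the line $ks$ inside the $\Sp_6$-module $H^0(X, L)$. By construction, $s$ is the $G$-invariant $3$-form defining the embedding $G \subset \Sp_6$, and by the maximality of $G$ in $\PSp_6$ invoked from Seitz in \Cref{lem: reduced aut}, this stabilizer has identity component exactly $G$. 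Hence $\dim \ker \psi = 14$, yielding $\dim H^0(Y, T_Y) = 14$.

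The main obstacle is the vanishing $H^i(X, T_X \otimes L^{-1}) = 0$ for $i = 0, 1$ on the Lagrangian Grassmannian $X = \Sp_6/P^\beta$ in characteristic two. I would approach it by filtering the $P^\beta$-module $(\mathfrak{sp}_6/\mathfrak{p}^\beta) \otimes k_{-\varpi_3}$ by a decreasing sequence of $P^\beta$-submodules with one-dimensional graded pieces, and applying Kempf's vanishing theorem (or a direct Borel--Weil--Bott analysis) to each resulting line bundle contribution; the positive-characteristic subtleties in the non-dominant weight regime are what make this step the most delicate part of the argument.
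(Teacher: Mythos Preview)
Your overall set-up matches the paper's: the same two short exact sequences, the same identification $H^0(Y,T_{X|Y})=\Lie\PSp_6$ via the vanishing of $H^i(X,T_X\otimes L^{-1})$ for $i=0,1$, and the same reduction to understanding the map $\psi$. The vanishing step is also handled the same way in the paper (filtration and Kempf), and in fact is less delicate than you fear: all the shifted weights turn out to be dominant, so Kempf applies directly for $H^1$, and $H^0$ vanishes by a direct weight check.

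The genuine gap is in your computation of $\ker\psi$. You correctly identify $\ker\psi$ with the Lie algebra of the scheme-theoretic stabilizer $\Stab_{\PSp_6}(ks)$, but then conclude $\dim\ker\psi=14$ from the claim that this stabilizer ``has identity component exactly $G$'', justified by Seitz maximality. This is circular. Seitz only tells you that $G$ is maximal among \emph{smooth} connected subgroups, so at best you get $(\Stab^0)_{\text{red}}=G$. In characteristic $2$ the stabilizer could a priori be non-reduced, with $\Lie\Stab$ strictly larger than $\Lie G$; ruling this out is precisely the content of the proposition. (There is also a minor mismatch: the Seitz citation in \Cref{lem: reduced aut} is for the exceptional pair $G_2\subset\SO_7$, not for $G_2\subset\PSp_6$.)

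The paper closes this gap differently: it first computes $H^0(Y,L|_Y)=V(\varpi_1)$, which is $7$-dimensional, using the finite purely inseparable cover $G/P^{\alpha_1}\to Y$. It then shows $\psi$ is surjective by a $G$-module argument: if $\psi$ were not surjective, the four-term exact sequence (which terminates because $H^1(Y,T_{X|Y})=0$) would force $\dim\ker\psi=20$, but $\Lie\Sp_6$ has no $20$-dimensional $G$-submodule. Hence $\dim H^0(Y,T_Y)=21-7=14$. To repair your argument you would need an independent reason why the stabilizer of $ks$ in $\PSp_6$ is smooth, and none is available short of the computation the paper actually does.
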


\begin{proof}
    We need the following exact sequences of sheaves
    \begin{align}
        \label{SES1}
        0 \longrightarrow T_X \otimes L^{-1} = T_X(-1) \longrightarrow T_X \longrightarrow T_{X \vert Y} \longrightarrow 0,
    \end{align}
    which comes from the fact that $\mathcal{O}_X(-Y) = L^{-1}$, as well as the normal bundle sequence
    \begin{align}
        \label{SES2}
        0 \longrightarrow T_Y \longrightarrow T_{X \vert Y} \longrightarrow L_{\vert Y} \longrightarrow 0.
    \end{align}

    First, let us check that $ H^0(X,T_X(-1)) =H^1(X,T_X(-1))=0.$ Once this is proven, we have by the long exact sequence associated to (\ref{SES1}) that
   \begin{align}
   \label{claim one}
   H^0(X,T_X) = H^0(Y,T_{X\vert Y}) = \Lie \Sp_6.
   \end{align}
   Here the main point is that $X$ is a flag variety with reduced stabilizer, so we can apply classical results such as Kempf vanishing. In particular, if a $P$-module $M$ admits a filtration by one‑dimensional subquotients of dominant weight, then its higher cohomology groups all vanish. In this setting, dominant is intended with respect to the opposite Borel subgroup $B^-$, as in \cite[II, 4]{Jan}. 
   Notice that $T_X(-1)$ is the homogeneous vector bundle corresponding to the $P^\beta$-module
   \[
    M = (\Lie G/\Lie P^\beta) \otimes k_{-\beta},
   \]
   where we denote by $k_\lambda$ the one-dimensional module with associated weight $\lambda$. Its weights are precisely the roots of $\Sp_6$ which are not in the Levi of $P^\beta$, each shifter by $-\beta$. A direct check shows each such shifted weight is still dominant. Thus we can conclude in particular that $H^1(T_X(-1) = 0$. Moreover, we can identify
   \[
   H^0(X,T_X(-1))= H^0(G \times_{P^\beta} (\Lie G /\Lie P^\beta) \otimes k_{-\beta}) = \mathrm{Hom}_{P^\beta}(k_{\beta}, \Lie G /\Lie P^\beta) = 0,
   \]
   because the weight $\varpi_1 = \varepsilon_1+\varepsilon_2+\varepsilon_3$ does not occur among the weights of $\Lie G / \Lie P^\beta$.

   \medskip


   Next, let us focus on the $G$-module $H^0(Y,L)$. For this, we consider the map
   \[
   g \colon Z = G/P^{\alpha_1} = G/Q_{\text{red}} \longrightarrow Y = G/Q,
   \]
   where $Z$ is a smooth quadric in $\proj^6$ and the map $g$ is finite, purely inseparable of degree four, and which corresponds to the quotient by the unipotent infinitesimal subgroup
   \[
   U_Q^- = R_u(P^{\alpha_1})^- \cap Q \simeq \boldsymbol{\alpha}_2\times \boldsymbol{\alpha}_2.
   \]
  The line bundle $L_{\vert Y}$ is induced by the character $\varpi_1$ on $Q_{\text{red}}= P^{\alpha_1}$; by pullback one identifies $g^\ast L = L_{\varpi_1}$ on $Z$, where we denote by $L_\lambda$ the line bundle corresponding to the weight $\lambda$. Then taking global sections we get
  \[
  H^0(Y,L_{\vert Y}) = H^0(Z,g^\ast L_{\vert Y})^{U^-_Q} = H^0(Z,L_{\varpi_1}) = V(\varpi_1)
  \]
  Next, we can use the fact that the last term is the $7$-dimensional representation of $G$ onto the pure octonions (this follows for example from the Weyl dimension formula). Thus we have proved that 
   \begin{align}
   \label{claim two}
   H^0(Y,L) = V(\varpi_1)
   \end{align}
   as $G$-module. 
   Finally, let us consider the first terms of the long exact sequence associated to (\ref{SES2}). By using (\ref{claim one}) and (\ref{claim two}), together with the fact that $X$ is infinitesimally rigid because it is a standard flag variety, we get 
   \[
   0 \longrightarrow H^0(Y,T_Y) \longrightarrow H^0(X,T_X) = \Lie \Sp_6 \stackrel{\psi}{\longrightarrow} H^0(Y,L) = V(\varpi_1) \stackrel{\sigma}{\longrightarrow} H^1(Y,T_Y) \longrightarrow 0,
   \]
   where all maps are $G$-equivariant. Let us assume that $H^1(Y,T_Y) \neq 0$ which is equivalent to the map $\sigma$ being nonzero. Since the only two nontrivial quotients of $V(\varpi_1)$ as a $G$-module are $V(\varpi_1)$ itself and the simple quotient $L(\varpi_1)$, we get that $\dim H^1(Y,T_Y) \geq 6$. Thus, $\ker \sigma = \mathrm{im} \psi$ has dimension at most one. Since the Lie algebra of $\Sp_6$ infinitesimally moves the hyperplane section $Y$ within its linear system, the map $\psi$ cannot be the zero map. This means that $\dim \ker \psi = \dim \Sp_6 - 1 = 20$. However, $\Lie \Sp_6$ does not contain any $G$-submodule of dimension $20$, so we get a contradiction. This allows us to conclude that $\sigma$ is the zero map, and thus
   \[
   h^0(Y,T_Y) = \dim H^0(X,T_X) - \dim H^0(Y,L) = \dim \Lie \Sp_6 - \dim V(\varpi_1) = 21-7 = 14,
   \]
   which concludes the proof.
    \end{proof}

\subsection{Picard rank two} 
It remains to consider the case of a homogeneous variety of Picard rank two under the group $G$ of type $G_2$. By \cite[Corollary 3.11]{Maccan2}, either $X = G/P$, where $P$ is obtained by fattening via Frobenius kernels and intersections (in which case the automorphism group is already described in \Cref{aut main}), or $X$ is one of the following:
\[
Y_m := G/(\mathcal{Q}_1 \cap {}_m G P^{\alpha_2}), \qquad Z_m := G/(\mathcal{Q}_2 \cap {}_m G P^{\alpha_2}).
\]
 The next statement establishes \Cref{G2 intro}, completing the analysis of the exotic parabolics and providing a full proof of \Cref{thm main geometric}.

\begin{corollary}
\label{G2 rank two}
With the above notation, for $m \geq 1$ the neutral component of the automorphism group of $Y_m$ is
\[
\underline{\Aut}^0_{Y_m} = {}_m(\PGL_6) \cdot G,
\]
while the automorphism group of $Z_m$ is isomorphic to $G$.
\end{corollary}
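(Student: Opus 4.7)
The plan is to parallel the arguments of \Cref{sec automorphism} in the exotic setting, where the pair $(G,\mathcal{Q}_1)$ plays the role of an exceptional pair with associated overgroup $\hat{G}=\PGL_6$ (via $G/\mathcal{Q}_1=\proj^5$), while the pair $(G,\mathcal{Q}_2)$ plays the role of a non-exceptional one by virtue of \Cref{rank one exotic}. Applied to the two contractions of Schubert curves on each variety, Blanchard's lemma then controls the connected automorphism group exactly as in \Cref{lem : 2 and Gm} and \Cref{lem : notexceptional} respectively.

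For $Y_m$, the contractions $f_1\colon Y_m\to\proj^5$ and $f_2\colon Y_m\to G/{}_mGP^{\alpha_2}$ have targets with connected automorphism groups $\PGL_6$ and $G^{(m)}$. The first step is an exotic analog of \Cref{gemme acts}: since $\proj^5$ is a single orbit under both $G$ and $\PGL_6$, the $p^m$-th infinitesimal neighbourhood of the base point $o\in\proj^5$ coincides whether computed via the $G$- or the $\PGL_6$-action, yielding a decomposition ${}_m(\PGL_6)\subset{}_m(R_u^-(\mathcal{Q}_1))\cdot{}_m\mathcal{P}$ with ${}_m(R_u^-(\mathcal{Q}_1))\subset{}_mG$ and $\mathcal{P}=\Stab_{\PGL_6}(o)$. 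Letting ${}_m(\PGL_6)$ act on the incidence embedding $Y_m\hookrightarrow\proj^5\times G/{}_mGP^{\alpha_2}$ via the first factor and trivially on the second, the ${}_mG$-component is absorbed into the stabilizer of the base point in the second factor, making the action well defined. The second step identifies $K:=\ker(f_2)_\ast$: an analog of \Cref{lem: reduced aut}, using the maximality of $G_2\hookrightarrow\PGL_6$ among semisimple subgroups (Seitz), gives $(\underline{\Aut}^0_{Y_m})_{\text{red}}=G$. Hence $K$ is finite connected infinitesimal, embeds into $\PGL_6$ via $(f_1)_\ast$, contains ${}_m(\PGL_6)$ by the first step, and satisfies $K\cap G={}_mG$; the quotient $K'=K/{}_m(\PGL_6)$ lives inside $\PGL_6^{(m)}$ with Lie algebra $\mathfrak{k}'$ a $G^{(m)}$-stable $p$-Lie subalgebra of $\mathfrak{pgl}_6^{(m)}$ satisfying $\mathfrak{k}'\cap\Lie G^{(m)}=0$.

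The main technical obstacle will be establishing, in analogy with \Cref{keylemmaG}, that such $\mathfrak{k}'$ is forced to vanish. This amounts to decomposing $\Lie\PGL_6/\Lie G$ as a $G_2$-module in characteristic two, using that $G_2$ embeds into $\PGL_6$ via the quotient of its seven-dimensional octonion representation by its one-dimensional radical; one then has to rule out, by bracket computations inside $\mathfrak{pgl}_6$, every nontrivial $G$-invariant Lie-subalgebra complement to $\Lie G$, and to discard the remaining central one-dimensional candidate $\mathfrak{z}(\mathfrak{pgl}_6^{(m)})$ by an incidence computation on $\proj^5\times G/{}_mGP^{\alpha_2}$ modelled on \Cref{no mu2 case2}. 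Once $\mathfrak{k}'=0$ is proven, $K'=1$ and hence $K={}_m(\PGL_6)$, giving $\underline{\Aut}^0_{Y_m}={}_m(\PGL_6)\cdot G$.

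For $Z_m$, the argument is easier and mirrors \Cref{lem : notexceptional}. The joint map $((f_1)_\ast,(f_2)_\ast)\colon\underline{\Aut}^0_{Z_m}\hookrightarrow G\times G^{(m)}$ is injective and contains the diagonal image of $G$ via $g\mapsto(g,F^m(g))$. The kernel $K:=\ker(f_2)_\ast$ embeds into $G$ via $(f_1)_\ast$, is $G$-invariant (hence normal in $G$), contains ${}_mG$, and is infinitesimal. Since the very special isogeny is not defined for $G_2$ in characteristic two, \Cref{minimale} forces $K={}_rG$ for some $r\geq m$. An element $(k,1)\in G\times G^{(m)}$ represents an automorphism of the incidence variety $Z_m$ precisely when $g^{-1}kg\in{}_mGP^{\alpha_2}\mathcal{Q}_2$ for all $g\in G$, which for the normal subgroup ${}_rG$ reduces to ${}_rG\subset{}_mGP^{\alpha_2}\mathcal{Q}_2$. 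Inspection of the root subgroup $U_{-(3\alpha_1+2\alpha_2)}$ associated to the highest root shows that ${}_mGP^{\alpha_2}\mathcal{Q}_2\cap U_{-(3\alpha_1+2\alpha_2)}={}_mU_{-(3\alpha_1+2\alpha_2)}$, since the highest root subgroup lies in neither $P^{\alpha_2}$ nor the infinitesimal part of $\mathcal{Q}_2$; hence $r\leq m$. Combined with $r\geq m$, this yields $r=m$ and $K={}_mG$, so $\underline{\Aut}^0_{Z_m}=G$ as claimed.
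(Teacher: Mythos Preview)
Your overall strategy matches the paper's exactly: treat $Z_m$ via \Cref{lem : notexceptional} using $\underline{\Aut}^0_{G/\mathcal{Q}_2}=G$, and treat $Y_m$ via \Cref{lem : 2 and Gm} with $\hat G=\PGL_6$. The reduced-part computation, the construction of the ${}_m\hat G$-action, the definition of $K'=K/{}_m\hat G$, and the reduction to showing $\mathfrak{k}'=0$ are all as in the paper.

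There is, however, a genuine slip in your plan for the step $\mathfrak{k}'=0$. You propose to dispose of a ``remaining central one-dimensional candidate $\mathfrak{z}(\mathfrak{pgl}_6^{(m)})$'' by an incidence computation modelled on \Cref{no mu2 case2}. But $\mathfrak{pgl}_6$ has \emph{trivial} center in characteristic two: if $A\in\mathfrak{gl}_6$ satisfies $gAg^{-1}\equiv A\pmod{kI}$ for all $g\in G$, then $A$ commutes with the simple $G$-module $W=L(\varpi_1)$, hence $A\in kI$ by Schur. So there is no $G$-fixed line in $\Lie\PGL_6$ at all, and the analogue of \Cref{no mu2 case2} is not needed. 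This observation is exactly what the paper exploits: it computes $\Lie\PGL_6/\Lie G\simeq S^2W$ with simple subquotients the trivial module, $W$ Frobenius-twisted, and $\Lambda^2 W$, and notes that any nonzero $G$-submodule of $S^2W$ contains the one-dimensional trivial piece; since this trivial piece does \emph{not} lift to a $G$-submodule of $\Lie\PGL_6$ (again Schur), one gets $\mathfrak{k}'=0$ directly. Your alternative route through bracket computations \`a la \Cref{keylemmaG} would also work, but is more laborious, and the incidence step you anticipate simply does not arise.

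For $Z_m$ your argument is correct but more elaborate than necessary. Once you know $K\hookrightarrow G$ via $(f_1)_*$ and that $(f_1)_*$ restricted to $G\subset\underline{\Aut}^0_{Z_m}$ is the identity, the retraction $(f_1)_*$ already forces $\underline{\Aut}^0_{Z_m}=G$ (this is the content of \Cref{lem : notexceptional}); the explicit bound $r\le m$ via the root subgroup $U_{-(3\alpha_1+2\alpha_2)}$ is not needed.
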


    \begin{proof}
For the variety $Z_m$, one can proceed as in the proof of \Cref{lem : notexceptional}, replacing the reduced parabolic subgroup $P_J$ with the non-reduced parabolic $\mathcal{Q}_2$. Then the proof follows similarly, using moreover that the automorphism group of $G/\mathcal{Q}_2$ is isomorphic to $G$, as shown in \Cref{rank one exotic}.

\smallskip

For the variety $Y_m$, the strategy is analogous to that in \Cref{lem : 2 and Gm}. Consider the following diagram of contractions of Schubert curves:
\begin{center}
    \begin{tikzcd}
        & Y_m \arrow[dl, "f = f_{\alpha_1}"] \arrow[rd, "h = f_{\alpha_2}"] \arrow[rr, hookrightarrow, "f \times h"]&& \proj^5 \times G/{}_mG P^{\alpha_2}\\
        \proj^5 && G/{}_mGP^{\alpha_2}& 
    \end{tikzcd}
\end{center}
By viewing $Y_m$ as an incidence variety, we see that the $m$-th Frobenius kernel of $\hat{G} := \PGL_6$ acts on it. Indeed, $Y_m$ parametrizes pairs $(l,E)$ where $l \subset W = L(\varpi_1)$ is a line, and $E \subset V = V(\varpi_1)$ is an isotropic subspace of dimension two on which the octonion norm vanishes, with the additional condition that the line $\widetilde{l} \subset V$ lifting $l$ satisfies $\widetilde{l} \subset E$. We define the action of ${}_m \PGL_6$ as the standard action on the first factor and the trivial action on the second; this preserves the incidence relation $F^m(\widetilde{l}) \subset E$. 

\smallskip

Next, consider the kernel $K := \ker h_\ast$ induced by the contraction $h = f_{\alpha_2}$, which by construction contains ${}_m \hat{G}$, and define 
\[
K^\prime := K / {}_m \hat{G}.
\] 
Since $\Lie K^\prime \cap \Lie G^{(m)} = 0,$ we identify 
\[
\Lie K^\prime =: \mathfrak{k} \subset \Lie \PGL_6 / \Lie G.
\]
Recall that our embedding $G \subset \PSp_6 \subset \PGL_6 = \PGL(W)$ arises from the quotient
\[
W = V / k \cdot e,
\]
where \(V\) is the 7-dimensional space of traceless octonions and
\[
e = e_{11} + e_{22}
\]
is the unit element with respect to the octonion product. The Lie algebra \(\operatorname{Lie} \PGL_6\) identifies with the traceless endomorphisms of \(W\). A direct computation shows that as $G$-modules,
\[
\Lie \PSp_6 / \Lie G \simeq V^{(2)} \subset \Lie \PGL_6 / \Lie G \simeq S^2 W.
\]
In particular, the simple subquotients of \(\Lie \PGL_6 / \Lie G\) are the one-dimensional trivial submodule of \(V = V(\varpi_1)\) twisted once by the Frobenius, the simple module \(W\), also twisted by Frobenius, and finally \(\Lambda^2 W\). 

\smallskip

Now, if \(\Lie K^\prime \neq 0\), then \(\Lie K^\prime\) is a nonzero $G$-submodule of \(S^2 W\) that, by the condition \(\Lie K^\prime \cap \Lie G^{(m)} = 0\), must contain a one-dimensional trivial \(G\)-submodule. However, such a trivial submodule does not lift to a submodule inside \(\Lie \PGL_6\), yielding a contradiction. Hence, \(\mathfrak{k} = 0\), so \(K^\prime\) is trivial, and therefore \(K = {}_m \hat{G}\), concluding the proof.

    \end{proof}

 
\bibliographystyle{alpha} 
	\bibliography{biblio} 	
 
\end{document}